\documentclass[12pt,a4paper]{article}
\usepackage{amssymb,enumerate,verbatim}
\usepackage{amsgen,amsmath,amsthm,amstext,amsbsy,amsopn,amsfonts,amssymb}
\usepackage{cite}
\usepackage[all,cmtip]{xy}

\textwidth=6in \textheight=9in
\newtheorem{Def}{Definition}
\newtheorem{Thm}{Theorem}
\newtheorem*{Thm*}{Theorem}
\newtheorem{Prop}{Proposition}
\newtheorem*{Prop*}{Proposition}
\newtheorem{Lem}{Lemma}
\newtheorem{Cor}{Corollary}

\newtheorem{Rem}{Remark}
\newcommand\Z{\mathbb{Z}}
\newcommand\C{\mathbb{C}}

\newcommand\R{\mathbb{R}}

\newcommand\F{\mathbb{F}}
\newcommand\N{\mathbb{N}}

\newcommand\oo{\omega}
\newcommand\VV{\mathcal{V}}
\newcommand\NN{\mathcal{N}}
\newcommand\WW{\mathcal{W}}
\newcommand\TT{\mathcal{T}}

\newcommand\DD{\mathcal{D}}

\newcommand\XX{\mathcal{X}}
\newcommand\SSS{\mathcal{S}}
\newcommand\YY{\mathcal{Y}}

\newcommand\FF{\mathcal{F}}
\newcommand\UU{\mathcal{U}}
\newcommand\BB{\mathcal{B}}
\newcommand\HH{\mathcal{H}}

\newcommand\e{\varepsilon}

\title{On non-amenable embeddable spaces in relation with free products}
\author{K\'evin BOUCHER}

\begin{document}
\maketitle

\begin{abstract}
In this paper we give sufficient conditions for a free product of residually finite groups to admit an embeddable box space. This generalizes the constructions of Arzhantseva, Guentner and Spakula in \cite{AGS} and gives a new class of non-amenable metric spaces with bounded geometry which coarsely embeds into Hilbert space.\\
\end{abstract}

\section{Introduction}

The concept of coarse embedding of a metric space into another was inspired by Gromov's work \cite{G} in relation with the Novikov conjecture on the homotopy invariance of higher signatures for closed manifolds :
\begin{Def}\label{def:def1}
Let $(X,d_X)$ and $(Y,d_Y)$ be two metric spaces. We say that $X$ coarsely embeds into $Y$ if, there exist a map $F:X\rightarrow Y$ and two proper functions $\rho_\pm:\mathbb{R}_+\rightarrow\mathbb{R}_+$ such that:
\[\rho_-(d_X(x,y))\le d_Y(F(x),F(x'))\le\rho_+(d_X(x,x'))\]
for all $x, x'\in X$.
\end{Def}
This tells us that two points are far from each other in $X$ if and only if they are far in $F(X)$.

This geometric notion was related by Yu \cite{Yu} to the coarse Baum-Connes conjecture and in the case of Cayley graphs of groups  to the Novikov conjecture.\\ 
In that remarkable paper the author introduced a notion of amenability for metric spaces, called property (A), which implies coarse embedding into Hilbert spaces:
\begin{Def}\label{def:def2}
A discrete metric space $(X,d)$ with bounded geometry has property (A) if and only if for every $R>0$ and $\varepsilon>0$, there exist $S>0$ and a function $\phi:X\rightarrow \ell^2(X)$ with $\|\phi(x)\|=1$ for any $x\in X$ such that:
\begin{description}
\item{$(1)$}  for any $x_1,x_2\in X$ with $d(x_1,x_2)\le R$, $|1-\langle\phi(x_1),\phi(x_2)\rangle|\le\varepsilon$, and
\item{$(2)$}  for any $x\in X$, $\{\phi(x)\neq0\}\subset B_S(x)$.
\end{description}
\end{Def}
Note that this definition of property (A) is not the original but an equivalent one for metric spaces with bounded geometry.\\
Definition \ref{def:def2} can be compared to the following notion of amenability for discrete countable groups:
\begin{Def}\label{def:def3}
A discrete group $G$ with a proper length function $\ell$ is amenable if and only if for every $R>0$ and $\varepsilon>0$, there exists a function $\xi\in\ell^2(G)$ with $\|\xi\|=1$ such that  for all $g\in B(e,R)$, $\|\lambda(g)\xi-\xi\|\le\varepsilon$ where $\lambda: G\rightarrow \ell^2(G)$ is the left regular representation.
\end{Def}
Indeed, for given $\e>0$ and $R>0$, the map $\phi(g)=\lambda(g)\xi$ verifies the conditions of Definition \ref{def:def2} and so property (A) can be thought as a non-equivariant notion of amenability.\\
Actually property (A) can be characterized by the amenability of a certain groupo\"id associated to the given metric space, called translation groupo\"id (cf. \cite{STY}).

Numbers of coarse properties imply property (A) and make it, in some situations, easier to check than coarse embeddability.
For example: any metric space of finite asymptotic dimension \cite{DG}, as hyperbolic spaces \cite{Roe2}, has property (A).

In the same way, coarse embeddability of a metric space into a Hilbert space can also be identified to a weaker notion of amenability, called a-T-menability.
Indeed, a metric space coarsely embeds into Hilbert space if and only if its translation groupo\"id is a-T-menable \cite{STY}.
Let us recall the definition of a-T-menability in the group setting for the sake of simplicity:
\begin{Def}\label{def:def4}
A discrete countable group $G$ is a-T-menable if it acts properly on a Hilbert space by isometries.
\end{Def}

In group theory it is well known that the free group of rank 2 is an example of finitely generated non-amenable but a-T-menable group. 
In the case of metric spaces with bounded geometry, find a metric space without (A) but still embeddable into a Hilbert space was a crucial question answered in \cite{AGS}. 
In that paper the authors exhibited a \textbf{box space} (cf. below) associated to the rank 2 free group which embeds and use a girth criterion to justify that it does not have property (A) \cite{WY}.\\

Before we introduce box spaces we must talk about coarse disjoint union of metric spaces.\\
Let $\N$ be the set of integers that we endow with a \textit{far away} distance $d_{\text{f.a}}$ given by the formula $d_{\text{f.a}}(n,m)=|2^n-2^m|$.
The important aspect of $d_{\text{f.a}}$ is that in the neighborhoods of the infinity the points of $\N$ are arbitrarily far away from each other.
Indeed $d_{\text{f.a}}(n,m)\ge2^{\max(n,m)-1}$ for all $n,m\in\N$ with $n\neq m$ and so $d_{\text{f.a}}(n,m)\to\infty$ when $n+m\to\infty$ with $n\neq m$.

\begin{Def}\label{def:coarse}
Let $((X_n, d_n))_n$ be a sequence of metric spaces.\\
The coarse disjoint union $(\sqcup_n X_n, d)$ is the unique metric space $(X,d)$ up to coarse equivalence such that:
\begin{description}
\item{$(1)$}  there exist a map $p:X\to\N$ and a proper function $\rho:\R_+\to\R_+$ such that $p^{-1}(n)$ is isometric to $X_n$ for all $n$  and $d_{\text{f.a}}(p(x),p(y))\le \rho(d(x,y))$ for all $x,y\in X$.
\item{$(2)$}  For all $R>0$, $U_{n,m}=\{x\in X_n\mid d(x,X_m)\le R\}$ is a bounded set of $X_n$ for all $n,m$.
\end{description} 
\end{Def}

As it is explained in Section \ref{sec:sec3} there is a correspondence between sequence of metric spaces with their arrows (cf. Definition \ref{def:def8}) and coarse disjoint union $(X,d,p,\rho)$ with classical coarse arrows that commute with the map $p$.\\

Let $G$ be a discrete residually finite group endowed with a proper length $\ell$ and $\NN=(N_n)_n$ a sequence of finite index normal subgroups with trivial intersection.
\begin{Def}\label{def:def5}
The box space of $G$ with respect to $\NN$, denoted by $\Box_\NN G$, is the coarse disjoint union $\sqcup_n G/{N_n}$, where each finite quotient $Q/{N_n}$ is endowed with the quotient length induced by $\ell$ (cf. Section \ref{sec:sec3}).
\end{Def}
Box spaces are defined up to coarse equivalence and by Proposition \ref{prop:prop4} and Corollary \ref{cor:cor1} they do not depend on the choice of $\ell$.\\

One of the key points in box space theory is that geometric properties on a group box space translate to analytic/harmonic properties on this group:
\begin{align}
G \text{ is amenable } &\Longleftrightarrow \Box_{\NN}G \text{ has property A,}\\
G \text{ is a-T-menable } &\Longleftarrow \Box_{\NN}G \text{ is coarsely embeddable into a Hilbert space,}\\
G \text{ has property (T) } &\Longrightarrow \Box_{\NN}G \text{ is an expander.}
\end{align}
The Proofs of the first two facts can be found in \cite{Roe} and the third in \cite{Mar}.\\
Recall that property (T) is a strong negation of a-T-menability in the following sense:
\begin{Def}\label{def:def6}
A discret countable group $G$ has property (T) if any isometric action of $G$ on a Hilbert space has a fixed point.
\end{Def}
Analogously to the relation between property (T) and a-T-menability, the notion of expansion, mentioned in the correspondance $(3)$ and explained in Section \ref{sec:sec5}, is an obstruction to coarse embeddability of a given metric space into Hilbert space (cf. \cite{Lu} and \cite{Mar} for more about expansion).\\
Let us mention that in some sense the converse of the correspondence $(2)$ exists.\\
A generalized notion of coarse embedding called fibered coarse embedding was introduced in \cite{CWW}.
Since certain expander sequences can admit such kind of embedding, it appears to be strictly weaker than the original notion of coarse embedding.\\
The strength of this generalization is that it can be associated to any geometric action of a residually finite group on a metric space. 
Moreover, in the context of $L^p$ spaces, there is an equivalence between the existence of a proper action on a $L^p$ space and existence of a fibered coarse embedding of its box space into it\cite{Arn}.\\

In this paper we investigate finite quotients of free products.\\
Let's consider $A$ and $B$ two discrete residually finite groups that we suppose to be finitely generated.\\
Let $G=A\star B$ be their free product (which is residually finite cf. \cite{BT}) and $\ell$ a proper length on $G$.\\
The main result of this paper is:
\begin{Thm}\label{thm:thm1}
If $A$ and $B$ are amenable, then $G$ admits a box space which coarsely embeds into Hilbert space.
\end{Thm}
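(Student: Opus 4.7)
The plan is to generalize the Arzhantseva--Guentner--Spakula construction by exploiting the Bass--Serre tree $T$ of $G = A \star B$, on which $G$ acts with vertex stabilizers conjugate to $A$ or $B$ and trivial edge stabilizers. The first step is to construct a sequence $\NN = (N_n)$ of finite-index normal subgroups of $G$ with $\bigcap_n N_n = \{e\}$ satisfying two additional conditions: (i) the finite quotient graphs $T_n := N_n \backslash T$ have girth $g_n \to \infty$, and (ii) the subgroups $N_n \cap A$ and $N_n \cap B$ form separating sequences of finite-index normal subgroups in $A$ and $B$, giving rise to box spaces $\Box A$ and $\Box B$. Existence follows from residual finiteness of the free product (cf.\ \cite{BT}) together with a Bass--Serre-theoretic construction pulling back large-girth covers of a suitable finite graph, while remaining compatible with prescribed cofinal residual sequences in $A$ and $B$.

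Since $A$ and $B$ are amenable, their box spaces have property (A) and hence coarsely embed into Hilbert space, providing uniformly coarse embeddings of the finite quotients $A/(A \cap N_n)$ and $B/(B \cap N_n)$ into $\ell^2$. These will serve as the local building blocks on the vertex cosets of $G/N_n$. Indeed, each quotient $G/N_n$ decomposes along the finite graph of groups $T_n$: preimages of vertices of $T_n$ are cosets isometric (up to the chosen generating set) to $A/(A \cap N_n)$ or $B/(B \cap N_n)$, while edges of $T_n$ encode the transitions between the two factors.

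The next step is to build a tree-like component by pushing the Cayley structure of $G/N_n$ down to $T_n$. At scales below $g_n/2$, balls in $T_n$ are genuine trees, so the graph distance yields a standard conditionally negative definite kernel and hence a local tree embedding $\phi_n^T$ of $G/N_n$. The total embedding then takes the form
$$ F_n(g) \;:=\; \phi_n^T(g) \;\oplus\; \phi_n^{v(g)}(g), $$
where $v(g)$ denotes the vertex of $T_n$ containing $g$, and the vertex contributions live in a direct sum $\bigoplus_v \HH_v$ indexed by vertices of $T_n$, each $\HH_v$ carrying the amenability embedding of the corresponding factor quotient.

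The main obstacle will be uniformity in $n$: one must exhibit coarse control functions $\rho_\pm$ independent of $n$. The upper bound $\rho_+$ will follow from Lipschitz-ness of the projection $G/N_n \to T_n$ combined with the uniform upper bound on the property (A) embeddings. The lower bound $\rho_-$ is the delicate point: small displacements that remain inside a vertex coset are detected by the amenability embedding, displacements that cross several edges but stay below the girth scale are detected by the tree component (because the ambient tree structure is faithfully reproduced in $T_n$ at that scale), and the intermediate transition regime forces one to balance the girth growth against the Reiter-type quantitatives from amenability. I expect this patching at the transition scale --- gluing the local amenability data to the global tree data --- to be the technical core of the argument, analogous to but more involved than the corresponding step in \cite{AGS} where the vertex groups were trivial to handle.
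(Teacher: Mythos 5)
There is a genuine gap, in fact two. First, the existence of the sequence $(N_n)_n$ with all the properties you require --- finite index, trivial intersection, girth of $N_n\backslash T$ tending to infinity, and compatibility with prescribed residual chains in $A$ and $B$ --- is asserted ("follows from residual finiteness \dots together with a Bass--Serre-theoretic construction pulling back large-girth covers") rather than carried out, and this is where a substantial part of the real work lies. The difficulty is that the kernel of $G\to A\times B$ is a free group of \emph{infinite} rank as soon as $A$ or $B$ is infinite, so one cannot directly quote the AGS large-girth construction; the paper has to pass first to finite quotients $\overline{A}_n\star\overline{B}_n$, then take characteristic finite-index subgroups of the resulting \emph{finite-rank} free kernels, prove faithfulness (Proposition \ref{prop:prop8bis}), and prove that the girth grows \emph{uniformly in the rank} (Proposition \ref{prop:prop10}, via Levi's theorem and Proposition 3.2 of \cite{LS}). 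Your one-sentence existence claim hides exactly this construction.

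Second, the embedding step as written does not work, and the part you defer as "the technical core" is the proof, not a technicality. The naive direct sum $F_n(g)=\phi_n^T(g)\oplus\phi_n^{v(g)}(g)$ already fails the \emph{upper} bound: the local embeddings of $A/(A\cap N_n)$ are unbounded maps (the quotients have growing diameter), and two points in adjacent vertex cosets have their vertex contributions in orthogonal summands $\HH_{v(g)}\perp\HH_{v(g')}$, so $\|F_n(g)-F_n(g')\|$ is not controlled by $d(g,g')$; one must replace raw coarse embeddings by normalized kernels glued with partitions of unity (this is what Lemma \ref{lem:lem1}(3), Proposition \ref{prop:prop13} and Corollary \ref{cor:cor5} do in the paper). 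Likewise, "balls in $T_n$ are trees, so the graph distance yields a conditionally negative definite kernel" is not correct globally --- the graph metric of a finite graph with cycles is not negative type, and a collection of local tree embeddings does not assemble into a map with a uniform lower bound; AGS need the global wall pseudo-metric agreeing with the graph metric below the girth for precisely this reason. Finally, your claim that the vertex-coset preimages are "isometric to $A/(A\cap N_n)$" needs proof: the metric induced from $G/N_n$ is $\min_{h\in N_n}\ell(a^{-1}a'h)$ and can be strictly smaller than the box-space metric $\min_{h\in A\cap N_n}\ell_A(a^{-1}a'h)$, since geodesics may leave the coset and exploit relations in $N_n$; this distortion phenomenon is exactly what forces the paper to spend Subsection \ref{sub:sub42} comparing the induced and combinatorial metrics on the free fibers (they are \emph{not} coarsely equivalent, only comparable "at infinity" through a direct-limit argument, Proposition \ref{prop:prop8}). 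Note also that the paper sidesteps the vertex-coset issue altogether by fibering over the amenable base $\overline{A}_n\times\overline{B}_n$ (Theorem \ref{thm:thm3}), using amenability of $A\times B$ rather than property (A) of box spaces of the factors, so that the only distortion problem to solve is the one for the free fiber. Until you supply (i) the construction of the $N_n$'s with uniform girth growth and (ii) the gluing and distortion arguments, the proposal is a plan, not a proof.
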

In other words, as we use this terminology in the rest, $G$ has a faithful sequence of finite quotients which coarsely embeds into Hilbert space (cf. Lemma \ref{lem:lem1}).
\begin{Def}
Let $G$ be a residually finite group.
A infinite sequence of (right) quotient $(Q_n)_n$ of $G$ is called faithful if any element of $G$ acts non-trivially by left translation on all but a finite number of these quotients.
\end{Def}
Since $G$ is non-amenable when $|A|,|B|\ge3$, Theorem \ref{thm:thm1} and the box space correspondence $(1)$ described before gives us a new class of non-amenable metric spaces with bounded geometry which coarsely embeds into Hilbert space.\\
Moreover, correspondence $(2)$ combined with our result give another proof with a different flavor of the well known result: 
\begin{Thm*}
Let $A$ and $B$ be two discrete countable amenable groups.
Then their free product $G=A\star B$ is a-T-menable \cite{CCJJV}.
\end{Thm*}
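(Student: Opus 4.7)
The plan is to invoke Theorem \ref{thm:thm1} and then extract a-T-menability of $G = A \star B$ via the box-space correspondence $(2)$, after reducing to a setting where Theorem \ref{thm:thm1} applies. First, reduce to finitely generated factors by exhausting $A$ and $B$ by finitely generated subgroups $A_n \nearrow A$, $B_n \nearrow B$, so that $A \star B = \bigcup_n (A_n \star B_n)$. Since a-T-menability of a countable group is equivalent to a-T-menability of each of its finitely generated subgroups (Schoenberg's theorem together with a diagonal patching of proper conditionally negative definite functions along an exhaustion), it suffices to establish the statement when $A$ and $B$ are finitely generated amenable.

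Second, arrange residual finiteness. Theorem \ref{thm:thm1} requires $A$ and $B$ to be residually finite, but not every finitely generated amenable group has this property (e.g.\ $\mathrm{BS}(1,n)$). The natural route is to embed $A$ and $B$ into finitely generated residually finite amenable groups $\tilde A$, $\tilde B$, producing an inclusion $A \star B \hookrightarrow \tilde A \star \tilde B$; a-T-menability then descends to the subgroup.

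Finally, once $\tilde A$, $\tilde B$ are finitely generated residually finite amenable, Theorem \ref{thm:thm1} produces a faithful sequence $\NN = (N_n)$ of finite index normal subgroups of $\tilde A \star \tilde B$ such that $\Box_\NN(\tilde A \star \tilde B)$ coarsely embeds into Hilbert space. Correspondence $(2)$ recalled in the introduction then gives a-T-menability of $\tilde A \star \tilde B$, and hence of its subgroup $A \star B$. The resulting proof has a ``geometric'' flavor: properness of the affine action is encoded in the coarse structure of a sequence of finite quotients, in contrast with the analytic/combinatorial CCJJV construction.

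The main obstacle is the second step, since a general finitely generated amenable group may fail to embed into a residually finite amenable one. A fallback, should such an embedding lemma be unavailable in full generality, is to re-inspect the proof of Theorem \ref{thm:thm1} and directly manufacture, from F\o lner sequences in $A$ and $B$ together with the Bass--Serre tree action of $A \star B$, a proper affine Hilbert action of $A \star B$ — thereby bypassing residual finiteness altogether while still drawing on the same free-product geometry that underlies the box-space construction.
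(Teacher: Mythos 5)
Your core derivation is exactly the paper's: Theorem \ref{thm:thm1} supplies an embeddable box space of $A\star B$ when $A$ and $B$ are finitely generated, residually finite and amenable, and correspondence $(2)$ (proved in \cite{Roe}) converts coarse embeddability of a box space into a-T-menability of the group. The paper says no more than this, so on that restricted class of factors your proposal and the paper coincide. Your first reduction (exhausting $A$ and $B$ by finitely generated subgroups, so that $A\star B=\bigcup_n(A_n\star B_n)$, and using that a-T-menability passes to increasing unions of countable discrete groups) is also correct and standard.

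The genuine gap is your second step, and it is worse than ``an embedding lemma may be unavailable'': it is impossible in general. Any subgroup of a residually finite group is itself residually finite, so a finitely generated amenable group that is not residually finite --- such groups exist, e.g. the finitely generated infinite simple amenable groups of Juschenko and Monod, and an infinite simple group is never residually finite --- cannot be embedded into \emph{any} residually finite group, amenable or otherwise. (Your illustrative example is also off: $\mathrm{BS}(1,n)\simeq\Z[1/n]\rtimes\Z$ is residually finite; the non-residually-finite Baumslag--Solitar groups contain free subgroups and are not amenable.) Your fallback --- rerunning the argument with F\o lner sets and the Bass--Serre action in place of finite quotients --- is a plausible idea but is only the announcement of a different proof, not a proof. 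As written, then, the proposal establishes the theorem exactly for finitely generated residually finite factors, which is in fact the only scope in which the paper's own remark is justified; the fully general statement still requires the construction of \cite{CCJJV} or some argument genuinely independent of residual finiteness.
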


We are also interested by what can be done beyond the free product of amenable groups case.\\
Given a free product $G=A\star B$, a necessary condition for it to admit an embeddable box space is that its factors $A$ and $B$ admit such box spaces (cf Proposition \ref{prop:prop6}). 
So a natural question is: does the free product of discrete residually finite groups which have an embeddable box space have itself such an embeddable box space?\\
It is not clear if our method can be adapted to treat this situation.
As it is explained in the Outline, a part of our approach uses an \textit{amenable extension} type result adapted to spaces obtained as coarse disjoint union.
But even in the context of classical metric spaces we cannot expect, in full generality, that an \textit{extension of embeddable spaces} to be itself embeddable (cf. \cite{AzTe}).\\
However we have the following partial answer:\\
Let $A$, $B$ and $G$ as above. 
We assume that $A$ and $B$ are both finitely generated and we fix a finite generating set $S\subset G$ of $G$. 
\begin{Prop}\label{prop:prop1}
If $A$ and $B$ have an embeddable box space then  $G$ has a faithful sequence of irreducible symmetric representations $(\sigma_n: G\rightarrow Sym(X_n))_n$ such that the coarse disjoint union of Schreier graphs $\sqcup_nSch_S(X_n)$ coarsely embeds into Hilbert space.
\end{Prop}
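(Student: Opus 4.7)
The plan is to realize each $\sigma_n$ as left translation on a coset space of the auxiliary group $Q_n := A/M_n \star B/N_n$, where $\MM = (M_n)_n \triangleleft A$ and $\NN = (N_n)_n \triangleleft B$ are filtrations witnessing the hypothesised coarse embeddings of $\Box_\MM A$ and $\Box_\NN B$ into Hilbert space. Each $Q_n$ is a free product of two finite groups, hence virtually free (its Bass--Serre tree $T_n$ has vertex stabilisers conjugate to $A/M_n$ or $B/N_n$ and trivial edge stabilisers), hence residually finite. Using this residual finiteness and a diagonal argument, I select a finite-index normal subgroup $H_n \triangleleft Q_n$ which is (i) torsion-free (so that $H_n$ acts freely on $T_n$; produced as the normal core of a finite-index torsion-free subgroup via Karrass--Pietrowski--Solitar), (ii) avoids the image in $Q_n$ of the $n$-th element of a fixed enumeration of $G \setminus \{e\}$, and (iii) meets the $Q_n$-ball of radius $L_n$ trivially for some $L_n \to \infty$. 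Set $X_n := Q_n/H_n$ and define $\sigma_n : G \twoheadrightarrow Q_n \to Sym(X_n)$ by left translation. Then $\sigma_n$ is transitive (irreducible) by construction, $(\sigma_n)_n$ is faithful by (ii), and the finite quotient graph $\Gamma_n := H_n \backslash T_n$ has girth $g_n \to \infty$ by (iii).

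Without loss of generality, $S = S_A \cup S_B$ with $S_A, S_B$ symmetric generating sets of $A$ and $B$ (changing generating set only introduces a bi-Lipschitz equivalence). Property (i) makes every $(A/M_n)$-stabiliser on $X_n$ trivial, so each $A$-orbit of $X_n$ is $A$-equivariantly isomorphic to $A/M_n$; symmetrically for $B$. Therefore the $S_A$-subgraph of $Sch_S(X_n)$ is a disjoint union of copies of $Sch_{S_A}(A/M_n)$, one per $A$-orbit, and similarly for $S_B$; these ``blocks'' are identified along single vertices according to the bipartite incidence graph of $A$- and $B$-orbits on $X_n$, which is precisely $\Gamma_n$. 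Thus $Sch_S(X_n)$ is a tree-of-blocks at scales smaller than $g_n/2$.

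To produce a coarse embedding $\sqcup_n Sch_S(X_n) \to \HH$, I would use a direct-sum construction that records, for each point $x$, its position within its $A$-block and its $B$-block (via the uniform embeddings of $\Box_\MM A$ and $\Box_\NN B$), together with the position of these blocks in a tree-embedding of $\Gamma_n$. Since pairs of vertices at graph distance below $g_n/2$ are joined by a unique chain of blocks, the Schreier-graph distance splits as a tree distance in $\Gamma_n$ plus controlled in-block distances, and the uniform compression/expansion bounds of the input embeddings pass to the combined map. The main technical obstacle is to make this work \emph{uniformly} across $n$ and at all scales: pairs at distance beyond $g_n/2$ must traverse cycles of $\Gamma_n$ (forced by the positive rank of the free group $H_n$) and the tree decomposition of distance breaks down. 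Overcoming this requires an ``amenable extension of coarse embeddings'' type argument in the coarse disjoint-union category, analogous to the tool developed for Theorem~\ref{thm:thm1} but with the assumed embeddings of $\Box_\MM A$ and $\Box_\NN B$ playing the role that amenability of $A$ and $B$ plays there.
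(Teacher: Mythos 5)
There is a genuine gap, and it sits exactly where you flag it: the last step of your argument is not a technical detail but the very point the paper cannot (and does not) prove. By choosing $H_n$ \emph{normal} and of finite index in $Q_n=A/M_n\star B/N_n$, your $X_n=Q_n/H_n$ are honest finite quotients of $G$, so you are in fact trying to build an embeddable \emph{box space} of $G$ from embeddable box spaces of $A$ and $B$ --- precisely the question the introduction leaves open. The obstruction is that beyond the girth scale of $\Gamma_n=H_n\backslash T_n$ the quotient $X_n$ is an extension of $A/M_n\times B/N_n$ by the finite large-girth group $D'/H_n$, and to glue the block embeddings across this extension you would need an ``extension of embeddable by embeddable is embeddable'' principle in the coarse disjoint-union category. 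No such principle holds in general (cf.\ \cite{AzTe}), and the tool used for Theorem~\ref{thm:thm1} is genuinely weaker than what you invoke: it is Corollary~\ref{cor:cor5}, which requires the base sequence to be \emph{equi-exact} (supplied there by amenability of $A\times B$ via Lemma~\ref{prop:prop11}), not merely uniformly embeddable. Substituting the assumed embeddings of $\Box_\MM A$ and $\Box_\NN B$ for amenability, as you propose, is exactly the step that fails. (Your faithfulness bookkeeping is also slightly off --- avoiding only the $n$-th element of an enumeration at stage $n$ gives nontriviality on one quotient, not on all but finitely many --- but this is repairable from condition (iii) plus faithfulness of $(A/M_n)_n$, $(B/N_n)_n$, and is not the real issue.)

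The paper's proof avoids the problem by \emph{not} passing to finite quotients of $Q_n$ at all. It keeps the infinite virtually free groups $G_n=\overline{A}_n\star\overline{B}_n$ and takes as symmetric representations the Baumslag truncated-word actions \cite{BT}: $G$ acts through $G_n$ on the finite sets $U_n(k)$ of words of length at most $k$, the stabilizer of the base point is a finite-index but non-normal subgroup, and the resulting Schreier graphs embed \emph{isometrically} into the Cayley graphs $(G_n,\overline{S})$. The whole weight then falls on Theorem~\ref{thm:thm6}: $\sqcup_n G_n$ embeds whenever $\sqcup_n\overline{A}_n$ and $\sqcup_n\overline{B}_n$ do. That theorem is proved by writing each $G_n$ as a tree of metric spaces over its Bass--Serre tree $T_n$, using that bounded-geometry trees form an equi-exact family (Proposition~\ref{prop:prop18}) so that Corollary~\ref{cor:cor5} applies with base $T_n$, and then running an induction on tree-balls with the separated-union gluing of Proposition~\ref{prop:prop19}. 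Because the base is a tree rather than a finite quotient graph, there is no girth threshold and no cycles to traverse at any scale, which is exactly why the statement can be obtained for symmetric representations but not for box spaces. If you want to salvage your write-up, replace the quotients $Q_n/H_n$ by the $U_n(k)$ (or any family of subsets of $G_n$ that is isometrically embedded and exhausts $G_n$), and replace your final extension step by a proof of Theorem~\ref{thm:thm6} along the tree-of-spaces lines above.
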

As explained in Section \ref{sec:sec5} such symmetric representations are nothing but quotients induced by non-necessary normal subgroups.\\
Even if sequences of symmetric representations still preserve the direct arrow of correspondence $(1)$ and the implication $(3)$.
We have to keep in Definition \ref{def:def5} of a box space the assumption that the sequence of subgroups we consider are normal to preserve some part of the correspondences described before.
Indeed the following result, proven in Section \ref{sec:sec5}, shows that considering symmetric representations instead of finite quotients breaks the correspondence $(1)$:
\begin{Prop}\label{prop:prop2}
Let $\F_2$ be the free group of rank 2. There exists a faithful sequence of symmetric representations $(\sigma_n: \F_2\rightarrow Sym(X_n))_n$ of $\F_2$ such that the sequence of Schreier graphs $\sqcup_nSch(X_n)$ has property (A).
\end{Prop}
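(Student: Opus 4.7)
The plan is to construct a faithful sequence of symmetric representations $(\sigma_n\colon\F_2\to\mathrm{Sym}(X_n))_n$ whose Schreier graphs are uniformly \emph{cycle-like}, i.e., each is essentially a cycle with a small local perturbation. The key flexibility exploited is that symmetric representations need not arise from normal subgroups, so the Schreier graphs need not be vertex-transitive; this is essential, since by correspondence $(1)$ the box space of the non-amenable group $\F_2$ cannot have property~(A), so one cannot hope to proceed through normal quotients alone.

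Take $X_n=\Z/n\Z$ and define $\sigma_n(a),\sigma_n(b)$ to coincide with the cyclic shift $i\mapsto i+1\bmod n$ outside a small perturbation window $W_n\subset X_n$ of size $k_n=o(n)$ (for instance $k_n=\lfloor\log n\rfloor$); on $W_n$ the generators encode a chosen finite quotient $Q_n=\F_2/N_n$. The normal subgroups $N_n$ are chosen using residual finiteness and the existence of finite quotients of arbitrarily large girth, so that $\bigcap_n N_n=\{e\}$ and the girth of $N_n$ tends to infinity with $n$. A small amount of edge-swapping at the boundary of $W_n$ ensures that $\sigma_n(a),\sigma_n(b)$ are genuine permutations of $X_n$. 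Faithfulness then follows by a dichotomy: any element with non-trivial abelianization is detected outside $W_n$ as a shift by a nonzero amount modulo $n$, while any element of $[\F_2,\F_2]$ is detected by the $Q_n$-encoding inside $W_n$.

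The main step is the verification of uniform property~(A) for $\sqcup_n\mathrm{Sch}(X_n)$. Given $R,\e>0$, choose $S=S(R,\e)$ and define $\phi_n\colon X_n\to\ell^2(X_n)$ supported in balls of radius $S$: for $x$ at distance $>S$ from $W_n$, take $\phi_n(x)$ as the normalized arc indicator $|I_S|^{-1/2}\mathbf{1}_{I_S(x)}$, for which the inner product estimate is the standard one on cycles; for $x$ within distance $S$ of $W_n$, since the girth of $N_n$ eventually exceeds $2S$, the local Schreier structure at scale $S$ is a subgraph of a $4$-regular tree, and one applies the standard property~(A) construction for trees based on geodesic projections.

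The main obstacle is precisely this verification near $W_n$: the family $\{\mathrm{Cay}(Q_n)\}_n$ is itself a box space of $\F_2$, so it cannot satisfy property~(A) at all scales---any attempt to control the $\phi_n$'s uniformly at scales comparable to $\mathrm{diam}(Q_n)$ is doomed. The resolution is that property~(A) need only be checked at the \emph{fixed} local scale $S=S(R,\e)$, and the growing girth of $N_n$ renders the scale-$S$ picture indistinguishable from that of a tree, reducing the estimate to the (uniform) property~(A) of finite trees. The combinatorial construction of $\sigma_n(a),\sigma_n(b)$ realising the desired $Q_n$-action consistently across the boundary of $W_n$---so that the permutation structure is genuinely realised while preserving the cycle-like global geometry---is the delicate combinatorial point, but it is possible because the non-transitivity available to us provides enough freedom to choose the boundary pairings independently of the $Q_n$-data.
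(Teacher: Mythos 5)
The fatal step is your verification of property (A) near $W_n$. The claim that, once $\mathrm{girth}(N_n)>2S$, ``the scale-$S$ picture is indistinguishable from a tree, so one applies the standard tree construction'' is exactly the fallacy that the girth criterion of \cite{WY} (cited in the introduction) rules out. Property (A) for a tree is not established by any purely local recipe: the normalized indicators of balls of radius $S$ do \emph{not} satisfy the overlap estimate on a $4$-regular tree, and the construction that does work uses geodesic segments directed toward a fixed end of the tree --- a global datum that has no coherent analogue on a finite graph of large girth. If ``locally a tree at every fixed scale $S$'' sufficed, every large-girth box space of $\F_2$ would have property (A), contradicting both \cite{WY} and correspondence $(1)$. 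Worse, your own requirements work against you: faithfulness forces $|Q_n|\to\infty$ (since $\bigcap_n N_n=\{e\}$) and you also impose $\mathrm{girth}(Q_n)\to\infty$; because the detour around the window through the cycle has length about $n$, which dwarfs $\mathrm{diam}\,\mathrm{Cay}(Q_n)\le\log n$, the window carries an (essentially) isometrically embedded copy of $\mathrm{Cay}(Q_n)$, up to the unspecified boundary edge-swaps. Since property (A) passes to subspaces, the girth criterion then makes $\sqcup_n \mathrm{Sch}(X_n)$ fail property (A) under the most natural reading of your encoding; at best, the unspecified ``edge-swapping'' leaves the question open, but in no case does the proposed local-to-global argument prove it. (A secondary, fixable gap: the faithfulness dichotomy misses elements such as $ab^{-1}$, which have total exponent sum zero --- hence act trivially outside $W_n$ --- yet do not lie in $[\F_2,\F_2]$.)

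For contrast, the paper's route avoids large girth entirely, and this is not an accident. It writes $\F_2=\Z\star\Z$, takes finite \emph{cyclic} (hence amenable) quotients $\overline{A}_n,\overline{B}_n$, and uses Baumslag's action of $G_n=\overline{A}_n\star\overline{B}_n$ on the truncated normal forms $U_n(k)$; the stabilizer of the empty word is a non-normal finite-index subgroup, faithfulness is immediate, and the Schreier graphs embed isometrically into the Cayley graphs of the $G_n$. Property (A) of $\sqcup_n G_n$ is then Theorem \ref{thm:thm6}, whose proof is genuinely global: each $G_n$ is decomposed as a tree of amenable metric spaces over its Bass--Serre tree, and partitions of unity are glued using the equi-exactness of the family of trees (Proposition \ref{prop:prop18}, Proposition \ref{prop:prop19}, Corollary \ref{cor:cor5}). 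If you want to salvage a ``cycle with decorations'' construction, the decorations must themselves form an equi-exact (property (A)) family --- e.g.\ built from finite cyclic pieces as in the paper --- rather than large-girth quotients, and the verification must be a gluing argument of this kind, not an appeal to local tree-likeness.
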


Our investigation on free products was motived by the following observation:\\
Consider a free product $G=A\star B$. $G$ acts without bounded orbits on a tree (cf. \cite{Ser} and Section \ref{sec:sec4}) and so does not have property (T) \cite{BHV}. This finds an analogue \textit{at finite scale} in box space theory:
\begin{Prop}\label{prop:prop3}
Let $A$, $B$ be two residually finite groups and $G=A\star B$ their free product. Then $G$ has a non-expander box space.
\end{Prop}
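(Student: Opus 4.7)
I would construct, for a fixed finite generating set $S\subset G$, a faithful sequence $(N_n)_n$ of finite-index normal subgroups of $G$ such that the Cayley graphs $\mathrm{Cay}(G/N_n,S)$ have Cheeger constants tending to zero. To disentangle faithfulness from the spectral decay, I would set $N_n:=L_n\cap M_n$, where $(M_n)$ is any chain of finite-index normal subgroups of $G$ with $\bigcap_n M_n=\{e\}$ (which exists by residual finiteness of $G=A\star B$, cf.\ \cite{BT}) and $(L_n)$ is engineered so that $\mathrm{Cay}(G/L_n,S)$ has small Cheeger constant. Since the natural projection $\mathrm{Cay}(G/N_n,S)\twoheadrightarrow\mathrm{Cay}(G/L_n,S)$ is a Galois covering of finite graphs and covering maps never increase the Cheeger constant, both properties transfer to $(N_n)$.

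To build $(L_n)$, assuming $|A|,|B|\ge 2$ as needed for the statement to be meaningful, fix proper finite quotients $\bar A=A/A_0$ and $\bar B=B/B_0$ and consider the natural surjection $\pi\colon G\twoheadrightarrow H:=\bar A\star\bar B$. Since $\pi$ is onto, $\mathrm{Cay}(G/\pi^{-1}(K),S)=\mathrm{Cay}(H/K,\pi(S))$ for every finite-index $K\triangleleft H$, reducing the task to producing small-Cheeger quotients of the infinite virtually free group $H$. By Bass--Serre theory, $H$ contains a finite-index normal free subgroup $F\triangleleft H$ of positive rank. For each $m\ge 2$ I would choose a non-trivial character $\chi_m\colon F\twoheadrightarrow F^{ab}/m\hookrightarrow\mathbf{U}(1)$, let $K_m\triangleleft H$ be the normal core of $\ker\chi_m$, and consider the induced unitary representation $\rho_m:=\mathrm{Ind}_F^H\chi_m$ of dimension $[H:F]$. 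This $\rho_m$ factors through $H/K_m$ because each $k\in K_m$ acts on the standard basis vector $e_i$ by the scalar $\chi_m(c_i^{-1}kc_i)=1$, and it contains no trivial subrepresentation by Frobenius reciprocity since $\chi_m\ne 1$. On the other hand the uniform unit vector $v:=[H:F]^{-1/2}\sum_i e_i$ satisfies
\[
\|\rho_m(s)v-v\|\le\max_i|\chi_m(f_{i,s})-1|\xrightarrow[m\to\infty]{}0\qquad\forall\,s\in\pi(S),
\]
because the finitely many $f_{i,s}\in F$ appearing in the coset decomposition $sc_i=c_{\sigma_s(i)}f_{i,s}$ are fixed while $\chi_m\to 1$ pointwise on $F$. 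Thus $\rho_m$ is a non-trivial sub-representation of the regular representation of $H/K_m$ with almost-invariant unit vectors, which forces the spectral gap of $\mathrm{Cay}(H/K_m,\pi(S))$ to zero; Cheeger's inequality then gives Cheeger constants tending to zero. Setting $L_n:=\pi^{-1}(K_{m(n)})$ for any $m(n)\to\infty$ and $N_n:=L_n\cap M_n$ completes the construction.

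The most delicate point is the passage from the character $\chi_m$, defined only on the normal subgroup $F$, to a representation of a \emph{finite} quotient of $H$. Since $\ker\chi_m$ is only $F$-normal, the normal-core construction is essential, and one must verify that taking the intersection of the (finitely many) $H$-conjugates of $\ker\chi_m$ preserves finite index in $H$. Once this is settled, the almost-invariance estimate together with the decomposition of the regular representation into irreducibles and Cheeger's inequality combine to give the result.
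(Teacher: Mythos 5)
Your proposal is correct (up to one cosmetic slip, noted below) but it takes a genuinely different route from the paper. The ingredient you share with the paper is the monotonicity of the Cheeger constant under passing to quotient graphs: your claim that covering maps of finite Cayley/Schreier graphs never increase the Cheeger constant is exactly the paper's Lemma \ref{lem:lem6}, and your trick of intersecting with an auxiliary chain $(M_n)$ to force trivial intersection is a legitimate substitute for the paper's faithfulness argument (Proposition \ref{prop:prop8bis} and the Baumslag remark). Where you diverge is in how the small-Cheeger quotients are produced. The paper works with Baumslag's symmetric representations on the truncated normal forms $U_n(k)$ and exhibits explicit F\o lner-type subsets $P_n^A(k)$ (words whose first letter lies in $\overline{A}_n$) whose edge boundary sits inside the star of $e$ and so has size at most $|S|$, while $|P_n^A(k)|\to\infty$; this is an entirely elementary count, with Lemma \ref{lem:lem6} used to climb from the Schreier graphs back to the Cayley graphs of $G/N_{(n,k)}$. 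You instead pass to $H=\overline{A}\star\overline{B}$, use its finite-index normal free subgroup $F$, induce characters $\chi_m$ of $F$ to obtain finite quotients $H/K_m$ carrying representations with no invariant vectors but with almost-invariant unit vectors, and conclude via the discrete Cheeger inequality; in essence you exploit that $H$ virtually surjects onto $\Z$, so that its cyclic-type congruence quotients destroy property $(\tau)$. Both arguments are sound: the paper's is more self-contained and avoids representation theory, while yours is shorter once standard facts (Frobenius reciprocity, the spectral-gap/almost-invariant-vector dictionary, Cheeger's inequality) are granted and makes the underlying mechanism transparent. Two small repairs: $F^{ab}/m\cong(\Z/m)^{\operatorname{rk}F}$ does not embed into $\mathbf{U}(1)$ once the rank is at least $2$, so you should instead take a surjection $F\to\Z\to\Z/m\subset\mathbf{U}(1)$ (e.g.\ the exponent sum of one fixed basis element); this still gives $\chi_m\neq 1$, a finite-index kernel, and $\chi_m\to 1$ pointwise, which is all your estimate uses. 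And the point you flag as delicate is in fact immediate: $\ker\chi_m$ has finite index in $H$ (index $m[H:F]$), so its normal core, being an intersection of finitely many finite-index conjugates, has finite index in $H$.
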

In other words, we cannot expect a free product with property $(\tau)$ \cite{Lu} even if both of its factors has property (T).
The proof, given in Section \ref{sec:sec5}, uses only elementary considerations but as far as we know has never been written. 

We finish this introduction by a question. It deserves to be noted that all known examples of non-amenable embeddable metric spaces come from groups which admit  (or contain a large subgroup who does) an action without bounded orbits on a tree.\\ 
Can we find an example of embeddable non-amenable box space coming from a group with property (FA) \cite{BHV}?\\
Recall that a group has property (FA) if all of its actions on a tree have a fixed point.

\section{Outline}

Here we explain the idea behind the proof of Theorem \ref{thm:thm1} and detail the structure of this paper.\\

Let us start with the classic case of discrete metric spaces.\\
We consider the free product of two amenable groups $G=A\star B$ endowed with a proper length function $\ell$.\\
The group $G$ can be described as an extension of the direct product $A\times B$ by a free subgroup $D$ (cf. Lemma \ref{lem:d}):
$$1\rightarrow D\rightarrow G\rightarrow A\times B\rightarrow 1$$
At first look this exact sequence is only algebraic and apriori does not give any information on the geometry of $(G,\ell)$.
Now if we endow $D$ with the restricted length $\ell|_D$ and $A\times B$ with the quotient length $\overline{\ell}$ induced by $\ell$.
Some aspects of the coarse geometry of $(G,\ell)$ as uniform embeddability can be understood through the geometry of $(D,\ell|_D)$ and $(A\times B,\overline{\ell})$.\\
Indeed, if $A$, $B$ and so $A\times B$ are amenable, they have property (A) as metric groups with respect to any proper length and by a theorem of \cite{DMG} the coarse embedding of $(G,\ell)$, can be reduced to the coarse embedding of $(D,\ell|_D)$. 
Moreover since proper length functions on a given group are unique up to coarse equivalence.
The length function $\ell|_D$ on $D$ can be replaced by any other proper length function.
So a trick to prove the uniform embedding of $D\simeq\F_\infty$ endowed with $\ell|_D$ is to realize it as a subgroup of the rank 2 free group $\F_2$ endowed with a word metric on a basis for which it is known that an uniform embedding exists (cf. \cite{BrOz}) and then use the uniqueness up to coarse equivalence of proper lengths on $\F_\infty$ to conclude.\\
Note that as soon as $G$ is non-trivial and $A$ or $B$ has infinite cardinality, $D$ has infinite rank (cf. Proposition \ref{prop:prop7bis}) and so the class of proper length on $D$ are not the natural one. 
In particular $\ell|_D$ cannot be replaced by a word length on a generating of $D$, since the latter distance is not proper on the infinite rank free group.\\

Our approach, under the assumption that $A$ and $B$ are residually finite, is to do something similar at finite scale.\\
Let $\overline{A}$ and $\overline{B}$ be two finite quotients of respectively $A$ and $B$.
We denote $\overline{G}=\overline{A}\star\overline{B}$ the free product of these two groups.\\
Then the following commutative diagram holds:
$$\xymatrix{
1\ar[r]&
D\ar[r]\ar[d]&
G\ar[r]\ar[d]&
A\times B\ar[r]\ar[d]&
1\\
1\ar[r]&
D'\ar[r]\ar[d]&
\overline{G}\ar[r]\ar[d]&
\overline{A}\times \overline{B}\ar[r]\ar[d]&
1\\
&1&1&1&\\
}$$
where the free subgroup $D'$ has, this time, finite rank (cf. Lemma \ref{lem:d}).\\
Our prototype of finite quotients has the form $\overline{G}/H$ where $H\unlhd D'$ is a finite index normal subgroup of $D'$.
In particular $\overline{G}/H$ appears as an extension of $\overline{A}\times\overline{B}$ by a large girth subgroup $D'/H$.\\
This process, which is detailed in Section \ref{sec:sec4}, gives faithful sequence under reasonable assumptions on the choice of the $\overline{A}$, $\overline{B}$ and $H$.

Now, let $(G_n)_n$ be a faithful sequence of finite quotients of $G$ built as described before.
We endow each $G_n$ with the quotient length induced by $\ell$ that we denote the same way.
Here we assume that $G_n$ is an extension of a direct product $A_n\times B_n$ of finite quotients of $A$ and $B$ by a subgroup of large girth $D_n$. \\
As in the classic case, our approach is to reduce the coarse geometry of the metric space $\sqcup_n(G_n,\ell)$ to the geometry of $\sqcup_n(D_n,\ell|_{D_n})$ and $\sqcup_n(A_n\times B_n,\overline{\ell})$.\\
The first step is to reduce the proof of Theorem \ref{thm:thm1} to the uniform embedding of the sequence $\sqcup_n (D_n,\ell|_{D_n})$ (cf. Theorem \ref{thm:thm3}) under the assumption that $A$ and $B$ are amenable.
This is done in Subsection \ref{sub:sub41} where we develop gluing techniques inspired by \cite{DG} for metric spaces obtained as coarse disjoint union.

The second step of the proof which concerns the uniform embedding of the space $\sqcup_n(D_n,\ell|_{D_n})$ is a bit more delicate.
Indeed if we refer to the preceding notations, we have to do a suitable choice of the $H$'s inside of the $D'$'s to ensure the uniform embedding of $\sqcup_n(D_n,\ell|_{D_n})$.
The difficulty here is that the proper lengths on the $D_n$'s do not correspond to the natural class of length we would like to consider on them as soon as $G$ is non-trivial and $A$ or $B$ has infinite cardinality, if we refer to our remark on the classic case.
To solve this part we compare the sequence $((D_n,\ell|_{D_n}))_n$ to the one obtained by taking of each $D_n$ the quotient length induced by $D$ endowed with a word length on a suitable basis (cf. Proposition \ref{prop:prop7bis}).
We call this new sequence the \textit{combinatorial sequence}.
Note that, this procedure is not free.
Indeed, the \textit{compression} function $\rho_{n,-}$ of $(D_n,\ell|_{D_n})\rightarrow (D_n,\ell_\text{Comb})$ (cf. Section 3) can be estimated at every stage $n$ and verifies $|A_n|+|B_n|\le \rho_{n,-}(1)$.
This inequality tells us, as suspected, that these two sequences of metric spaces are non-equivalent to each other if $G$ is non-trivial and $A$ or $B$ has infinite cardinality.
In spite of this, some \textit{local comparison} is possible and this is enough to conclude.\\
Let us explain it in the classic case:\\
Let $\F(S)$ be the free on an infinite countable set $S$ endowed with a proper length $\ell$ and $F_k=\F(S_k)$ where $S_k\subset S$ is a finite subset of $S$ with $S_k\subset S_{k+1}$ and $S=\cup_kS_k$.
Since $\ell$ is proper, for all $R\ge0$, there exists $K_0$ such that $B_\ell(e,R)\subset F_k$ for all $k\ge K_0$.
Since $F_k$ embeds uniformly a direct limit argument can be applied to prove that $\F(S)$ uniformly embeds too (cf. \cite{DMG}).

An investigation of the geometry of $D$ allows us to build a sequence  $(X_k)_k$ inside of $\sqcup_n D_n$ which plays a role similar to $F_k$ in the classic case.
The fundamental point is that each $X_k$ can be compared, with controlled compression and dilatation function, to its combinatorial version on all fibers of $\sqcup_n D_n$ except a finite number of them, in other words the two metric structures on $X_k$ match at infinity (cf. Corollary \ref{cor:cor5}).
As a consequence it appears that the uniform embedding of $\sqcup_n(D_n,\ell|_{D_n})$ is implied by the uniform embedding of its combinatorial version.
This is explained in Subsection \ref{sub:sub42} where we introduce a direct limit formalism and establish a stability result for coarse disjoint unions of groups (cf. Proposition \ref{prop:prop8}).
So to conclude Theorem \ref{thm:thm1} proof it is enough to treat the combinatorial case.
But the uniform embedding problem in this case can be solved by a slight modification of \cite{AGS} arguments (cf. Proposition \ref{prop:prop10}).

This paper is organized as follows:
we start with Section \ref{sec:sec3} where we introduce general results and terminologies that we need in the rest.
Section \ref{sec:sec4} is devoted to the proof of Theorem \ref{thm:thm1}.
We finish with Section \ref{sec:sec5} where we prove the Proposition \ref{prop:prop1}, \ref{prop:prop2} and \ref{prop:prop3}.


\section{Basics}\label{sec:sec3}
In this section we expose generalities on metric groups and spaces.\\

First, let us briefly recall terminologies and constructions about length functions.\\
Here $G$ is a discrete countable group.
\begin{Def}\label{def:def7}
A length function $\ell:G\rightarrow\mathbb{R}_+$ on $G$ is a function which satisfies:
\begin{description}
\item{$(1)$} for any $g,h\in G$, $\ell(g.h)\le\ell(g)+\ell(h)$.
\item{$(2)$} for any $g\in G$, $\ell(g)=\ell(g^{-1})$.
\item{$(3)$} $\ell(g)=0$ if and only if $g=e$.
\end{description}
We say that $\ell$ is proper if for any $R>0$, $\ell^{-1}([0,R])\subset G$ is a finite set of $G$, in other words, each ball of $G$ is finite. 
\end{Def}

Given a length $\ell$ on $G$, we associate a left-invariant metric $d$ by the formula $d(g,h)=\ell(g^{-1}h)$.
Moreover, starting with the left-invariant metric $d$, $\ell(g)=d(g,e)$ is a length function on $G$.
In the rest we pass to one to the other without distinction.\\

Let $H\unlhd G$ be a normal subgroup of $G$ and $\ell$ a proper length function on $G$.\\ 
The quotient length on $Q=G/H$ induced by $\ell$ is defined by the formula: $\ell_Q(\overline{g})=\inf_{h\in H}\ell(gh)$ where $\overline{g}\in Q$ is the class of an element $g\in G$.\\
Since $H$ is normal it is routine to prove that conditions $(1)$ and $(2)$ of Definition \ref{def:def7} are verified.\\
Note that there is no natural way to build a distance on a $G$-homogeneous space in full generality.\\
Since $\ell$ is proper, for all $\e>0$, the set of $h\in H$ such that $\ell(gh)\le\ell_Q(\overline{g})+\e$ is finite.
This ensures that we can find $h_0$ such that $\ell_Q(\overline{g})=\ell(gh_0)$ and so for any $\overline{g}\in Q$, $\ell_Q(\overline{g})=0$ implies that $g\in H$. 
Condition $(3)$ is verified.
Another consequence is that for any $R\ge0$, $\pi(B_G(g,R))=B_Q(\overline{g},R)$ where $\pi$ is the quotient map. 
This set equality ensures that $\ell_Q$ is proper.\\
 
Given a symmetric generating set $S\subset G$, the word length on $G$, $\ell:G\rightarrow\R_+$, associated to $S$ is defined by:
$$\ell(a)=\inf\{n\mid a=a_1\dots a_n,\text{with $a_i\in S$}\}$$
for $a\neq0$ and $\ell(e)=0$.
It appears that when $S$ is finite, the length $\ell$ is proper.\\
In the case of free groups we assume this generating set to be a basis \cite{LS}.

Note that, concerning the coarse geometry of a locally finite metric group the choice of the proper length is not matter as the following proposition shows.
\begin{Prop}\label{prop:prop4} 
Let $G$ be a discrete countable group.\\
Suppose $\ell_1$ and $\ell_2$ are two proper length functions on $G$. Then $(G,\ell_1)$ is coarsely equivalent to $(G,\ell_2)$.
\end{Prop}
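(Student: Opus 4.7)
The plan is to exhibit the identity map $\mathrm{id}:G\to G$ as a coarse equivalence, which amounts to producing two proper functions $\rho_-,\rho_+:\R_+\to\R_+$ with
\[
\rho_-(\ell_1(g)) \;\le\; \ell_2(g) \;\le\; \rho_+(\ell_1(g))
\]
for every $g\in G$, together with the symmetric bounds obtained by swapping $\ell_1$ and $\ell_2$. Since the identity is literally a bijection, any such two-sided control witnesses a coarse equivalence in the sense of Definition \ref{def:def1}. The case in which $G$ is finite is trivial (both length functions are bounded, so any proper $\rho_\pm$ dominating the finitely many values works), so in what follows I may assume $G$ is infinite.

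The upper bound is immediate. Set
\[
\rho_+(R) \;=\; \max\bigl\{\,\ell_2(g) : g\in G,\ \ell_1(g)\le R\,\bigr\}.
\]
The ball $B_{\ell_1}(e,R)$ is finite by properness of $\ell_1$, so $\rho_+(R)$ is a well-defined real number; it is non-decreasing, and it tends to infinity because the balls $B_{\ell_1}(e,R)$ exhaust the infinite group $G$ while $\ell_2$ is unbounded on $G$. By construction $\ell_2(g)\le \rho_+(\ell_1(g))$ for every $g\in G$.

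For the lower bound I invert the same idea. Define
\[
\tilde\rho(R) \;=\; \max\bigl\{\,\ell_1(g) : g\in G,\ \ell_2(g)\le R\,\bigr\},
\]
which is a finite real number by properness of $\ell_2$. This automatically gives $\ell_1(g)\le \tilde\rho(\ell_2(g))$, and one converts this into the required pointwise estimate by setting $\rho_-(t) = \inf\{R\ge 0 : \tilde\rho(R)\ge t\}$, a non-decreasing function for which $\tilde\rho(\ell_2(g))\ge \ell_1(g)$ forces $\ell_2(g)\ge \rho_-(\ell_1(g))$.

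The one step that is not purely a definition-chase is the assertion that $\rho_-$ is proper, i.e.\ that $\tilde\rho(R)\to\infty$ as $R\to\infty$; this is where I expect the only real obstacle. If it failed, there would exist $M<\infty$ and a sequence $(g_n)$ in $G$ with $\ell_2(g_n)\to\infty$ but $\ell_1(g_n)\le M$. The latter confines the $g_n$ to the finite ball $B_{\ell_1}(e,M)$, contradicting $\ell_2(g_n)\to\infty$. The construction is symmetric in $(\ell_1,\ell_2)$, so exchanging their roles yields the reverse bounds and hence coarse equivalence of $(G,\ell_1)$ and $(G,\ell_2)$.
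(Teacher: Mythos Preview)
Your argument is correct and follows essentially the same approach as the paper: both use the identity map and verify that the compression and dilatation functions are proper by exploiting the finiteness of $\ell_i$-balls. Your construction of $\rho_-$ by inverting $\tilde\rho$ is a cosmetic variant of the paper's direct definition $\rho_-(t)=\inf\{\ell_2(g):\ell_1(g)\ge t\}$, and the key step---that $\ell_2(g)\le R$ forces $g$ into a finite set, hence $\ell_1(g)$ bounded---is the same in both.
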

\begin{proof}
Let's define $\rho_-(t)=\inf_{\{g\mid\ell_1(g)\ge t\}}\ell_2(g)$ and $\rho_+(t)=\sup_{\{g\mid\ell_1(g)\le t\}}\ell_2(g)$ which are well defined because of the properness assumption on $\ell_1$. 
These two increasing functions are called respectively \textbf{compression} and \textbf{dilatation}. 
They verify the following inequalities:
$$\rho_-(d_1(g,h))\le d_2(g,h)\le\rho_+(d_1(g,h))$$
for any $g,h\in G$.
Moreover, they are respectively the largest and smalest control function such that the above inequalities hold.
Actually, coarse embeddability of a space into another is equivalent to the existence and properness of $\rho_\pm$.\\
Using their definitions, in order to verify that $\rho_-$ (respectively, $\rho_+$) is proper, it is enough to prove that, for all $R>0$, there exists $R'>0$ such that $\ell_2(g)\le R$ (respectively, $\ell_1(g)\le R$) implies $\ell_1(g)\le R'$ (respectively, $\ell_2(g)\le R$).
But since $\ell_2$ (respectively, $\ell_1$) is proper and $G$ is discrete, the set $\{g\mid\ell_2(g)\le R\}$ (respectively, $\{g\mid\ell_1(g)\le R\}$) is finite and thus any bound of $\ell_1$ (respectively, $\ell_2$) on this set do the job.
\end{proof}

Let's introduce some terminologies and results about sequences of metric spaces.\\
\begin{Def}\label{def:def8}
Let $\XX=((X_n,d_{X_n}))_n$ and $\YY=((Y_n,d_{Y_n}))_n$ be two metric space sequences. 
We say $\XX$ coarsely embeds in $\YY$ if there exists a sequence of maps $\FF=(F_n:X_n\rightarrow Y_n)_n$ and two proper functions $\rho_\pm:\mathbb{R}_+\rightarrow\mathbb{R}_+$ such that
\[\rho_-(d_{X_n}(x,y))\le d_{Y_n}(F_n(x),F_n(x'))\le\rho_+(d_{X_n}(x,x'))\]
for any $n$ and $x, x'\in X_n$.\\
If moreover $(F_n(X_n))_n$ is uniform coarsely dense in $\YY$, i.e if there exists $R>0$ such that $Y_n$ is contain in the $R$-neighborhood  of $F_n(X_n)$ for any $n$, we say that $\FF$ is a coarse equivalence or $\XX$ and $\YY$ are coarsely equivalent.
\end{Def}
In this paper we consider principally embeddings of sequences of metric spaces into Hilbert space, when there no confusion we just say that the sequence \textit{coarsely embeds}.

\begin{Lem}\label{lem:hilb}
Let $\HH_n$ be a sequence of Hilbert spaces. 
The space $\sqcup_n\HH_n$ coarsely embeds into Hilbert space.
\end{Lem}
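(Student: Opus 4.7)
My plan is to exhibit an explicit bi-Lipschitz embedding into a Hilbert direct sum; coarse embeddability is then immediate. Since a coarse disjoint union is defined only up to coarse equivalence, the first step is to fix a convenient representative of the metric on $\sqcup_n \HH_n$: I would choose the origins $0_n \in \HH_n$ as basepoints and set $d(x,y) := \|x - y\|_{\HH_n}$ whenever $x, y \in \HH_n$, and $d(x,y) := \|x\|_{\HH_n} + \|y\|_{\HH_m} + |2^n - 2^m|$ whenever $x \in \HH_n$, $y \in \HH_m$ with $n \neq m$. A short triangle-inequality check (based on the triangle inequality for $|2^n - 2^m|$ on $\N$) shows this is a metric consistent with Definition \ref{def:coarse}, with $p(x) = n$ and $\rho(t) = t$.

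Next I would take the target Hilbert space to be $\HH := \ell^2(\N) \oplus \bigoplus_n \HH_n$ (Hilbert direct sum) and the embedding to be
\[F(x) \,:=\, 2^n e_n \,\oplus\, \iota_n(x) \qquad \text{for } x \in \HH_n,\]
where $e_n$ is the $n$-th standard basis vector of $\ell^2(\N)$ and $\iota_n$ is the canonical inclusion of $\HH_n$ as the $n$-th summand of $\bigoplus_n \HH_n$. The role of the auxiliary $\ell^2(\N)$-factor, with its coefficients tuned to the exponential scale of $d_{\mathrm{f.a.}}$, is solely to separate distinct fibers inside $\HH$ while remaining invisible within any single fiber.

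The verification then splits into two cases. When $x, y$ both lie in $\HH_n$, the $\ell^2(\N)$-component of $F(x) - F(y)$ vanishes and $F$ restricts to an isometry on that fiber. When $x \in \HH_n$, $y \in \HH_m$ with $n \neq m$, orthogonality of the four summands in $\HH$ yields
\[\|F(x) - F(y)\|_\HH^2 \,=\, 2^{2n} + 2^{2m} + \|x\|^2 + \|y\|^2,\]
and I would combine the elementary inequalities $\tfrac{1}{2}(a + b + c + d) \leq \sqrt{a^2 + b^2 + c^2 + d^2} \leq a + b + c + d$ with the bound $2^n + 2^m \leq 3|2^n - 2^m|$, valid whenever $n \neq m$, to obtain the uniform two-sided estimate $\tfrac{1}{2}\, d(x,y) \leq \|F(x) - F(y)\|_\HH \leq 3\, d(x,y)$.

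The only delicate point is this calibration: the coefficient attached to $e_n$ in the $\ell^2(\N)$-factor has to be of the same order as $d_{\mathrm{f.a.}}(n,m)$ in order to dominate the inter-fiber contribution from below, yet must contribute nothing on same-fiber comparisons. The choice $2^n$ achieves both at once, which is precisely why the two-sided estimate comes out uniform in $n$ and $m$; everything else is a routine orthogonality computation.
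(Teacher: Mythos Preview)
Your proof is correct and follows essentially the same idea as the paper: embed each $\HH_n$ as the $n$-th summand of a Hilbert direct sum and add an extra factor, scaled like $2^n$, to separate the fibers. The only differences are cosmetic: the paper uses a single copy of $\C$ (mapping $v_n\mapsto v_n\oplus 2^n$, so the inter-fiber term is $|2^n-2^m|^2$ rather than your $2^{2n}+2^{2m}$) and then verifies properness of the abstract compression and dilatation functions against Definition~\ref{def:coarse}, whereas you fix a concrete representative metric and obtain explicit bi-Lipschitz constants $\tfrac12$ and $3$. Your execution is a bit cleaner; the underlying strategy is the same.
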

\begin{proof}
Let's denote $X=\sqcup_n\HH_n$ the coarse disjoint union of $(\HH_n)_n$ and $\rho, p$ are as in Definition \ref{def:coarse}.\\
We consider $\HH=\bigoplus_n\HH_n\oplus\C$ and define the map $F:X\to\HH$ given by the formula: $F(v_n,n)=(v_n)\oplus2^n$.\\
We have that:
\begin{equation}\label{eq:eqlem1}
\|F(v_n)-F(v_m')\|=\left\{\begin{array}{ll}
\|v_n-v_m'\|&\text{if $n=m$}\\
\sqrt{|2^n-2^m|^2+\|v_n\|^2+\|v_m'\|^2}&\text{otherwise}
\end{array}\right.
\end{equation}

Now, let's consider
$$\rho_-(t)=\inf_{\{(v_n,v_m')\in X^{\times 2}\mid t\le d_X(v_n,v_m')\}}\|F(v_n)-F(v_m')\|$$
and 
$$\rho_+(t)=\sup_{\{(v_n,v_m')\in X^{\times 2}\mid d_X(v_n,v_m')\le t\}}\|F(v_n)-F(v_m')\|$$
, respectively, the compression and dilatation functions.\\
By considering separately the case $n=m$ and $n\neq m$, the formula (\ref{eq:eqlem1}) implies that, for all $R\ge 0$, there exists $R'\ge 0$ such that $\|F(v_n)-F(v_m')\|\le R$ implies that $d_X(v_n,v_m')\le R'$.
In other words $\rho_-$ is proper.\\
In the other hand, since $d_X(v_n,v_m')\ge\rho(d_\text{f.a}(p(v_n),p(v_m')))$ for all $v_n,v_m'\in X$, there exists $N_t$ such that $d_X(\HH_n,\HH_m)\ge t+1$ for all $n,m\ge N_t$ with $m\neq n$.\\
It follows that:
$$\rho_+(t)=\sup_{A_t\cup B_t}\|F(v_n)-F(v_m')\|$$
with $A_t=\{(v_n,v_m')\in (\cup_{n\le N_t}\HH_n)^{\times 2}\mid d_X(v_n,v_m')\le t\}$ and $B_t=\{(v_n,v_n')\in (\cup_n\HH_n)^{\times 2}\mid d_X(v_n,v_n')\le t\}$.
Using part $(2)$ of the Definition \ref{def:coarse}, there exists $R_t\ge 0$ such that $d_X(v_n,v_m')\le t$ implies that $v_n\in B_n(0,R_t)\subset \HH_n$ and $v_m\in B_m(0,R_t)\subset \HH_m$ for all $n\neq m$ with $n,m\le N_t$.\\
As a consequence $\rho_+(t)\le\max{(t, \sqrt{2^{2N_t+2}+2R_t^2})}$, in particular $\rho_+$ is well defined.
The following inequalities imply the properness of $\rho_+$:
\begin{align*}
\sup_{B_t}\|F(v_n)-F(v_n')\|&=\sup_{B_t}\|v_n-v_n'\|\\
=t\le\rho_+(t).
\end{align*}
\end{proof}

The following lemma allows us to pass to a language to another concerning embeddings of sequences of metric spaces.
\begin{Lem}\label{lem:lem1}
Let $\XX=(X_n)_n$ be a sequence of metric spaces.
The following are equivalent:
\begin{description}
\item{$(1)$} $\sqcup_n X_n$ coarsely embeds into Hilbert space.
\item{$(2)$} There exist a sequence of maps $\FF=(F_n: X_n\rightarrow \HH_n)_n$ where $\HH_n$ denote a Hilbert space and two proper functions $\rho_\pm:\mathbb{R}_+\rightarrow\mathbb{R}_+$, such that:
$$\rho_-(d_{X_n}(x,y))\le \|F_n(x)-F_n(x')\|\le\rho_+(d_{X_n}(x,x'))$$
for all $n$ and $x, x'\in X_n$.
In other words, $\XX$ coarsely embeds into Hilbert spaces.
\item{$(3)$} For all $R>0$ and $\e>0$, there exists a sequence $(\varphi_n)_n$ of maps $\varphi_n:X_n\rightarrow \HH_n$ with $\HH_n$ a Hilbert space, such that  $\|\varphi_n(x)\|=1$ and:
\begin{description}
\item{$(i)$} for all $n\ge1$ and $ x,y\in X_n$ with $d_{X_n}(x,y)\le R$, $\|\varphi_n(x)-\varphi_n(y)\|\le\e$.
\item{$(ii)$} for all $\e'>0$, there exists $M>0$, $|\langle \varphi_n(x)|\varphi_n(y)\rangle|\le\e'$ whenever $d_{X_n}(x,y)\ge M$.
\end{description}
\end{description}
\end{Lem}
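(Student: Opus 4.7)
The plan is to establish the equivalences with $(2)$ as a hinge: I would prove $(1)\Leftrightarrow(2)$ by translating between global and fiberwise coarse data, and $(2)\Leftrightarrow(3)$ by a Schoenberg-type construction with Gaussian reproducing kernels.

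For $(1)\Rightarrow(2)$ I would restrict a coarse embedding $F\colon\sqcup_nX_n\to\HH$ with control $\rho_\pm$ to each fiber. Since $p^{-1}(n)$ is isometric to $X_n$, the restrictions $F_n=F|_{X_n}$ into $\HH_n:=\HH$ inherit the same $\rho_\pm$. For $(2)\Rightarrow(1)$ I would mimic the proof of Lemma \ref{lem:hilb} by defining $G\colon\sqcup_nX_n\to\bigoplus_n\HH_n\oplus\C$ via $G(x):=(F_n(x)-F_n(x_{0,n}))\oplus 2^n$ for $x\in X_n$ and a choice of base points $x_{0,n}$. The same splitting into same-fiber and cross-fiber contributions as in Lemma \ref{lem:hilb}, together with part $(2)$ of Definition \ref{def:coarse} applied to the bounded sets $U_{n,m}^R$ to control the intra-fiber terms, yields proper compression and dilatation.

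For $(2)\Rightarrow(3)$ I would apply Schoenberg's theorem to the negative type kernels $k_n(x,y)=\|F_n(x)-F_n(y)\|^2$. For any $t>0$, the kernels $K_n^t(x,y)=e^{-t\,k_n(x,y)}$ are positive definite, and the associated GNS constructions produce unit-vector maps $\varphi_n^t\colon X_n\to\HH_n^t$ satisfying $\langle\varphi_n^t(x),\varphi_n^t(y)\rangle=e^{-t\|F_n(x)-F_n(y)\|^2}$. Given $R,\e$, I would pick $t$ small enough that $2-2e^{-t\rho_+(R)^2}\le\e^2$ to obtain $(i)$; condition $(ii)$ then follows because the inner product is bounded above by $e^{-t\rho_-(d(x,y))^2}$ and $\rho_-$ is proper.

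For $(3)\Rightarrow(2)$ I would extract from $(3)$ a countable family $\varphi_n^{(k)}$ for parameters $R_k=k$, $\e_k=2^{-k}$, and $\e'=1/4$ with corresponding $M_k$, and set $F_n\colon X_n\to\bigoplus_k\HH_n^{(k)}$ by $F_n(x):=\bigoplus_k\bigl(\varphi_n^{(k)}(x)-\varphi_n^{(k)}(x_{0,n})\bigr)$ for a base point $x_{0,n}\in X_n$. Pointwise summability holds since $\|\varphi_n^{(k)}(x)-\varphi_n^{(k)}(x_{0,n})\|\le 2^{-k}$ for $k\ge d(x,x_{0,n})$. The dilatation comes from splitting $\|F_n(x)-F_n(y)\|^2=\sum_k\|\varphi_n^{(k)}(x)-\varphi_n^{(k)}(y)\|^2$ at scale $k\ge d(x,y)$: the first block contributes $\le 4(k-1)$ and the tail is a geometric series. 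The compression uses that $d(x,y)\ge M_k$ forces $\|\varphi_n^{(k)}(x)-\varphi_n^{(k)}(y)\|^2\ge 3/2$, so $\|F_n(x)-F_n(y)\|^2$ grows with the number of scales $k$ satisfying $M_k\le d(x,y)$. I expect this last step to be the delicate one, since extracting uniform proper $\rho_\pm$ from the quantifier-laden data of $(3)$ requires careful bookkeeping of scales to force simultaneous convergence of the direct sum and properness of both bounds; by contrast, $(1)\Leftrightarrow(2)$ is largely coarse-geometric translation via Lemma \ref{lem:hilb} and $(2)\Rightarrow(3)$ is the familiar Schoenberg computation.
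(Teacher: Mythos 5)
Your proposal is correct and follows essentially the same route as the paper: restriction for $(1)\Rightarrow(2)$, the coarse-disjoint-union gluing of Hilbert spaces for $(2)\Rightarrow(1)$ (you inline the construction of Lemma \ref{lem:hilb} with base-point translates where the paper builds a map into $\sqcup_n\HH_n$ and then invokes that lemma), Gaussian/Schoenberg kernels $e^{-t\|F_n(x)-F_n(y)\|^2}$ for $(2)\Rightarrow(3)$ exactly as in the paper's exponential (Fock-space) argument, and the same multi-scale telescoping sum $\bigoplus_k(\varphi_n^{(k)}(x)-\varphi_n^{(k)}(x_{0,n}))$ with scale-by-scale bookkeeping of the $M_k$ for $(3)\Rightarrow(2)$, differing only in the choice of parameters $R_k,\e_k$.
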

\begin{proof}
$(1)$ implies $(2)$: This is because $X_n$ embeds isometrically into $\sqcup_n X_n$.

$(2)$ implies $(1)$: Let $\FF$, $\rho_\pm$ be as in $(2)$ and $d_X$ the metric on $X=\sqcup_n X_n$.\\
Let $F: (X,d_X,\rho_X,p_X) \to (\HH=\sqcup_n\HH_n,d_\HH,\rho_\HH,p_\HH)$ be the map induced by $\FF$ given by the formula $F(x_n)=F_n(x_n)$.\\
Let's fix $x_{0,n}\in X_n$ for all $n$.
We may assume that $\rho_X=\rho_\HH=\rho$ (otherwise we take the infimum).\\
We have the following inequalities hold:
\begin{equation}\label{eq:lem2}
\begin{aligned}
\rho(d_\text{f.a}(p_X(x_n),p_X(x_m)))&=\rho(d_\text{f.a}(p_\HH\circ F_n(x_n),p_\HH\circ F_m(x_m)))\\
\le d_\HH(F_n(x_n), F_m(x_m))
\end{aligned}
\end{equation}
and
\begin{equation}\label{eq2:lem2}
\rho_-(d_X(x_n,x_n'))\le d_\HH(F_n(x_n),F_n(x_n'))\le\rho_+(d_X(x_n,x_n'))
\end{equation}
for all $x_n,x_n'\in\HH_n$ and $x_m\in\HH_m$.\\
Let's prove the properness of the dilatation function (the proof for the compression follows the same scheme).\\
If $d_X(x_n,x_m)\le R$, in the case $n=m$ by inequalities (\ref{eq2:lem2}) we have $d_\HH(F_n(x_n),F_m(x_m))\le R'=\rho(R)$.
Otherwise $n\neq m$ so by inequalities (\ref{eq:lem2}) $n,m\le N$ where $N$ depends on $R$ and the part $(2)$ of Definition \ref{def:coarse} ensures that $d(x_{0,n},x_n),d(x_{0,m},x_m)\le M$.
Using inequalities (\ref{eq2:lem2}) once again we have $d(F_n(x_{0,n}),F_n(x_n)),d(F_m(x_{0,m}),F_m(x_m))\le M'$.
In other words the dilatation function is proper.
We use the Lemma \ref{lem:hilb} to conclude.\\
$(2)$ implies $(3)$: Take a vector $\xi$ in a Hilbert space $\HH$, then $\exp(\xi)=\sum_{k\ge0}\frac{\xi^{\otimes k}}{n!}\in \bigoplus_{k\ge0}\HH^{\otimes k}$ has a sense and $\langle \exp(\xi)|\exp(\xi')\rangle=e^{\langle\xi|\xi'\rangle}$ for any $\xi,\xi'\in\HH$.\\
Now let's consider $\FF$ and $\rho_\pm$ as in $(2)$ and define $\varphi_n^t(x)=e^{-t\|F_n(x)\|^2}\exp(\sqrt{2t}.F_n(x))$ with $t>0$ which verifies: $\langle\varphi_n^t(x)|\varphi_n^t(y)\rangle=e^{-t\|F_n(x)-F_n(y)\|^2}$. 
This relation implies, for any $t>0$, $\|\varphi_n^t(x)\|=1$ and :
$$e^{-t\rho_+(d_{X_n}(x,y))}\le|\langle\varphi_n^t(x)|\varphi_n^t(y)\rangle|\le e^{-t\rho_-(d_{X_n}(x,y))}$$
for any $x,y\in X_n$.
Since $\rho_+$ is proper, given $\e>0$ we can find $t_0>0$ such that $e^{-t_0\rho_+(r)}\ge1-\frac{\e}{2}$, for any $r\le R$. For such choice of parameter $t$, part $(i)$ of $(3)$ holds with $\varphi_n^{t_0}$. Furthermore because of the properness of $\rho_-$, condition $(ii)$ of $(3)$ is verified for any parameter $t$ fixed. So the implication is proven.

$(3)$ implies $(2)$: Let's take $\e_l=1/l$ and $R_l=\sqrt{l}$. 
Let $(\varphi_n^l)_n$ with $\varphi_n^l:X_n\rightarrow \HH_n^l$ as in $(3)$ for the constants $\e_l$ and $R_l$. 
Let $(M_l)_l$ such that $\|\varphi_n^l(x)-\varphi_n^l(x')\|\ge 1$ whenever $d_{X_n}(x,x')\ge M_l$ that we can assume to be an increasing sequence which goes to infinity.
We fix a point $x_n^0\in X_n$ for any $n$ and define $F_n:X_n\rightarrow \bigoplus_l \HH_n^l$ by $F_n(x_n)=\frac{1}{2}((\varphi_n^1(x_n)-\varphi_n^1(x_n^0))\oplus(\varphi_n^2(x_n)-\varphi_n^2(x_n^0))\oplus\dots)$, then:
$$\rho_-(d_{X_n}(x,x'))\le\|F_n(x)-F_n(x')\|\le d_{X_n}(x,x')+1$$
for any $n$ and $x,x'\in X_n$ with $\rho_-=\frac{1}{2}\sum_{k\ge1}\sqrt{k-1}\chi_{[M_{k-1},M_k)}$ where $\chi_{[M_{k-1},M_k)}$ is the characteristic function of the interval $[M_{k-1},M_k)$.\\
Indeed let $x,x'\in X_n$ and $l$ such that $\sqrt{l-1}\le d_{X_n}(x,x')\le \sqrt{l}$. We have:
\begin{gather*}
\|F_n(x)-F_n(x')\|^2=\frac{1}{4}\sum_{k\le l-1}\|\varphi_n^k(x)-\varphi_n^k(x')\|^2+\frac{1}{4}\sum_{k\ge l}\|\varphi_n^k(x)-\varphi_n^k(x')\|^2\\
\le (l-1)+\frac{1}{4}\sum_{k\ge l}\frac{1}{k^2}\le d(x,x')^2+1
\end{gather*}
Similarly if we take $l$ such that $M_{l-1}\le d(x,x')\le M_l$, then 
$$\|F_n(x)-F_n(x')\|^2\ge\frac{1}{4}\sum_{k\le l-1}\|\varphi_n^k(x)-\varphi_n^k(x')\|^2\ge \frac{l-1}{4}=\rho_-(d_{X_n}(x,x'))^2.$$
\end{proof}

The next proposition can be view as coarse analogue of \textit{the fundamental theorem of geometric group theory} \cite{Fa} for box spaces. 
It finds, in others, applications in the proof of Theorem \ref{thm:thm6} (cf Section \ref{sec:sec5}) which is the cornerstone of Proposition \ref{prop:prop1} and \ref{prop:prop2} but also in Corollary \ref{coro5} which is an important part of Theorem \ref{thm:thm1} proof.

\begin{Prop}\label{prop:prop5} 
Let $G$ be a discrete countable group with a proper left-invariant distance $d$. 
Let $X$ be a proper metric on which $G$ acts freely and geometrically (i.e. properly discontinuously by isometries) and $f:G\rightarrow X$ a $G$-equivariant coarse embedding.\\
If $\NN=(N_n)_n$ is a sequence of normal subgroups of $G$ with trivial intersection, then the sequence of metric groups $(G/{N_n})_n$ coarsely embeds into the sequence $\XX=(X/{N_n})_n$.\\
If moreover $f$ is a coarse equivalence, then $(G/{N_n})_n$ and $\XX$ are coarsely equivalent.
\end{Prop}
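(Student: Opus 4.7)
The plan is, for each $n$, to define $\bar f_n : G/N_n \to X/N_n$ by $\bar f_n(\bar g) = \overline{f(g)}$, and verify that the sequence $(\bar f_n)_n$ satisfies Definition \ref{def:def8} with the \emph{same} compression and dilatation $\rho_\pm$ controlling $f$. Since $G$ acts freely, properly discontinuously and by isometries on the proper space $X$, so does each $N_n$, and $X/N_n$ inherits the quotient metric $d_{X/N_n}(\bar x,\bar y) = \inf_{h\in N_n} d_X(x,h\cdot y)$. With the $G$-action written on the left and equivariance in the form $f(gg_0)=g\cdot f(g_0)$, the map $\bar f_n$ is well-defined precisely because $N_n$ is normal: if $g_2 = g_1 h$ with $h\in N_n$, then $f(g_2) = g_1 h\cdot f(e) \in g_1 N_n\cdot f(e) = N_n g_1\cdot f(e) = N_n\cdot f(g_1)$.

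The key step is a direct sandwich estimate. Using equivariance once more to rewrite $h\cdot f(g_2)=f(hg_2)$ and normality to replace $N_n g_2$ by $g_2 N_n$,
\[
d_{X/N_n}\bigl(\bar f_n(\bar g_1),\bar f_n(\bar g_2)\bigr) \;=\; \inf_{h\in N_n} d_X\bigl(f(g_1),f(g_2 h)\bigr).
\]
The coarse embedding inequality for $f$ yields $\rho_-(d(g_1,g_2 h))\le d_X(f(g_1),f(g_2 h))\le \rho_+(d(g_1,g_2 h))$ for every $h\in N_n$. Properness of $\ell$ makes the infimum $d_{G/N_n}(\bar g_1,\bar g_2)=\inf_{h\in N_n} d(g_1,g_2 h)$ attained at some $h_0$; the upper bound follows by evaluating at this $h_0$, while the lower bound follows by minimizing both sides over $h$ and using that $\rho_-$ may be taken non-decreasing. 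One therefore obtains
\[
\rho_-\bigl(d_{G/N_n}(\bar g_1,\bar g_2)\bigr) \;\le\; d_{X/N_n}\bigl(\bar f_n(\bar g_1),\bar f_n(\bar g_2)\bigr) \;\le\; \rho_+\bigl(d_{G/N_n}(\bar g_1,\bar g_2)\bigr),
\]
uniformly in $n$, which is exactly the coarse embedding of sequences from Definition \ref{def:def8}.

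If in addition $f$ is a coarse equivalence, so that $f(G)$ is $R$-coarsely dense in $X$, then for any $\bar x\in X/N_n$, choosing $g\in G$ with $d_X(x,f(g))\le R$ and projecting gives $d_{X/N_n}(\bar x,\bar f_n(\bar g))\le R$; the images $\bar f_n(G/N_n)$ are therefore uniformly $R$-coarsely dense in $X/N_n$, yielding the sequence-level coarse equivalence. The only real obstacle is the bookkeeping around left/right conventions: the quotient length on $G/N_n$ is defined through right multiplication by elements of $N_n$, whereas equivariance naturally transports left multiplication in $G$ to the given left action on $X$. It is precisely the normality of the $N_n$ that reconciles these two sides, and no further structural input (in particular, no use of the trivial intersection hypothesis) is needed for this proposition.
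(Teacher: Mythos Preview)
Your proof is correct and follows essentially the same route as the paper: define the induced maps $\bar f_n$ via equivariance (using normality of $N_n$ for well-definedness), and sandwich $d_{X/N_n}(\bar f_n(\bar g_1),\bar f_n(\bar g_2))$ between $\rho_\pm(d_{G/N_n}(\bar g_1,\bar g_2))$ by passing the infimum over $N_n$ through the coarse-embedding inequalities for $f$. Your handling of the lower bound is in fact a bit cleaner than the paper's---you obtain the same $\rho_-$ directly by monotonicity, whereas the paper argues the compression is proper via a ``for every $R$ there is $R'$'' statement---and your remark that the trivial-intersection hypothesis plays no role in this proposition is accurate.
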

\begin{proof}
Let $G$, $X$, $d$, $f$ and $\NN$ be as in the statement. Let $\rho_\pm$ be the two proper functions, that we may assume increasing, such that:
$$\rho_-(d(g,h))\le d_X(f(g),f(h))\le \rho_+(d(g,h))$$
for any $g,h\in G$.\\
Since $f$ is $G$-equivariant, for any $g\in G$, $f(g)=g.x_0$ with $x_0=f(e)$. 
Moreover $f$ induces a sequence of maps $f_n: G/{N_n}\rightarrow X_n$ (since $N_n\unlhd G$ is normal, $gN_n=N_ng$ for any $g\in N_n$) which is nothing but the well defined map $f_n(\overline{x})=\overline{f(x)}$.\\
The distance $d_{X,n}$ on $X/{N_n}$ is given by the formula $d_{X,n}(\overline{x},\overline{y})=\inf_{h,h'\in N_n}d_X(h.x,h'.y)$.
Let's define $\ell_{x_0,n}: G/{N_n}\rightarrow\R_+$ by the formula $\ell_{x_0,n}(\overline{g})=\inf_{h\in N_n}d_X(gh.x_0,x_0)=d_{X,n}(\overline{g.x_0},\overline{x_0})$ and denote $\ell_n$ the quotient length induced by $d$ on $G/{N_n}$.
Since $G$ acts isometrically it is enough for our purpose to prove the existence of two proper functions $\rho'_\pm$ such that:
\begin{equation}\label{eq:eq1}
\rho'_-(\ell_n(\overline{g}))\le \ell_{x_0,n}(\overline{g})\le \rho'_+(\ell_n(\overline{g})).
\end{equation}
for any $n$ and $\overline{g}\in G/{N_n}$.

Let's prove the left side of the inequality (\ref{eq:eq1}). 
Because $\rho_-$ is proper and does not depend on $n$, for any $R>0$, $g\in G$ with $\ell_{x_0,n}(\overline{g})\le R$ and $\e>0$, there exists $R'$ independent of $n$ such that: any $h\in N_n$ with 
$$\rho_-(\ell(gh))\le d_X(gh.x_0,x_0)\le \ell_{x_0,n}(\overline{g})+\e \le R+\e$$
is contained in a ball with center $g^{-1}$ and radius $R'$ which is a finite set since $\ell$ is proper. 
In particular for such $h\in N_n$, $\ell_n(\overline{g})\le\ell(gh)\le R'$.
In other words, for any $R>0$, there exists $R'>0$ such that for any $n$, $\ell_{x_0,n}(\overline{g})\le R$ implies $\ell_n(\overline{g})\le R'$.

For the right side of inequality (\ref{eq:eq1}) we do something similar. Since $\ell$ is proper, there exists $h_0\in N_n$ such that, $\ell_n(\overline{g})=\ell(gh_0)$ and so
$$\ell_{x_0,n}(\overline{g})\le d_X(gh_0.x_0,x_0)\le\rho_+(\ell(gh_0))=\rho_+(\ell_n(\overline{g}))$$
The second part of the proposition is because the R-density of $f(G)\subset X$ is preserved by taking quotient.
\end{proof}

Here are some consequences of Proposition \ref{prop:prop5}.
\begin{Cor}\label{cor:cor1}
Let $G$ be a discrete residually finite group and $\ell_i$, $i=1,2$, be two proper lengths on $G$.\\
Let $\NN=(N_n)_n$ be a sequence of finite index normal subgroups of $G$ with trivial intersection, then $\Box_{\NN,1} G$ and $\Box_{\NN,2} G$ are coarsely equivalent with dilation and compression functions which depend only on $\ell_1$ and $\ell_2$.
By $\Box_{\NN,i} G$ we mean the box space obtained by taking the quotient metric associated to $\ell_i$, $i=1,2$, on each $G/{N_n}$.
\end{Cor}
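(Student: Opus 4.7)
The plan is to deduce this from Proposition \ref{prop:prop5} applied to the identity map of $G$, viewed as a $G$-equivariant coarse equivalence between $(G,\ell_1)$ and $(G,\ell_2)$.

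First I would set $X=(G,\ell_2)$ and let $G$ act on $X$ by left multiplication. Since $\ell_2$ is a proper left-invariant length, $X$ is a proper metric space and the action is free, isometric and properly discontinuous, hence geometric. The identity map $f=\mathrm{id}:G\to X$ is $G$-equivariant and surjective, and by Proposition \ref{prop:prop4} it is a coarse equivalence with compression and dilation
\[
\rho_-(t)=\inf_{\ell_1(g)\ge t}\ell_2(g),\qquad \rho_+(t)=\sup_{\ell_1(g)\le t}\ell_2(g)
\]
which are proper and depend only on $\ell_1$ and $\ell_2$.

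Next I would apply Proposition \ref{prop:prop5} fiber by fiber. Using normality of $N_n$, the orbit quotient metric $d_{X,n}$ on $X/N_n=G/N_n$ coincides with the $\ell_2$-quotient length $\ell_{2,n}$, so the induced map $\bar f_n$ is just the identity between the two quotient-length structures on the finite set $G/N_n$. The proof of Proposition \ref{prop:prop5} builds the control functions $\rho'_\pm$ for $\bar f_n$ explicitly out of $\rho_\pm$, independently of $n$; this yields a uniform compression/dilation pair between $\ell_{1,n}$ and $\ell_{2,n}$ that depends only on $\ell_1$ and $\ell_2$.

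Finally I would promote these uniform fiber-wise coarse equivalences into a coarse equivalence of the two box spaces. Both $\Box_{\NN,1}G$ and $\Box_{\NN,2}G$ have the same underlying sequence of fibers $(G/N_n)_n$ and the same projection $p$ to $\N$, so the identity assembles into a bijection between them. Verifying conditions $(1)$ and $(2)$ of Definition \ref{def:coarse} for this bijection amounts to the same kind of case-splitting between same-fiber and different-fiber pairs used in the proof of Lemma \ref{lem:lem1}: on each fiber the $n$-uniform bounds $\rho'_\pm$ apply, while for pairs lying in distinct fibers one uses that the function $p$ and the \emph{far away} control $\rho$ are shared by both box-space structures. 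I expect this last bookkeeping step to be the only technical point and not a genuine obstacle; the real content lies in the uniformity of $\rho'_\pm$ in $n$, which Proposition \ref{prop:prop5} delivers for free.
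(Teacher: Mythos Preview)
Your proposal is correct and follows exactly the paper's approach: apply Proposition~\ref{prop:prop5} to the identity map $\mathrm{Id}:(G,\ell_1)\to(G,\ell_2)$, which is a $G$-equivariant coarse equivalence by Proposition~\ref{prop:prop4}. The paper's proof is a single sentence to this effect; you have simply unpacked the details it leaves implicit, in particular the assembly of the uniform fiber-wise estimates into a coarse equivalence of the box spaces.
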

\begin{proof}
We apply proposition \ref{prop:prop5} to the identity map $\text{Id}:(G,\ell_1)\rightarrow (G,\ell_2)$ which is a $G$-equivariant coarse equivalence by Proposition \ref{prop:prop4}.\\
\end{proof}

\begin{Cor}\label{cor:cor2}
Let $G$ be a discrete residually finite group and $H\le G$ a finite index subgroup of $G$.
Suppose there exists $\NN'=(N'_n)_n$ a sequence of finite index normal subgroups of $H$ with trivial intersection such that $\Box_{\NN'} H$ embeds into a Hilbert space (or any other metric space). Then $G$ also admits such a sequence.
\end{Cor}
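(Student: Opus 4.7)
The plan is to take $N_n$ to be the normal core of $N'_n$ in $G$, that is $N_n := \bigcap_{g \in G} g N'_n g^{-1}$. Since $N'_n \unlhd H$, we have $H \le N_G(N'_n)$, so the distinct conjugates of $N'_n$ in $G$ are indexed by a set of cardinality at most $k := [G:H]$; hence $N_n$ is the intersection of finitely many finite-index subgroups of $G$, giving $[G:N_n] < \infty$, and $\bigcap_n N_n \subseteq \bigcap_n N'_n = \{e\}$. Thus $(N_n)_n$ is a valid sequence of finite-index normal subgroups of $G$ with trivial intersection.

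To show that $\Box_\NN G$ embeds, I will use the Cayley-Schreier construction. After fixing a transversal $\{g_1 = e, g_2, \ldots, g_k\}$ for $H$ in $G$ and setting $L = \max_i \ell_G(g_i)$, the assignment $g \mapsto (\sigma_g, (h_1(g), \ldots, h_k(g)))$, where $\sigma_g \in \mathrm{Sym}(k)$ is the permutation of $G/H$ induced by left multiplication and $h_i(g) := g_{\sigma_g(i)}^{-1} g g_i \in H$, defines an injective homomorphism $G \hookrightarrow H \wr \mathrm{Sym}(k)$. Composing with reduction modulo $N'_n$ in each $H$-coordinate yields a homomorphism $G \to W_n := (H/N'_n) \wr \mathrm{Sym}(k)$ whose kernel is precisely $N_n$, inducing an injection $\Psi_n : G/N_n \hookrightarrow W_n$.

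The central step is to verify that $\Psi_n$ is bi-Lipschitz with constants independent of $n$, once $W_n$ is endowed with the length $\ell_{W_n}(\sigma, (\bar h_i)) := \ell_{\mathrm{Sym}(k)}(\sigma) + \sum_i \ell_{H/N'_n}(\bar h_i)$. The upper bound follows from the estimate $\ell_H(h_i(g)) \le 2L + \ell_G(g)$, taking the infimum over representatives of $gN_n$. For the lower bound, using $g_1 = e$, the first coordinate $h_1(g) = g_{\sigma_g(1)}^{-1} g$ can be lifted to a small representative in $H$, which in turn yields a representative of $gN_n$ in $G$ of $\ell_G$-length at most $L$ larger than $\ell_{W_n}(\Psi_n(gN_n))$.

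Finally, $\sqcup_n W_n$ embeds into Hilbert space: the normal subgroup $Y_n := (H/N'_n)^k$ has bounded index $k!$ in $W_n$, so $\sqcup_n W_n$ is uniformly coarsely equivalent to $\sqcup_n Y_n$. Given maps $\varphi_n : H/N'_n \to \HH_n$ witnessing the embedding of $\Box_{\NN'} H$ as in Lemma \ref{lem:lem1}(3), the tensor product $(\bar h_1, \ldots, \bar h_k) \mapsto \bigotimes_{i=1}^k \varphi_n(\bar h_i)$ verifies those same properties with parameters rescaled by a factor depending only on $k$, yielding an embedding of $\sqcup_n Y_n$ and hence of $\sqcup_n W_n$. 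Composing with $\Psi_n$ gives the desired embedding of $\Box_\NN G$. The main obstacle will be the uniform bi-Lipschitz estimate for $\Psi_n$, which requires careful length bookkeeping across the $k$ coset representatives simultaneously for all $n$.
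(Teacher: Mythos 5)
Your algebraic setup is fine: the cores $N_n$ are finite-index normal subgroups of $G$ with trivial intersection, the Kaluzhnin--Krasner-type map does induce an injection $\Psi_n : G/N_n \hookrightarrow W_n=(H/N'_n)\wr \mathrm{Sym}(k)$ with kernel exactly $N_n$, the dilatation (upper) bound holds, and the final tensor-product embedding of $\sqcup_n (H/N'_n)^k$ together with the bounded-index reduction is sound. The proof collapses precisely at the step you yourself single out as the main obstacle: the compression bound for $\Psi_n$. Your sketch lifts only the coordinate $h_1(g)$ to a short $h'\in H$ with $h'N'_n=h_1(g)N'_n$; writing $h'=h_1(g)u$ one only gets $u\in N'_n$, so $g_{\sigma_g(1)}h'=gu$ is a short representative of the coset $gN'_n$, \emph{not} of $gN_n$. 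This controls the distance in the Schreier quotient by $N'_n$, which says nothing about $\ell_{G/N_n}$.

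Worse, the estimate you need is false, not merely unproved: knowing that all coordinates $\overline{h_i(g)}$ are short in $H/N'_n$ (equivalently, that $g$ is uniformly close to every conjugate $g_iN'_ng_i^{-1}$) does not control the distance to their intersection uniformly in $n$. Take $G=\Z^2\rtimes\Z/2$ with $\Z/2$ swapping the coordinates, $H=\Z^2$, and $N'_m=m\Z\times(m+1)\Z$; these are finite-index normal subgroups of $H$ with trivial intersection and $\Box_{\NN'}H$ embeds since $H$ is amenable, so the hypotheses of the Corollary hold. The core is $N_m=m(m+1)\Z\times m(m+1)\Z$. For $g=(m+1,0)\in H$ one has $\sigma_g=\mathrm{id}$, $\overline{h_1(g)}=\overline{(m+1,0)}$ of quotient length $1$ (subtract $(m,0)\in N'_m$), and $\overline{h_2(g)}=\overline{(0,m+1)}=\overline{0}$, so $\ell_{W_m}(\Psi_m(\overline{g}))\le 1$; yet every element of $gN_m$ has first coordinate congruent to $m+1$ modulo $m(m+1)$, hence $\ell_{G/N_m}(\overline{g})\ge m+1\to\infty$. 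So the compression of $(\Psi_m)_m$ is not proper, and embeddability of $\sqcup_n W_n$ cannot be transported to $\sqcup_n G/N_n$ along $\Psi_n$: the strategy itself, not just the bookkeeping, breaks down. For comparison, the paper's proof avoids this issue entirely: it applies Proposition \ref{prop:prop5} to the $H$-equivariant isometric inclusion $(H,\ell|_H)\hookrightarrow (G,\ell)$ (a coarse equivalence since $[G:H]<\infty$) to compare $\Box_{\NN'}H$ directly with the corresponding sequence of quotients of $G$, and never passes through subgroups normalized by all of $G$.
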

\begin{proof}
Let's fix a proper length $\ell$ on $G$. The inclusion map $\text{Id}|_H: (H,\ell|_H)\rightarrow (G,\ell)$ is a H-equivariant isometry so Proposition \ref{prop:prop5} can apply (remember that, $\ell|_H$ is equivalent to any other proper length on $H$ by Proposition \ref{prop:prop4}).
\end{proof}

As in the classical case, we have heredity on the uniform embedding of box spaces.
\begin{Prop}\label{prop:prop6}
Let $G$ be a discrete residually finite group and $H\le G$ a subgroup of $G$.
Suppose there exists $\NN=(N_n)_n$ a sequence of finite index normal subgroups of $G$ with trivial intersection such that $\Box_\NN G$ embeds into a Hilbert space (or any other metric space), then $H$ also admits such a sequence.
\end{Prop}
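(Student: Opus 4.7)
\medskip
\noindent\textbf{Proof plan.} The plan is to take the natural candidate $N'_n:=N_n\cap H$ and show that $(N'_n)_n$ is a sequence of finite-index normal subgroups of $H$ with trivial intersection yielding an embeddable box space. The verifications of the three conditions are immediate: $[H:N_n\cap H]\le[G:N_n]<\infty$; for $h\in H$ and $x\in N'_n$, $hxh^{-1}\in N_n$ (since $N_n\lhd G$ and $h\in G$) and $hxh^{-1}\in H$, so $hxh^{-1}\in N_n\cap H=N'_n$, giving normality in $H$; and $\bigcap_n N'_n=H\cap\bigcap_nN_n=\{e\}$.

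The strategy is then to show the natural map $\iota:\Box_{\NN'}H\to\Box_\NN G$ induced fiberwise by $hN'_n\mapsto hN_n$ is a coarse embedding; composing with the given coarse embedding of $\Box_\NN G$ into its target metric space yields the conclusion. Fix a proper length $\ell$ on $G$; by Proposition \ref{prop:prop4} its restriction $\ell|_H$ is a proper length on $H$, and by Corollary \ref{cor:cor1} we are free to use these in defining the quotient lengths on both box spaces. Fiberwise, $\iota_n:H/N'_n\to G/N_n$ is well-defined and injective. Since $N'_n\subseteq N_n$, the infimum defining $d_{H/N'_n}$ is taken over a smaller set than the one defining $d_{G/N_n}$, giving $d_{G/N_n}(\iota_n\bar h_1,\iota_n\bar h_2)\le d_{H/N'_n}(\bar h_1,\bar h_2)$; thus $\iota_n$ is $1$-Lipschitz. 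Combined with the automatic fiber-separation condition of Definition \ref{def:coarse} (distinct fibers of $\Box_{\NN'}H$ map to distinct fibers of $\Box_\NN G$), this controls the dilatation.

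The hard part is the uniform compression bound: for each $R>0$, I need $R'>0$ independent of $n$ such that $d_{G/N_n}(\iota_n\bar h_1,\iota_n\bar h_2)\le R$ forces $d_{H/N'_n}(\bar h_1,\bar h_2)\le R'$. The issue is genuine: a short $G$-representative $h_1^{-1}h_2x$ of the $N_n$-coset (with $x\in N_n$) need not lie in $H$, and a priori the shortest $H$-representative of the corresponding coset $h_1^{-1}h_2 N'_n$ could be much longer. The idea is to exploit that $B_G(e,R)$ is a fixed finite set independent of $n$: the element $h_1^{-1}h_2x$ lies in this finite set, and using the $H$-coset structure of $G$ one argues that the $N_n$-cosets intersecting both $B_G(e,R)$ and $H$ admit an $H$-representative of length uniformly bounded in terms of $R$, $H$, $G$, and $\ell$ only. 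Properness of $\ell$ (and of $\ell|_H$) is the key ingredient here; once this uniform bound is established, the composition $F\circ\iota$ provides the desired coarse embedding of $\Box_{\NN'}H$.
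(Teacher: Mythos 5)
Your choice $N'_n=N_n\cap H$, the elementary verifications (finite index, normality in $H$, trivial intersection), and the observation that the natural fiberwise map $\iota_n:H/N'_n\to G/N_n$ is injective and $1$-Lipschitz all agree with the paper, whose one-line proof is exactly this map (asserted there to be an isometric embedding of $\Box_{\NN\cap H}H$ into $\Box_\NN G$). So you have correctly isolated the crux. The gap is that your "hard part" is never proved: the sentence "one argues that the $N_n$-cosets intersecting both $B_G(e,R)$ and $H$ admit an $H$-representative of length uniformly bounded in terms of $R$, $H$, $G$ and $\ell$ only" is precisely the statement requiring proof, and in fact it is false in general, so the route cannot be completed as written. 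Take $G=\F_2=\langle a,b\rangle$ with its word length, $H=\langle a\rangle$, and interleave two families of finite index normal subgroups: $N_{2k}$ any sequence with trivial intersection whose box space embeds (e.g. \cite{AGS}), and $N_{2k+1}=\ker\bigl(\F_2\to\Z/d_k\bigr)$ with $a\mapsto 1$, $b\mapsto m_k=\lfloor d_k/2\rfloor$, $d_k\to\infty$. The whole sequence has trivial intersection, and $\Box_\NN\F_2$ coarsely embeds: the odd-indexed quotients carry the quotient metrics of finite quotients of $\Z^2$, hence have property (A) with uniform constants by the F\o lner push-forward argument used in Lemma \ref{prop:prop11}, and the union of two uniformly embeddable families is uniformly embeddable by Lemma \ref{lem:lem1}. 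Yet for $h_k=a^{m_k}\in H$ one has $\overline{h_k}=\overline{b}$ in $G/N_{2k+1}$, so $d_{G/N_{2k+1}}(\bar e,\iota_n(\bar h_k))\le\ell(b)=1$, while $H\cap N_{2k+1}=\langle a^{d_k}\rangle$ gives $d_{H/N'_{2k+1}}(\bar e,\bar h_k)=\min_j|m_k+jd_k|\approx d_k/2\to\infty$. Hence no $R'$ independent of $n$ exists: $\iota$ is $1$-Lipschitz but its compression degenerates, and it is not a coarse embedding.

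Note that this does not refute the Proposition in that example ($H\cong\Z$ is amenable, so every box space of $H$ has property (A) and embeds); it only shows that the natural map $\Box_{\NN\cap H}H\to\Box_\NN G$ need not be a coarse embedding: the image $\sqcup_n HN_n/N_n$ with the subspace metric (which does embed, being a subspace of an embeddable space) need not be coarsely equivalent to the box-space metric on $\sqcup_n H/(H\cap N_n)$. This is exactly the point glossed over by the paper's assertion of an isometric embedding, so your instinct in flagging the compression as the genuine difficulty is sound; but as a proof the proposal stops at, and cannot be rescued by, the uniform bound you announce. A correct completion would need either an additional hypothesis forcing the cosets $gN_n$ meeting $H$ to meet it within uniformly bounded distance of their shortest $G$-representatives, or an entirely different choice of finite index normal subgroups of $H$.
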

\begin{proof}
$\NN\cap H=(H\cap N_n)_n$ defines a sequence of finite index normal subgroups of $H$ with trivial intersection. If we endow $H$ with the restriction of a given proper length $\ell$ on $G$ then $\Box_{\NN\cap H}H$ embeds isometrically into $\Box_\NN G$.
\end{proof}
\section{Finite quotients of a free product}\label{sec:sec4}
This section, devoted to the proof of Theorem \ref{thm:thm1}, is organized as follows:\\
We start by a succinct introduction to Bass-Serre theory \cite{Ser} that we need in the rest and which allows us to formulate and prove the Proposition \ref{prop:prop7bis}.\\
After that we present a process to obtain faithful sequences of finite quotients of a given free product of residually finite groups with an interesting decomposition property.
As mentioned in the outline, this construction and the sequences that it produces are in the heart of the proof of Theorem \ref{thm:thm1}.\\
In Subsection \ref{sub:sub41} we introduce gluing techniques and give a reduction of Theorem \ref{thm:thm1} to the uniform embedding of a certain sequence of large girth subgroups endowed with particular metrics. This is Theorem \ref{thm:thm3}.\\
We finish Theorem \ref{thm:thm1} proof in Subsection \ref{sub:sub42} which is dedicated to the study of this large girth sequence.
In this subsection we introduce direct limit formalism for sequence of groups and approximate our sequence (cf. Proposition \ref{prop:prop8}) with another natural metric structure on which we have a better grasp.\\

Let $A$, $B$ be two discrete countable groups and $G=A\star B$ their free product. 
We can associate to $G$ a tree, called \textbf{Bass-Serre tree} \cite{Ser}, $T$ given by:
\begin{gather*}
V(T)=G/A\cup G/B\\
E(T)=G
\end{gather*}
with the endpoint maps, the projections $G\rightarrow G/A$ and $G\rightarrow G/B$.

The action of $G$ on its cosets is combinatorial with respect to $T$ (i.e it respects the adjacency) and defines an isometric action on $T$ which has two orbits and stabilizers equal to a conjugate of $A$ or $B$ depending on the family of cosets we consider.\\
We can deduce from this a simple criterion to characterize normal subgroups of $G$ which act freely on the Bass-Serre tree $T$: a normal subgroup $H\unlhd G$ acts freely on $T$ if and only if $H$ has trivial intersection with $A$ and $B$.\\

The following lemma is a \textit{normal form} type result on quotients of a free product:
\begin{Lem}\label{lem:d}
Let $A$ and $B$ be two discrete countable groups and $G=A\star B$ their free product.\\
Given a quotient $Q$ of $G$, $A$ (respectively, $B$) factorizes through a quotient, denoted $\overline{A}$ (respectively, $\overline{B}$), such that $\overline{A}$ and $\overline{B}$ are subgroups of $Q$ and :
$$Q\simeq\overline{A}\star\overline{B}/F$$
where $F\unlhd\overline{A}\star\overline{B}$ is a free normal subgroup.\\
Furthermore if $Q$ is finite then $\overline{A}$, $\overline{B}$ and $\text{rk}(F)$ are finite. 
\end{Lem}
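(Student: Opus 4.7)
The plan is: produce $\overline{A}, \overline{B}$ as the images $\pi(A), \pi(B) \subseteq Q$ under the quotient map $\pi : G \twoheadrightarrow Q$, assemble them via the universal property of the free product into a surjection $\psi : \overline{A} \star \overline{B} \twoheadrightarrow Q$, and then invoke the Bass--Serre free-action-on-a-tree criterion recalled just before the statement to show that $F := \ker \psi$ is free.

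First, setting $\overline{A} = \pi(A)$ and $\overline{B} = \pi(B)$, the restrictions $\pi|_A : A \to \overline{A}$ and $\pi|_B : B \to \overline{B}$ give the claimed factorizations of $A$ and $B$ through subgroups of $Q$. The inclusions $\overline{A} \hookrightarrow Q$ and $\overline{B} \hookrightarrow Q$ then induce the homomorphism $\psi$, which is surjective because $Q = \pi(G)$ is generated by $\pi(A) \cup \pi(B) = \overline{A} \cup \overline{B}$. This immediately yields the presentation $Q \simeq \overline{A} \star \overline{B} / F$ with $F = \ker \psi$.

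The substantive step is showing that $F$ is free. Here I would observe that $\psi$ restricts to the identity on each of the factors $\overline{A}$ and $\overline{B}$ of $\overline{A} \star \overline{B}$ (mapping each factor onto the namesake subgroup of $Q$), and is therefore injective on each factor. Hence $F \cap \overline{A} = F \cap \overline{B} = \{e\}$, so by the Bass--Serre criterion recalled above, $F$ acts freely on the Bass--Serre tree $T$ of $\overline{A} \star \overline{B}$; the classical theorem that a group acting freely on a tree is free then gives that $F$ is free. This appeal to Bass--Serre theory is where all the real content lives; the rest is formal.

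For the finiteness statement, if $Q$ is finite then $\overline{A}$ and $\overline{B}$ are finite as subgroups of $Q$, and $F$ has finite index $|Q|$ in $\overline{A} \star \overline{B}$, so the quotient graph $T/F$ is finite and $\text{rk}(F) = 1 - \chi(T/F) < \infty$. A short $F$-orbit count of the edges and of the two vertex orbits of $T$ even gives the explicit value $\text{rk}(F) = |Q| - |Q|/|\overline{A}| - |Q|/|\overline{B}| + 1$, but for the lemma only finiteness is needed.
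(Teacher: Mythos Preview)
Your argument is correct and follows essentially the same route as the paper: define $\overline{A},\overline{B}$ as the images (equivalently $A/\ker(\pi|_A)$, $B/\ker(\pi|_B)$), factor through $\overline{A}\star\overline{B}$, observe that the induced map is injective on each factor so $F$ meets neither, and apply the Bass--Serre free-action criterion plus cocompactness for the finite-rank claim. Your explicit rank formula via the Euler characteristic of $T/F$ is a small bonus the paper does not include.
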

\begin{proof}
Let $\pi: G\rightarrow Q$ be the canonical projection. 
If we choose $\overline{A}=A/{\ker(\pi|_B)}$ and $\overline{B}=B/{\ker(\pi|_B)}$, $\pi$ factor through the following diagram:
$$\xymatrix{
G\ar[r]^\pi\ar@{->>}[d]&
Q\\
\overline{A}\star\overline{B}\ar[ru]^{\overline{\pi}}
}$$
Moreover, since $\overline{\pi}|_{\overline{A}}$ (respectively, $\overline{\pi}|_{\overline{B}}$) has trivial kernel by construction, $\overline{A}$ and $\overline{B}$ are subgroups of $Q$. 
In particular, if $Q$ is finite then $\overline{A}$ and $\overline{B}$ are finite.\\
The normal subgroup $F=\ker(\overline{\pi})$ of $\overline{G}=\overline{A}\star\overline{B}$ does not intersect $\overline{A}$ and $\overline{B}$.
So $F$ acts freely on the Bass-Serre tree $\overline{T}$ of $\overline{G}$ and this implies that $F$ is a free group \cite{Ser}. 
Moreover if $Q$ is finite the action of $F$ on $\overline{T}$ is cocompact. Since $\overline{T}$ has bounded geometry (the geometry of $\overline{T}$ is controlled by the cardinality of $\overline{A}$ and $\overline{B}$ which are finite), these two facts  imply that $F$ has finite rank \cite{Ser}.
\end{proof}

Let $Y$ be a graph. 
It is well known that any maximal tree subgraph of $Y$ contains every vertices of $Y$.\\
Given a group $G$ which acts simplicialy on $X$, a \textbf{tree representation} of $X/G$ is a connected subgraph $X^*$ of $X$ which is mapped isomorphically by the projection to a maximal tree subgraph of $X/G$.
Note that such a representation always exists.

\begin{Prop}\label{prop:prop7bis}
Let $A$ and $B$ be two discrete countable groups and $G=A\star B$ their free product. Let $D$ be the kernel of the morphism $\pi: G\rightarrow A\times B$.
Then $D$ has basis $\BB=\{[a,b]\mid a\in A,b\in B; a,b\neq1\}$ and there exist constants $C,\lambda>0$ independent of $A$ and $B$ such that $(D,\BB)$ is $(C,\lambda)$-quasi-isometric to the Bass-Serre tree associated to $G$.
\end{Prop}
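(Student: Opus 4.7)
The plan is to apply Bass--Serre theory to the $G$-action on the Bass--Serre tree $T$. Because the map $\pi\colon G\to A\times B$ restricts injectively to each free factor, $D\cap A=D\cap B=\{1\}$, so $D$ acts freely on $T$ and is therefore a free group. A direct orbit count using $G/D\cong A\times B$ gives $|A|+|B|$ vertex $D$-orbits and $|A|\cdot|B|$ edge $D$-orbits in $T/D$, so the first Betti number of $T/D$ is $(|A|-1)(|B|-1)=|\BB|$, which matches the predicted rank and is a sanity check.

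To identify $\BB$ as an explicit free basis I would fix a tree representation $T^*\subset T$ in the form of a ``double star'': take the vertex $A$, the edges $\{a:a\in A\}$ (each joining $A$ to $aB$) and the edges $\{b:b\in B\setminus\{1\}\}$ (each joining $B=1\cdot B$ to $bA$). A short calculation with $\pi$ shows that these $|A|+|B|$ vertices and $|A|+|B|-1$ edges lie in pairwise distinct $D$-orbits, so $T^*$ projects isomorphically onto a spanning tree of $T/D$. The remaining edge $D$-orbits are then naturally parametrized by pairs $(a,b)\in(A\setminus\{1\})\times(B\setminus\{1\})$, with representative edge $ab\in T$. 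Lifting the associated $T/D$-loop $[A]\to[aB]\to[bA]\to[B]\to[A]$ back to $T$ starting at $A$ produces the path
$$A\,\longrightarrow\,aB\,\longrightarrow\,abA\,\longrightarrow\,aba^{-1}B\,\longrightarrow\,aba^{-1}b^{-1}A\;=\;[a,b]\cdot A,$$
so the Bass--Serre basis element associated to the orbit $(a,b)$ is exactly $[a,b]$.

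For the quasi-isometry I use the orbit map $\phi\colon D\to T$, $d\mapsto d\cdot A$. The upper bound $d_T(A,d\cdot A)\le 4|d|_{\BB}$ is immediate from the triangle inequality, since the path above is a geodesic of length $4$ (the word $aba^{-1}b^{-1}$ is already in reduced free-product normal form). For the lower bound I would contract each translate $d\cdot T^*$ in $T$ to a single vertex labelled $d$: each extra edge in the $D$-orbit of $ab$ joins $aB\in T^*$ to $abA=[a,b]\cdot bA\in[a,b]\cdot T^*$, so after contraction the resulting graph is precisely the Cayley graph of $(D,\BB)$, and since contraction is distance-non-increasing, $|d|_{\BB}=d_{\mathrm{Cay}(D,\BB)}(1,d)\le d_T(A,d\cdot A)$. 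Coarse density of $\phi(D)$ in $T$ follows from $\mathrm{diam}(T^*)\le 3$ (attained between a leaf $aB$ with $a\neq 1$ and a leaf $bA$ with $b\neq 1$). Combining these estimates yields a $(C,4)$-quasi-isometry with $C$ depending only on $\mathrm{diam}(T^*)$ and hence independent of $|A|$ and $|B|$. The main technical step is the $\pi$-bookkeeping verifying that $T^*$ meets each vertex and each tree-edge orbit exactly once; once that is in hand, both the basis identification and the two distance estimates fall out mechanically from the fact that collapsing a subtree of a tree yields a tree.
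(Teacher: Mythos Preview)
Your argument is correct and follows essentially the same route as the paper: the same tree representation $T^*$ (the paper's $T_r$), the same orbit map $d\mapsto d\cdot A$, the same upper bound $d_T\le 4|d|_\BB$ via the triangle inequality, and the same lower bound, which the paper phrases as tracking which $D$-translate of $T_r$ each vertex of a geodesic lies in, while you package it as collapsing the translates $d\cdot T^*$ to obtain the Cayley graph of $(D,\BB)$. The loop-lifting you spell out is exactly the ``direct computation'' the paper invokes when applying Serre's basis criterion.
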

\begin{proof}
An orientation on the Bass-Serre tree $T$ of $G$ which is preserved by the action corresponds to a choice of an orientation of the quotient $G\backslash T$. 
In our case there is only two choices and we assume here that oriented edges on $T$ goes from $B$-cosets to $A$-cosets.\\
The subgroup $D$ is normal and does not intersect $A$ and $B$, it follows that $D$ acts freely on the tree $T$ and so is a free group \cite{Ser}.\\
Let $T'=D\backslash T$ be the quotient $T$ by the action of $D$, then $T'$ is a complete bipartite graph given by:
$$V(T')={(A\times B)}/A\sqcup {(A\times B)}/B$$
$$E(T')=A\times B$$
with endpoint maps given by the quotient maps on, respectively, $A$ and $B$.\\
In particular a tree representation of $T'$ is given by the subtree $T_r$ of $T$ with:
$$V(T_r)=\{aB; a\in A\}\sqcup\{bA; b\in B\}\subset V(T)$$
$$E(T_r)=A\cup B\subset G=E(T)$$
Let $\partial_V T_r^+$ be the set of vertices of $T\setminus T_r$ which are target of a edge with source a vertices of $T_r$. Then the set $\BB=\{g'\neq 1\mid g'\in D, g'.T_r\cap\partial_V T_r^+\neq\emptyset \}$ which is, by direct computation, equal to $\{[a,b]; a\in A,b\in B\}$ forms a basis of the free group $D$ (this geometric criterion can be found Section 3 \cite{Ser}). 
It appears that the rank of $D$ is finite when $A$ and $B$ are finite.\\

Since $G$ acts isometrically, to prove the last part of the proposition it is enough to prove the following inequalities:
\begin{equation}\label{eq:eq3}
\frac{1}{4}.d_T(g.A,A)\le \ell(g)\le d_T(g.A,A)+1
\end{equation}
with $g\in D$. 
These will imply that $f:D\rightarrow T$ with $f(g)=g.A$ is a $(4,1)$-quasi-embedding but $T'$ has diameter equal to 2, in other words, $f(G)$ is $2$-dense and by consequence defines a $(4,1)$-quasi-isometry.

Since $d_T(s.A,A)=4$ for any $s\in \SSS=\BB\cup\BB^{-1}$, given $g=s_0\dots s_n\in D$ a reduced word with $s_i\in\SSS$ and $n\ge1$, we have :
$$d(g.A,A)\le \sum_{i=0}^{n-1} d(s_0\dots s_i.A, s_0\dots s_{i+1}.A)=\sum_{i=0}^{n-1} d(A, s_{i+1}.A)=4.\ell(g)$$
This prove the left side of the inequalities (\ref{eq:eq3}).\\
For the right side, let $g\in D$ and $x_0=A,\dots,x_n=g.A$ a sequence of vertices which describes the geodesic path in $T$ between $A$ and $gA$.
Since $D.T_r=T$, we can find $g_0=e, g_1,\dots, g_n=g$ such that $x_i\in g_i.T_r$ for any $i=0,\dots,n$. 
If we denote $s_0=g_1$ and $s_i=g_i^{-1}g_{i+1}$ for $1\le i\le n-1$, then $g=s_0.\dots.s_{n-1}$ and:
$$d_T(s_iT_r,T_r)=d_T(g_{i+1}.T_r,g_i.T_r)\le d(x_{i+1},x_i)=1$$
so $s_i\in\SSS$ for any $i$.\\
Indeed, if we denote $\partial_V T_r^-$ the analogue of $\partial_V T_r^+$ for the opposite orientation on $T$.
An element $g'\in D$ belongs to $\SSS$ if and only if $g'.T_r\cap\partial_V T_r^+\neq\emptyset$ or $g'.T_r\cap\partial_V T_r^-\neq\emptyset$.
A consequence is that $d(g'T_r,T_r)\ge2$ whenever $g'\in D\setminus\SSS$. 
It follows that $\ell(g)\le n$.
\end{proof}

Let us consider $A$, $B$ two discrete countable groups as above and $\overline{A}$, $\overline{B}$ two quotients of respectively $A$ and $B$. 
Let $G=A\star B$ (respectively, $\overline{G}=\overline{A}\star\overline{B}$) their free product. 
We have the following commutative diagram:
$$\xymatrix{
1\ar[r]&
D\ar[r]\ar[d]&
G\ar[r]\ar[d]&
A\times B\ar[r]\ar[d]&
1\\
1\ar[r]&
D'\ar[r]\ar[d]&
\overline{G}\ar[r]\ar[d]&
\overline{A}\times \overline{B}\ar[r]\ar[d]&
1\\
&1&1&1&\\
}$$
Note that, the basis $\BB$ of $D$ described in Proposition \ref{prop:prop7bis} is sent to the basis $\BB'$ of $D'$ defined analogously by the quotient map.\\
In the rest of the text we will talk about \textbf{commutator subgroups} to designate kernel's of type $D^{(\cdot)}$.\\

Let $\Gamma$ be a residually finite group.
Recall that a sequence of finite quotients of $\Gamma$ is called \textbf{faithful} if any non-trivial element $g\in \Gamma$ acts non-trivially by left translation on all but a finite number of these quotients.\\
Now, if we assume $A$ and $B$ residually finite, it is well know that their free product $G=A\star B$ is residually finite \cite{BT} (the construction described in that paper is recall and used in the next section).\\
Here we propose another construction.
Let $(\overline{A}_n)_n$ and $(\overline{B}_n)_n$ be two faithful sequence of finite quotients of respectively $A$ and $B$. 
We denote $G_n=\overline{A}_n\star\overline{B}_n$ and $D_n$ the commutator subgroup of $G_n$ which has finite rank since $\overline{A}_n$ and $\overline{B}_n$ are finite as explained above.
Let $(M_{(n,k)})_k$ be a sequence of finite index characteristic subgroups of $D_n$, i.e $M_{(n,k)}$ is normalized by each automorphism of $D_n$.
Moreover we assume that the subgroups $(M_{(n,k)})_k$ have trivial intersection and that $M_{(n,k+1)}$ is a proper characteristic subgroup of $M_{(n,k)}$.\\
For example we can take $M_{(n,k+1)}=\langle \{w^s\mid w\in M_{(n,k)}\}\cup\{[w,w']\mid w,w'\in M_{(n,k)}\}\rangle$ with $s\ge 2$ fixed integer. 
Since $D_n$ has finite rank it is not hard to see that $M_{(n,k)}$ has finite index in $D_n$ and Proposition $3.3$ of \cite{LS} ensures that the intersection is trivial.
\begin{Prop}\label{prop:prop8bis}
The sequence of finite quotients $(G_n/{M_{(n,k)}})_{n,k\ge n}$ of $G$ is faithful.
\end{Prop}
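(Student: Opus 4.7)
The plan is to fix a non-trivial $g\in G$ and show that only finitely many pairs $(n,k)$ with $k\ge n$ satisfy $\pi_n(g)\in M_{(n,k)}$, where $\pi_n\colon G=A\star B\to G_n=\overline{A}_n\star\overline{B}_n$ is the surjection induced by the factor quotient maps. This is exactly the condition for $g$ to act trivially on $G_n/M_{(n,k)}$, since each characteristic subgroup $M_{(n,k)}\unlhd D_n\unlhd G_n$ is automatically normal in $G_n$.

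First, I would establish that $\pi_n(g)\ne e$ in $G_n$ for all $n$ sufficiently large. Writing $g=s_1\cdots s_r$ in reduced normal form with the $s_i$ alternately in $A\setminus\{e\}$ and $B\setminus\{e\}$, the faithfulness of $(\overline{A}_n)_n$ and $(\overline{B}_n)_n$ forces each of the finitely many letters $s_i$ to retain a non-trivial image in $\overline{A}_n$ or $\overline{B}_n$ once $n\ge N_0(g)$; uniqueness of reduced form in the free product $G_n$ then gives $\pi_n(g)\ne e$ for $n\ge N_0$.

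Next, I would split cases along the exact sequence $1\to D\to G\to A\times B\to 1$. If $g\notin D$, the image of $g$ in $A\times B$ is non-trivial and, by componentwise faithfulness, remains non-trivial in $\overline{A}_n\times\overline{B}_n$ for $n\ge N_1\ge N_0$; consequently $\pi_n(g)\notin D_n\supseteq M_{(n,k)}$, so no pair $(n,k)$ with $n\ge N_1$ is bad. If $g\in D$, I would use that $\pi_n|_D\colon D\to D_n$ is surjective (a direct diagram chase on the two comparable extensions), together with the explicit verbal structure $M_{(n,k+1)}=\langle w^s,[w,w']:w,w'\in M_{(n,k)}\rangle$: letting $M_k(D)$ denote the analogous verbal filtration on the ambient free group $D$ (whose intersection is trivial by Proposition~3.3 of \cite{LS}, so $g$ has a well-defined finite depth $K(g)$), the preservation of verbal subgroups under surjections gives $\pi_n(M_k(D))=M_{(n,k)}$. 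For $n$ large enough that the substitution $[a,b]\mapsto[\bar a,\bar b]$ on the finitely many basis letters occurring in a reduced expression of $g$ is injective, the depth of $\pi_n(g)$ in $(M_{(n,k)})_k$ matches $K(g)$. So for $n\ge\max(N_1,K(g)+1)$ the bound $k\ge n>K(g)$ immediately gives $\pi_n(g)\notin M_{(n,k)}$, and the remaining finitely many $n$ with $\pi_n(g)\ne e$ contribute only finitely many bad $k$'s via the trivial intersection $\bigcap_k M_{(n,k)}=\{e\}$.

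The main obstacle will be the uniform-in-$n$ depth control in the last case: without the compatibility $\pi_n(M_k(D))=M_{(n,k)}$ one cannot rule out bad $k$'s accumulating as $n\to\infty$, since each fiberwise intersection $\bigcap_k M_{(n,k)}=\{e\}$ only gives a bound that might grow with $n$. For the verbal construction singled out in the excerpt this uniformity is automatic from the stability of verbal subgroups under surjections; for a general characteristic filtration $(M_{(n,k)})_k$ the delicate step would be to produce an ambient filtration on $D$ whose image under $\pi_n|_D$ recovers $M_{(n,k)}$.
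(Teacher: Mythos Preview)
Your case split along $g\in D$ versus $g\notin D$ and the treatment of the latter are fine, and for the specific verbal filtration your depth-matching argument can be made to work (the missing justification is that the free factor of $D$ generated by the basis letters occurring in $g$ retracts off both $D$ and, for large $n$, off $D_n$, so verbal depth is preserved under $\pi_n$). The gap is one of scope: the proposition is stated for an \emph{arbitrary} chain $(M_{(n,k)})_k$ with $M_{(n,k+1)}$ proper and characteristic in $M_{(n,k)}$; the verbal sequence is offered only as an example. Your argument genuinely needs the identification $\pi_n(M_k(D))=M_{(n,k)}$, and as you yourself observe at the end, there is no candidate ambient filtration on $D$ to supply this in general. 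So as written you have proved the example, not the proposition.

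The paper avoids this difficulty altogether. Rather than tracking depth through a filtration on $D$, it invokes only a word-length bound: Proposition~3.2 of \cite{LS} says that in any strictly descending chain of characteristic subgroups of a free group, every non-trivial element of the $k$-th term has basis word length at least $k$. Since the commutator basis $\BB$ of $D$ from Proposition~\ref{prop:prop7bis} is mapped by $\pi_n$ onto the corresponding basis $\BB_n$ of $D_n$, one has $d_{\BB_n}(e,\pi_n(g))\le d_{\BB}(e,g)$ once $n$ is large enough that the letters of $g$ survive. Hence $\pi_n(g)\notin M_{(n,k)}$ whenever $k>d_{\BB}(e,g)$, uniformly in $n$, with no reference to any filtration on $D$ and no verbal hypothesis. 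This is both shorter and strictly more general than the depth-matching route; the uniform-in-$n$ control you flagged as the main obstacle comes for free from the single inequality $d_{\BB_n}(e,M_{(n,k)})\ge k$.
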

\begin{proof}
Let $E$ be a finite symmetric set of $G$ with $e\in E$.
Let $E'=E^{(2)}=\{x.y\mid x,y\in E\}\subset G$ and $E_A\subset A$ (respectively, $E_B\subset B$) be the set of letters in $A$ (respectively, $B$) which appears in the normal form of the elements of $E'$.
Since $E$ is supposed to be finite, $E'$, $E_A$ and $E_B$ are finite.
In the other hand $A$ and $B$ are supposed to be residually finite so there exists $N_0$ such that $E_A$ (respectively, $E_B$) is sent bijectively into $\overline{A}_n$ (respectively, $\overline{B}_n$) for any $n\ge N_0$, in particular $E'$ is sent bijectively into $G_n$ for $n\ge N_0$.\\
If we denote $D$ the commutator subgroup of $G$ then $D\cap E'\neq 1$ implies $D_n\cap \overline{E'}\neq 1$ (where $\overline{E'}=\pi_n(E)$ and $\pi_n:G\rightarrow G_n$ is the quotient map) for any $n\ge N_0$.
Moreover, $d_\BB(e,D\cap E')\ge d_{\BB_n}(e,D_n\cap\overline{E'})$ where $\BB$ (respectively, $\BB_n$) denote the basis of Proposition \ref{prop:prop7bis} and $d_{(*)}$ the word distance associated.
Since the sequence $(M_{(n,k)})_k$ is such that $M_{(n,k+1)}$ is proper and characteristic inside of $M_{(n,k)}$ Proposition $3.2$ of \cite{LS} ensures that $d_{\BB_n}(M_{(n,k)},e)\ge k$ in $D_n$.
So we have that for any $k\ge n\ge\max\{N_0, d_\BB(e,D\cap E')+1\}$ each elements of $E'$ are sent non-trivially into the finite quotients $G_n/{M_{(n,k)}}$.\\
Assume there exist $x,y\in E$, $x\neq y$ with same image into $G_n/{M_{(n,k)}}$, then $x.y^{-1}\in E'$ with $x.y^{-1}\neq e$ is sent trivially into $G_n/{M_{(n,k)}}$ which is a contradiction. So $E$ is sent bijectively into $G_n/{M_{(n,k)}}$ for any $k\ge n\ge\max\{N_0,d_\BB(e,D\cap E')+1\}$.
\end{proof}

In order to prove Theorem \ref{thm:thm1} with a sequence of the above type it seems that we must find suitable $(\overline{A}_n)_n$, $(\overline{B}_n)_n$ and $(M_{(n,k)})_k$'s.
As it is explained in the next subsection, if we assume $A$ and $B$ amenable the choice of $(\overline{A}_n)_n$ and $(\overline{B}_n)_n$ is not matter.

\subsection{Geometry of metric spaces and gluing techniques}\label{sub:sub41}
In this subsection we develop gluing techniques that allow us, under the assumption that $A$ and $B$ are amenable, to reduce the uniform embedding of the sequence $(G_n/{M_{(n,k)}})_{n,k\ge n}$ introduced above to that of the large girth sequence of subgroups $(D_n/{M_{(n,k)}})_{n,k\ge n}$ endowed with the restricted metric structure.\\

Let us first introduce a variant of property (A) that fits better with our purpose:
\begin{Def}\label{def:def10}
A sequence $\XX=(X_n)_n$ of metric spaces is equi-exact or uniformly exact if for any $R>0$, $\varepsilon>0$ and $n\ge0$, there exists a partition of unity $(\varphi_{(n,i)})_{i\in I_n}$ of $X_n$ subordinated to a cover $\UU_n=(U_{(n,i)})_{i\in I_n}$, i.e. $\{\varphi_{(n,i)}\neq0\}\subset U_{(n,i)}$, such that:
\begin{description}
\item{$(1)$} for any $n\ge1$ and $x,y\in X_n$, $d_{X_n}(x,y)\le R$ implies $\sum_{i\in I_n}|\varphi_{(n,i)}(x)-\varphi_{(n,i)}(y)|\le\varepsilon$.
\item{$(2)$} the cover $\UU=\{\UU_n\}_n$ of $\XX$ is uniformly bounded, i.e there exists $C>0$ such that $diam(U_{(n,i)})\le C$ for any $n\ge 0$ and $i\in I_n$.
\end{description}
\end{Def}

\begin{Lem}\label{prop:prop11}
Let $G$ be a discrete residually finite group, $\ell$ a proper length on $G$ and $\NN=(N_n)_n$ a sequence of finite index normal subgroups with trivial intersection.
If $G$ is amenable, then the sequence of metric spaces $\XX=(X_n)_n$, where $X_n=G/{N_n}$ is endowed with the quotient metric induced by $\ell$, forms a equi-exact sequence.
\end{Lem}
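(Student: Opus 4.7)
The plan is to deduce equi-exactness of the quotients $X_n=G/N_n$ from Reiter's reformulation of amenability, by transporting a single finitely supported non-negative Reiter function $\eta$ on $G$ simultaneously to every $X_n$ using normality of each $N_n$. Given $R>0$ and $\varepsilon>0$, I first use amenability of $G$ together with properness of $\ell$ to produce $\eta\in\ell^1(G)$ with $\eta\ge 0$, $\|\eta\|_1=1$, $\Supp(\eta)\subset B_\ell(e,S)$ for some $S=S(R,\varepsilon)$ independent of $n$, and $\|\lambda(g)\eta-\eta\|_1<\varepsilon$ for every $g\in B_\ell(e,R)$. Reiter's condition follows from Definition \ref{def:def3} via $\xi\mapsto|\xi|^{2}$, and the finite support is obtained by truncation followed by renormalisation.

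Next, for each $n$ I fix a system of representatives $\{\tilde x:x\in X_n\}\subset G$ and define
$$
\psi_{(n,x)}(y)\;=\;\sum_{h\in N_n}\eta(\tilde y\,h\,\tilde x^{-1}),\qquad y\in X_n.
$$
Normality of $N_n$ makes the sum independent of the choice of representatives, so $\psi_{(n,x)}$ is well defined and non-negative; since any non-vanishing term forces $\ell(\tilde y h\tilde x^{-1})\le S$, one has $\Supp(\psi_{(n,x)})\subset B_{X_n}(x,S)$, giving a cover of $X_n$ by sets of diameter at most $2S$, uniformly in $n$.

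The combinatorial heart is the observation that, for fixed $y\in X_n$, the map $(x,h)\mapsto \tilde y\,h\,\tilde x^{-1}$ defines a bijection $X_n\times N_n\to G$: for each $x$ it traces out the coset $N_n\,\tilde y\,\tilde x^{-1}$ (using normality), distinct $x$'s yield distinct cosets, and together they exhaust $G$. This bijection immediately gives
$$
\sum_{x\in X_n}\psi_{(n,x)}(y)\;=\;\sum_{g\in G}\eta(g)\;=\;1,
$$
so $(\psi_{(n,x)})_{x\in X_n}$ is a partition of unity subordinate to the uniformly bounded cover $(B_{X_n}(x,S))_{x\in X_n}$.

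Finally, for the variation axiom, assume $d_{X_n}(y,y')\le R$ and pick $h_0\in N_n$ with $u:=\tilde y^{-1}\tilde y'\,h_0\in B_\ell(e,R)$. Reindexing the sum defining $\psi_{(n,x)}(y')$ by $h\mapsto h_0 h$ rewrites it as $\sum_{h}\eta(\tilde y\,u\,h\,\tilde x^{-1})$, and applying the same bijection as above yields
$$
\sum_{x\in X_n}\big|\psi_{(n,x)}(y)-\psi_{(n,x)}(y')\big|\;\le\;\sum_{x,h}\big|\eta(\tilde y h\tilde x^{-1})-\eta(\tilde y u h\tilde x^{-1})\big|\;=\;\|\eta-\lambda(u^{-1})\eta\|_1\;<\;\varepsilon.
$$
The only conceptual obstacle is arranging for a single Reiter function on $G$ to descend coherently to every $N_n$ at once with bounds independent of $n$; normality supplies exactly the symmetry needed to make the $(x,h)\leftrightarrow g$ bijection work uniformly, so no dependence on $n$ enters the estimates.
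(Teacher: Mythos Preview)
Your approach is sound in spirit but contains a genuine error coming from a left/right mix-up, and this breaks both the uniform-boundedness of the cover and the variation estimate.

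Take the support claim first. From $\eta(\tilde y h\tilde x^{-1})\neq 0$ you only get $\ell(\tilde y h\tilde x^{-1})\le S$, which after using normality gives $\ell_Q(yx^{-1})\le S$, \emph{not} $\ell_Q(x^{-1}y)\le S$. In other words $U_{(n,x)}\subset B_Q(e,S)\cdot x$, a right translate of a ball, and for the \emph{left}-invariant metric the diameter of such a set is $\sup_{b_1,b_2\in B_Q(e,S)}\ell_Q(x^{-1}b_1^{-1}b_2 x)$, which involves conjugation by $x$ and is not uniformly bounded. Concretely, in $G=\F_2=\langle a,b\rangle$ with the word length, if $a,a^{-1}\in\Supp(\eta)$ and $x=\overline{b^k}$ in a quotient of large girth, then $\overline{ab^k}$ and $\overline{a^{-1}b^k}$ both lie in $U_{(n,x)}$ while $d_{X_n}(\overline{ab^k},\overline{a^{-1}b^k})=\ell_Q(\overline{b^{-k}a^{-2}b^k})\sim 2k$. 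So the cover is not uniformly bounded.

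The same confusion appears in the variation step. With your bijection $g=\tilde y h\tilde x^{-1}$ one has $\tilde y u h\tilde x^{-1}=(\tilde y u\tilde y^{-1})g$, so the sum you write is $\|\eta-\lambda(\tilde y u^{-1}\tilde y^{-1})\eta\|_1$, not $\|\eta-\lambda(u^{-1})\eta\|_1$; and $\ell(\tilde y u\tilde y^{-1})$ is not controlled by $\ell(u)\le R$. The fix is simple: use $\psi_{(n,x)}(y)=\sum_{h\in N_n}\eta(\tilde x^{-1}\tilde y h)$ (equivalently $\sum_h\eta(h\tilde x^{-1}\tilde y)$ by normality). Then the support genuinely lies in $B_{X_n}(x,S)$, and choosing a lift $\tilde u$ of $y^{-1}y'$ with $\ell(\tilde u)\le R$ the variation sum becomes $\sum_g|\eta(g)-\eta(g\tilde u)|=\|\eta-\rho(\tilde u)\eta\|_1$, which is controlled once you take $\eta$ almost invariant under \emph{right} translation by $B_\ell(e,R)$ --- equally available from amenability.

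For comparison, the paper does not build the partition of unity by hand: it quotes Roe's result that amenability of $G$ forces the whole box space $\Box_\NN G$ to have property~(A), and then simply squares the coordinates of the resulting $\ell^2$-valued map to obtain the required partitions of unity on each $X_n$. Your direct construction is more explicit and self-contained once the left/right issue is corrected, while the paper's argument is shorter but relies on an external black box.
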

\begin{proof}
Since $G$ is amenable by proposition 11.39 \cite{Roe} $\Box_\NN G$ has property (A). 
Let $R>0$, $\e>0$ be given. 
Let $\xi=(\xi_i)_{i\in I}: \Box_\NN G\rightarrow\ell^2(I)$ with $M>0$ as in Definition \ref{def:def2}.\\
Define $\varphi_{(n,i)}: X_n\rightarrow \R_+$ by $\varphi_{(n,i)}=(\xi_i|_{X_n})^2$ and $U_{(n,i)}=\{\varphi_{(n,i)}\neq0\}$. 
Since $\text{supp($\xi(x)$)}\subset B(x,M)$, $U_{(n,i)}\subset B(x,M)$ and then $\{\{U_{(n,i)}\}_{i\in I_n}\}_n$ forms an uniformly bounded cover of $\XX$.
Moreover $\sum_{i\in I_n}\varphi_{(n,i)}(x)=\|\xi(x)\|^2=1$ and for any $x,y\in X_n$ such that $d_{X_n}(x,y)=d_{\Box_\NN G}(x,y)\le R$:
$$\sum_{i\in I_n}|\varphi_{(n,i)}(x)-\varphi_{(n,i)}(y)|\le\|\xi(x)-\xi(y)\|\le \e.$$
\end{proof}

\begin{Prop}\label{prop:prop13}
Let $\XX=(X_n)_n$ be a sequence of metric spaces. Assume that for any $R>0$, $\e>0$ and $n\ge 1$, there exists a partition of unity $(\varphi_{(n,i)})_{i\in I_n}$ of $X_n$ such that:
\begin{description}
\item{$(1)$} for any $x,y\in X_n$, $d_{X_n}(x,y)\le R$ implies $\sum_{i\in I_n}|\varphi_{(n,i)}(x)-\varphi_{(n,i)}(y)|\le\e$.
\item{$(2)$} the cover $\UU=\{U_{(n,i)}\}_{n,i\in I_n}$ with $U_{(n,i)}=\{\varphi_{(n,i)}\neq0\}$ of $\XX$ is uniformly embeddable into a Hilbert space.
\end{description}
Then $\XX$ is uniformly embeddable.
\end{Prop}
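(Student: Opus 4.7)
I would verify criterion (3) of Lemma~\ref{lem:lem1}: given $R>0$ and $\e>0$, build for each $n$ a unit-vector map $\Phi_n:X_n\to\mathcal{K}_n$ into some Hilbert space with $\|\Phi_n(x)-\Phi_n(y)\|\le\e$ whenever $d_{X_n}(x,y)\le R$, and with $|\langle\Phi_n(x),\Phi_n(y)\rangle|$ uniformly small once $d_{X_n}(x,y)$ is large. By the equivalence in Lemma~\ref{lem:lem1} this yields uniform embeddability of $\XX$.

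Fix a small $\delta>0$ (to be tuned in terms of $\e$). By hypothesis (1), pick a partition of unity $(\varphi_{(n,i)})_{i\in I_n}$ of $X_n$ satisfying $\sum_i|\varphi_{(n,i)}(x)-\varphi_{(n,i)}(y)|\le\delta$ whenever $d_{X_n}(x,y)\le R$, supported on $U_{(n,i)}=\{\varphi_{(n,i)}\neq 0\}$. By hypothesis (2), applying Lemma~\ref{lem:lem1}(3) to the sequence $\UU=\{U_{(n,i)}\}_{n,i\in I_n}$ with parameters $R,\delta$, choose unit-vector maps $\psi_{(n,i)}:U_{(n,i)}\to\HH_{(n,i)}$ satisfying $\|\psi_{(n,i)}(x)-\psi_{(n,i)}(y)\|\le\delta$ on scale $R$, together with: for every $\e'>0$ there exists $M=M(\e')$, uniform in $(n,i)$, such that $|\langle\psi_{(n,i)}(x),\psi_{(n,i)}(y)\rangle|\le\e'$ whenever $d_{X_n}(x,y)\ge M$. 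Define
$$\Phi_n(x)=\bigoplus_{i\in I_n}\sqrt{\varphi_{(n,i)}(x)}\,\psi_{(n,i)}(x)\in\bigoplus_{i\in I_n}\HH_{(n,i)}=:\mathcal{K}_n,$$
with the $i$-th slot set to zero when $\varphi_{(n,i)}(x)=0$.

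Normalization is immediate: $\|\Phi_n(x)\|^2=\sum_i\varphi_{(n,i)}(x)=1$. For the short-range variation, using the identity $\|\alpha\psi-\beta\psi'\|^2=(\alpha-\beta)^2+\alpha\beta\|\psi-\psi'\|^2$ (valid for $\alpha,\beta\ge 0$, $\|\psi\|=\|\psi'\|=1$), the elementary bound $(\sqrt a-\sqrt b)^2\le|a-b|$, and Cauchy--Schwarz giving $\sum_i\sqrt{\varphi_{(n,i)}(x)\varphi_{(n,i)}(y)}\le 1$, a direct computation yields $\|\Phi_n(x)-\Phi_n(y)\|^2\le\delta+\delta^2$; taking $\delta$ small in terms of $\e$ establishes condition (i). For the asymptotic inner product,
$$\langle\Phi_n(x),\Phi_n(y)\rangle=\sum_i\sqrt{\varphi_{(n,i)}(x)\varphi_{(n,i)}(y)}\,\langle\psi_{(n,i)}(x),\psi_{(n,i)}(y)\rangle,$$
and a nonzero $i$-th summand forces $x,y\in U_{(n,i)}$; so when $d_{X_n}(x,y)\ge M(\e')$ each individual inner product is bounded by $\e'$, and Cauchy--Schwarz on the weights gives $|\langle\Phi_n(x),\Phi_n(y)\rangle|\le\e'$. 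This verifies condition (ii).

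The subtle point is the coupling of two independent scales in a single formula: the partition of unity controls how $X_n$ is glued together from the $U_{(n,i)}$'s on scale $R$, while the uniform embeddability of the cover supplies both small local variation on scale $R$ and inner-product decay at all larger scales. Weighting each $\psi_{(n,i)}$ by $\sqrt{\varphi_{(n,i)}}$ transfers both features to $\Phi_n$: nonzero contributions, whether to variation or to asymptotic inner products, always arise from pairs of points sharing an active support index, where the cover's local embedding applies.
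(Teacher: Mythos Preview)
Your proof is correct and follows the same strategy as the paper: define $\Phi_n(x)=\bigoplus_i\sqrt{\varphi_{(n,i)}(x)}\,\psi_{(n,i)}(x)$ and verify criterion~(3) of Lemma~\ref{lem:lem1}. The only difference is a minor streamlining: the paper first replaces the cover $\UU$ by its $R$-thickening $\UU(R)$ so that $\xi_{(n,i)}(y)$ is defined whenever $x\in U_{(n,i)}$ and $d(x,y)\le R$, then estimates the short-range variation via the triangle inequality, obtaining the bound $\e+\e^{1/2}$; your use of the identity $\|\alpha\psi-\beta\psi'\|^2=(\alpha-\beta)^2+\alpha\beta\|\psi-\psi'\|^2$ makes the cross term vanish whenever one of the weights is zero, so you never need $\psi_{(n,i)}$ outside $U_{(n,i)}$ and get the slightly cleaner bound $\delta+\delta^2$.
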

\begin{proof}
We prove that $\XX$ verifies the criterion $(3)$ of Lemma \ref{lem:lem1}.\\
Let's fix $R>0$, $\e>0$ and consider $(\varphi_{(n,i)})_{i\in I_n}$ as above.
Since $\UU$ is uniformly embeddable, $\UU(R)=\{U_{(n,i)}(R)\}_{n,i\in I_n}$ is also embeddable (these two families are uniformly quasi-isometric). 
So we can find $\xi_{(n,i)}: U_{(n,i)}(R)\rightarrow \HH_{(n,i)}$ as in Lemma \ref{lem:lem1} characterization $(3)$ for the constants $R>0$ and $\e>0$.\\ 
Let's define $\varphi_n: X_n\rightarrow \bigoplus_{i\in I_n}\HH_{(n,i)}$ by the formula $\varphi_n(x)=(\varphi_{(n,i)}(x)^{1/2}\xi_{(n,i)}(x))_{i\in I_n}$. 
It is immediate from the construction that $\|\varphi_n(x)\|=1$ for any $x\in X_n$.\\
Let $x,y\in X_n$ with $d_{X_n}(x,y)\le R$, there exists $i\in I_n$ such that $x\in U_i^n$ and so $x,y\in U_{(n,i)}(R)$.
In particular $\|\xi_i^n(x)-\xi_i^n(y)\|\le\e$. 
It follows:
\begin{gather*}
\|\varphi_n(x)-\varphi_n(y)\|=[\sum_{i\in I_n}\|\varphi_{(n,i)}(x)^{1/2}(\xi_{(n,i)}(x)-\xi_{(n,i)}(y))+\\
(\varphi_{(n,i)}(x)^{1/2}-\varphi_{(n,i)}(y)^{1/2})\xi_{(n,i)}(y)\|^2]^{1/2}\le\\
 [\sum_{i\in I_n}\varphi_{(n,i)}(x).\|\xi_{(n,i)}(x)-\xi_{(n,i)}(y)\|^2]^{1/2}+[\sum_{i\in I_n}|\varphi_{(n,i)}(x)^{1/2}-\varphi_{(n,i)}(y)^{1/2}|^2\|\xi_{(n,i)}(y)\|^2]^{1/2}\le\\
 \e +[\sum_{i\in I_n}|\varphi_{(n,i)}(x)^{1/2}-\varphi_{(n,i)}(y)^{1/2}|^2]^{1/2}\le\e +[\sum_{i\in I_n}|\varphi_{(n,i)}(x)-\varphi_{(n,i)}(y)|]^{1/2}\le \e+\e^{1/2}.
\end{gather*}
In other words, for any $x,y\in X_n$, $d_{X_n}(x,y)\le R$ implies $\|\varphi_n(x)-\varphi_n(y)\|\le\e'=\e+\e^{1/2}$.
The second part to verify is a simple consequence of the following inequalities:
\begin{gather*}
|\langle \varphi_n(x)|\varphi_n(y)\rangle|\le\sum_{i\in I_n}\varphi_i^n(x)^{1/2}\varphi_i^n(y)^{1/2}|\langle\xi_i^n(x)|\xi_i^n(y)\rangle|\le\\
\sup_{n,i\in I_n}\sup\{|\langle\xi_i^n(x)|\xi_i^n(y)\rangle|\}\mid d(x,y)\ge M, x,y\in X_n\}
\end{gather*}
\end{proof}

\begin{Prop}\label{prop:prop13bis}
Let $\XX=(X_n)_n$ be a sequence of metric spaces.
Suppose for any $R>0$, $\e>0$ and $n\in\N$, there exists a partition of the unity $(\varphi_{(n,i)})_{i\in I_n}$ of $X_n$ such that:
\begin{description}
\item{$(1)$} for any $x,y\in X_n$, $d_{X_n}(x,y)\le R$ implies $\sum_{i\in I_n}|\varphi_{(n,i)}(x)-\varphi_{(n,i)}(y)|\le\e$.
\item{$(2)$} the family $\UU=\{U_{(n,i)}\}_{n,i\in I_n}$ with $U_{(n,i)}=\{\varphi_{(n,i)}\neq0\}$ is equi-exact.
\end{description}
Then $\XX$ is equi-exact.
\end{Prop}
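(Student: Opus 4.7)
The plan is to mimic the structure of the proof of Proposition \ref{prop:prop13}, replacing the inner layer of maps to Hilbert spaces with a second family of partitions of unity coming from the equi-exactness of the cover $\UU$. The output partition on each $X_n$ will be a product of the two levels.

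First I would fix $R>0$ and $\e>0$ and apply the hypothesis with parameters $(R,\e/2)$ to obtain partitions $(\varphi_{(n,i)})_{i\in I_n}$ of $X_n$ satisfying $\sum_i|\varphi_{(n,i)}(x)-\varphi_{(n,i)}(y)|\le\e/2$ whenever $d_{X_n}(x,y)\le R$, with supports $U_{(n,i)}=\{\varphi_{(n,i)}\neq 0\}$. Next, since $\UU$ is equi-exact, I would apply equi-exactness of $\UU$ with parameters $(R,\e/2)$ to obtain, on each $U_{(n,i)}$ (viewed as a metric space with the restricted metric), a partition of unity $(\psi_{(n,i,j)})_{j\in J_{(n,i)}}$ subordinated to a cover $\{V_{(n,i,j)}\}_j$ whose members have diameter bounded by a uniform constant $C$, and satisfying $\sum_j|\psi_{(n,i,j)}(x)-\psi_{(n,i,j)}(y)|\le\e/2$ for $x,y\in U_{(n,i)}$ with $d(x,y)\le R$.

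Then I would define, for each $(n,i,j)$,
$$\Psi_{(n,i,j)}(x)=\varphi_{(n,i)}(x)\psi_{(n,i,j)}(x)$$
on $U_{(n,i)}$ and extend by $0$ outside; this is coherent because $\varphi_{(n,i)}$ already vanishes off $U_{(n,i)}$. A direct computation gives $\sum_{i,j}\Psi_{(n,i,j)}\equiv 1$ and $\{\Psi_{(n,i,j)}\neq 0\}\subset V_{(n,i,j)}$, so condition $(2)$ of Definition \ref{def:def10} holds with the same uniform constant $C$. For condition $(1)$, given $x,y\in X_n$ with $d(x,y)\le R$, the standard telescoping identity
$$\Psi_{(n,i,j)}(x)-\Psi_{(n,i,j)}(y)=(\varphi_{(n,i)}(x)-\varphi_{(n,i)}(y))\psi_{(n,i,j)}(x)+\varphi_{(n,i)}(y)(\psi_{(n,i,j)}(x)-\psi_{(n,i,j)}(y))$$
combined with $\sum_j\psi_{(n,i,j)}(x)=\sum_j\psi_{(n,i,j)}(y)=1$ yields, after summing over $j$ and then $i$, the total bound
$$\sum_{i,j}|\Psi_{(n,i,j)}(x)-\Psi_{(n,i,j)}(y)|\le\sum_i|\varphi_{(n,i)}(x)-\varphi_{(n,i)}(y)|+\frac{\e}{2}\sum_i\varphi_{(n,i)}(y)\le\e.$$

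The only subtle step, and the one I view as the main obstacle, concerns indices $i$ where exactly one of $\varphi_{(n,i)}(x),\varphi_{(n,i)}(y)$ vanishes: for such $i$ the values of $\psi_{(n,i,j)}$ at one of the two points need not be defined, and the variation estimate for $\psi$ cannot be invoked directly. The remedy is to treat these indices separately via the extension-by-zero convention: if $\varphi_{(n,i)}(y)=0\neq\varphi_{(n,i)}(x)$, then $\sum_j|\Psi_{(n,i,j)}(x)-\Psi_{(n,i,j)}(y)|=\varphi_{(n,i)}(x)=|\varphi_{(n,i)}(x)-\varphi_{(n,i)}(y)|$, so the contribution is already controlled by the first sum coming from hypothesis $(1)$. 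Once this case is isolated, everything else is routine manipulation of partitions of unity.
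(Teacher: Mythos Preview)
Your proposal is correct and follows the same two-level product-partition-of-unity strategy as the paper: define $\eta_{(n,i,j)}=\varphi_{(n,i)}\psi_{(n,i,j)}$ and estimate via the telescoping identity. The one technical difference is in how the ``boundary'' issue (that $\psi_{(n,i,j)}$ is only defined on $U_{(n,i)}$) is dealt with: the paper first thickens the cover to $\UU(R)=\{U_{(n,i)}(R)\}$, notes that this family is still equi-exact, and places the inner partitions $(\psi_{(n,i,j)})_j$ on $U_{(n,i)}(R)$, so that $x\in U_{(n,i)}$ and $d(x,y)\le R$ automatically give $y\in U_{(n,i)}(R)$ and the $\psi$-variation estimate applies with no case distinction; you instead keep the original $U_{(n,i)}$ and isolate by hand the indices where exactly one of $\varphi_{(n,i)}(x),\varphi_{(n,i)}(y)$ vanishes, observing that their contribution collapses to $|\varphi_{(n,i)}(x)-\varphi_{(n,i)}(y)|$. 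Your route is marginally more elementary in that it avoids invoking the (easy) stability of equi-exactness under uniform thickening, at the cost of a short case analysis; the paper's route is cleaner bookkeeping. Both yield the bound $2\e$ (or $\e$ with your choice of $\e/2$ at each stage).
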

\begin{proof}
Let $R>0$, $\e>0$ be given.
Let $(\varphi_{(n,i)})_{n,i\in I_n}$ be the maps subordinated to $\UU=\{U_{(n,i)}\}_{n,i\in I_n}$ as in the statement.\\
Since $\UU$ is equi-exact so is $\UU(R)=\{U_{(n,i)}(R)\}_{n,i\in I_n}$. In particular we can find $\psi_{(n,i,j)}:X_n\rightarrow \R_+$ subordinated to $V_{(n,i,j)}\subset U_{(n,i)}(R)$ with $j\in J_{(n,i)}$ such that for any $x,y\in U_{(n,i)}(R)$ with $d_{X_n}(x,y)\le R$, $\sum_{j\in J_{(n,i)}}|\psi_{(n,i,j)}(x)-\psi_{(n,i,j)}(y)|\le\e$ and $\VV=\{V_{(n,i,j)}\}_{n,i\in I_n,j\in J_{(n,i)}}$ is uniformly bounded.\\
Let's define the partition of the unity of $X_n$, $\eta_{(n,i,j)}=\varphi_{(n,i)}\psi_{(n,i,j)}$ subordinated to $\VV$. 
Moreover,
\begin{gather*}
\sum_{i\in I_n, j\in J_{(n,i)}}|\eta_{(n,i,j)}(x)-\eta_{(n,i,j)}(y)|\le 
\sum_{i\in I_n}\varphi_{(n,i)}(x)\sum_{j\in J_{(n,i)}}|\psi_{(n,i,j)}(x)-\psi_{(n,i,j)}(y)|\\
+\sum_{i\in I_n}|\varphi_{(n,i)}(x)-\varphi_{(n,i)}(y)|\sum_{j\in J_{(n,i)}}\psi_{(n,i,j)}(x)\\
\le \sum_{i\in I_n}\varphi_{(n,i)}(x)\sum_{j\in J_{(n,i)}}|\psi_{(n,i,j)}(x)-\psi_{(n,i,j)}(y)|
+\sum_{i\in I_n}|\varphi_{(n,i)}(x)-\varphi_{(n,i)}(y)|
\end{gather*}

Since $\varphi_{(n,i)}(x)\neq 0$ implies that $x\in U_{(n,i)}$ and then $y\in U_{(n,i)}(R)$, by definition of $(\psi_{(n,i,j)})_{j\in J_{(n,i)}}$, $\sum_{j\in J_{(n,i)}}|\psi_{(n,i,j)}(x)-\psi_{(n,i,j)}(y)|\le \e$.\\
Moreover for any $x,y\in X_n$ with $d_{X_n}(x,y)\le R$ we know that: $\sum_{i\in I_n}|\varphi_{(n,i)}(x)-\varphi_{(n,i)}(y)|\le\e$. 
Combining these two estimations we have: $\sum_{i\in I_n, j\in J_{(n,i)}}|\eta_{(n,i,j)}(x)-\eta_{(n,i,j)}(y)|\le2\e$ as desired.
\end{proof}

\begin{Cor}\label{cor:cor4}
Let $\XX=(X_n)_n$ and $\YY=(Y_n)_n$ be two sequences of metric spaces and $\FF=(F_n:X_n\rightarrow Y_n)_n$ a sequence of contractive maps.
Assume that $\YY$ is equi-exact and for any uniformly bounded cover $\UU=\{\{U_{(n,i)}\}_{i\in I_n}\}_n$ of $\YY$, i.e $\{U_{(n,i)}\}_{i\in I_n}$ is a partition of $Y_n$ and $\sup_{n, i\in I_n}diam(U_{(n,i)})<+\infty$, the cover $\FF^{-1}(\UU)=\{\{F_n^{-1}(U_{(n,i)})\}_i\}_n$ of $\XX$ is uniformly embeddable (respectively, equi-exact). 
Then the family $\XX$ is uniformly embeddable (respectively, equi-exact).
\end{Cor}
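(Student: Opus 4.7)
The plan is to reduce the corollary to Propositions \ref{prop:prop13} and \ref{prop:prop13bis} by pulling back, via the contractive maps $F_n$, a partition of unity furnished by the equi-exactness of $\YY$. The heart of the argument is that a contractive map pulls back a slowly-varying partition of unity on the target to a slowly-varying partition of unity on the source, and the support sets are precisely a cover of the form $\FF^{-1}(\UU)$ for some uniformly bounded cover $\UU$ of $\YY$.

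More precisely, fix $R>0$ and $\varepsilon>0$. Applying the equi-exactness of $\YY$ to these constants produces, for each $n$, a partition of unity $(\psi_{(n,i)})_{i\in I_n}$ on $Y_n$ subordinated to a uniformly bounded cover $\UU=\{U_{(n,i)}\}_{n,i\in I_n}$ such that $\sum_{i\in I_n}|\psi_{(n,i)}(y)-\psi_{(n,i)}(y')|\le\varepsilon$ whenever $d_{Y_n}(y,y')\le R$. I would then set $\varphi_{(n,i)}:=\psi_{(n,i)}\circ F_n$, which is manifestly a partition of unity on $X_n$ whose supports lie in $F_n^{-1}(U_{(n,i)})$. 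Since $F_n$ is contractive, any pair $x,x'\in X_n$ with $d_{X_n}(x,x')\le R$ satisfies $d_{Y_n}(F_n(x),F_n(x'))\le R$, so
\[\sum_{i\in I_n}|\varphi_{(n,i)}(x)-\varphi_{(n,i)}(x')|=\sum_{i\in I_n}|\psi_{(n,i)}(F_n(x))-\psi_{(n,i)}(F_n(x'))|\le\varepsilon.\]
This is exactly condition $(1)$ in the hypotheses of Propositions \ref{prop:prop13} and \ref{prop:prop13bis}.

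It remains to arrange condition $(2)$. The family of supports is $\{F_n^{-1}(U_{(n,i)})\}_{n,i\in I_n}=\FF^{-1}(\UU)$, and by the standing hypothesis this family is uniformly embeddable (respectively, equi-exact) since $\UU$ is a uniformly bounded cover of $\YY$. Applying Proposition \ref{prop:prop13} in the embedding case, or Proposition \ref{prop:prop13bis} in the exactness case, then yields the desired conclusion for $\XX$. The verification is essentially mechanical once the pullback construction is set up; the only subtle point to keep in mind is that one must feed equi-exactness of $\YY$ the same pair $(R,\varepsilon)$ that $\XX$ is being tested against, which works precisely because $F_n$ is $1$-Lipschitz uniformly in $n$.
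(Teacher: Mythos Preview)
Your proposal is correct and follows essentially the same approach as the paper: pull back the partition of unity furnished by equi-exactness of $\YY$ through the contractions $F_n$, observe that contractiveness preserves the $R$-Lipschitz condition, note that the supports form a cover of the type $\FF^{-1}(\UU)$ covered by the hypothesis, and then invoke Proposition~\ref{prop:prop13} (respectively Proposition~\ref{prop:prop13bis}). Your write-up is slightly more explicit than the paper's about where exactly the contractive assumption is used, but the argument is the same.
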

\begin{proof}
Let $R>0$, $\varepsilon>0$ be two positive constants. 
Since $\YY$ is an equi-exact sequence, we can find an uniformly bounded cover $\UU$ of $\YY$ and a subordinated partition of unity $\{(\varphi_{(n,i)})_{i\in I_n}\}_n$ which satisfies Definition \ref{def:def10} with respect to the constants $R$ and $\e$.
Then $\{(\varphi_i^n\circ F_n)_{i\in I_n}\}_n$ is a partition of unity on the sequence $\XX$ subordinated to $\FF^{-1}(\UU)$ which is uniformly embeddable (respectively, equi-exact) by assumption. Proposition \ref{prop:prop13} (respectively, Proposition \ref{prop:prop13bis}) can be applied.
\end{proof}

\begin{Cor}\label{cor:cor5}
Let $\XX=(X_n)_n$ and $\YY=(Y_n)_n$ be two sequences of metric spaces, $G$ a group which acts by isometries on each $X_n$ and $Y_n$ for any $n$ and $\FF=(F_n:X_n\rightarrow Y_n)_n$ a sequence of $G$-equivariant contractions.
Assume that $\YY$ is equi-exact and the actions on $\YY$ are uniformly cobounded, i.e there exists $C>0$ such that $\text{diam$(Y_n/G)$}\le C$ for any $n$.\\
If for any $n$, there exists $y_n\in Y_n$ such that for any $R\ge0$ the sequence of inverse images $(F_n^{-1}(B(y_n,R)))_n\subset \XX$ is uniformly embeddable (respectively, equi-exact) then $\XX$ is uniformly embeddable (respectively, equi-exact).
\end{Cor}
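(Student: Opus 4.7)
The plan is to reduce this statement to Corollary \ref{cor:cor4}, whose hypotheses concern arbitrary uniformly bounded covers of $\YY$ rather than a single preferred family of balls. So I would pick an arbitrary uniformly bounded cover $\UU=\{\{U_{(n,i)}\}_{i\in I_n}\}_n$ of $\YY$, set $D=\sup_{n,i}\text{diam}(U_{(n,i)})<\infty$, and aim to show that $\FF^{-1}(\UU)=\{\{F_n^{-1}(U_{(n,i)})\}_{i\in I_n}\}_n$ is uniformly embeddable (respectively, equi-exact); Corollary \ref{cor:cor4} then closes the argument, since by assumption $\YY$ itself is equi-exact.

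The geometric heart of the argument uses the uniformly cobounded action on $\YY$ together with the $G$-equivariance of the $F_n$. For each pair $(n,i)$, the orbit $G\cdot y_n$ is $C$-dense in $Y_n$, so picking any $u_{(n,i)}\in U_{(n,i)}$ one finds $g_{(n,i)}\in G$ with $d_{Y_n}(u_{(n,i)},g_{(n,i)}\cdot y_n)\le C$. By the triangle inequality and the fact that $G$ acts isometrically,
\[
U_{(n,i)}\subset B(g_{(n,i)}\cdot y_n,D+C)=g_{(n,i)}\cdot B(y_n,D+C),
\]
and the $G$-equivariance of $F_n$, which implies $F_n^{-1}(g\cdot B)=g\cdot F_n^{-1}(B)$, then yields
\[
F_n^{-1}(U_{(n,i)})\subset g_{(n,i)}\cdot F_n^{-1}(B(y_n,D+C)).
\]
Left translation by $g_{(n,i)}^{-1}$ is an isometry of $X_n$, so each $F_n^{-1}(U_{(n,i)})$ embeds isometrically into $F_n^{-1}(B(y_n,D+C))$.

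By hypothesis, the sequence $(F_n^{-1}(B(y_n,D+C)))_n$ is uniformly embeddable (respectively, equi-exact). Both properties pass to any family whose members embed isometrically into the original pieces, with no deterioration of the control data: for uniform embeddability, simply precompose the given Hilbert-space maps with the isometric embeddings $\iota_{(n,i)}\colon F_n^{-1}(U_{(n,i)})\hookrightarrow F_n^{-1}(B(y_n,D+C))$, retaining the common $\rho_\pm$; for equi-exactness, pull back along $\iota_{(n,i)}$ the partition of unity provided by Definition \ref{def:def10}, which remains a partition of unity subordinated to a uniformly bounded cover and preserves the sum-of-variations bound. Applying this observation uniformly over $(n,i)$ gives the desired property of $\FF^{-1}(\UU)$, and Corollary \ref{cor:cor4} concludes.

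I do not foresee a serious obstacle: the only subtlety is bookkeeping the uniformity as one passes from an $n$-indexed sequence of preimages of balls to an $(n,i)$-indexed family of preimages of cover pieces, but uniform coboundedness plus $G$-equivariance reduces the latter to isometric copies of the former, so the control functions and partitions of unity are transported without any loss.
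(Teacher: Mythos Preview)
Your proposal is correct and follows essentially the same route as the paper: both reduce to Corollary~\ref{cor:cor4} by using the uniformly cobounded $G$-action to translate each cover piece $U_{(n,i)}$ into a fixed ball $B(y_n,R_0)$ (with $R_0$ depending only on the diameter bound of the cover and the coboundedness constant), and then invoke $G$-equivariance of $F_n$ to conclude that each $F_n^{-1}(U_{(n,i)})$ sits isometrically inside $F_n^{-1}(B(y_n,R_0))$. The only cosmetic difference is that the paper phrases coboundedness via a bounded set $B_n$ meeting every orbit, whereas you use $C$-density of $G\cdot y_n$ directly; these are equivalent formulations of the same step.
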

\begin{proof}
Let $R>0$, $\varepsilon>0$ be two positive constants. 
Since $\YY$ is an equi-exact sequence, we can find an uniformly bounded cover $\UU$ of $\YY$ and a subordinated partition of unity $\{(\varphi_{(n,i)})_{i\in I_n}\}_n$ which satisfies Definition \ref{def:def10} with respect to the constants $R$ and $\e$.
Let $y_n\in X_n$ as in the statement.
Since the actions of $G$ are uniformly cobounded, we can find a sequence of subspaces $B_n\subset Y_n$ such that $y_n\in B_n$, every orbit of the $G$-action on $Y_n$ intersects $B_n$ and $(B_n)_n$ is a uniformly bounded sequence of metric spaces.\\
Let $R_0=\sup_{i,n}\text{diam}(U_i^n)+\sup_n \text{diam}(B_n)<+\infty$, we claim that for any $n\in \N$ and ${i\in I_n}$, there exists $g_i^n$ such that $g_i^nU_i^n\subset B(y_n,R_0)$. 
Indeed, let $g_i^n\in G$ be such that $g_i^nU_i^n$ intersects $B_n$ and $z\in s_i^nU_i^n\cap B_n$, then for any $y\in U_i^n$ we have :
\[d(g_i^ny,y_n)\le d(g_i^ny,z)+d(z,y_n)\le  d(y,(g_i^n)^{-1}z)+d(z,y_n)\le R_0.\]
It follows $g_i^nF_n^{-1}(U_i^n)=F_n^{-1}(g_i^nU_i^n)\subset F_n^{-1}(B(y_n, R_0))$ which is uniformly embeddable in $n$ (respectively, equi-exact). 
So Corollary \ref{cor:cor4} can be applied
\end{proof}

Now we can formulate our reduction result discussed at the beginning of this subsection:\\
Let $A$, $B$ be two discrete residually finite groups and $(\overline{A}_n)_n$, $(\overline{B}_n)_n$ two faithful sequence of finite quotients of respectively $A$ and $B$.\\
Let's denote $G=A\star B$, $G_n=\overline{A}_n\star\overline{B}_n$ their free products and $D\unlhd G$, $D_n\unlhd G_n$ the free commutator subgroups associated.\\
Let $(M_{(n,k)})_k$ be sequence of finite index characteristic subgroups of $D_n$ with trivial intersection (we need a bit less assumptions here because we do not require faithfulness of the sequence of quotients).
Assume $A$ and $B$ are amenable and fix a proper length function $\ell$ on $G$.
\begin{Thm}\label{thm:thm3}
If the sequence of metric spaces $(D_n/{M_{(n,k)}})_{n,k\ge n}$, where each $D_n/{M_{(n,k)}}$ is endowed with the restriction of the quotient length on $G_n/{M_{(n,k)}}$ induced by $\ell$, embeds uniformly into a Hilbert space, then $(G_n/{M_{(n,k)}})_{n,k\ge n}$ embeds uniformly.
\end{Thm}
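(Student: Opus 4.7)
The plan is to apply Corollary \ref{cor:cor5} to the natural $1$-Lipschitz quotient maps
$$F_{(n,k)}:G_n/M_{(n,k)}\to \overline{A}_n\times\overline{B}_n,$$
which are well-defined because $M_{(n,k)}\subset D_n$ for every $(n,k)$. The ambient free product $G=A\star B$ acts on both sides by isometric left translations via the canonical surjections, and each $F_{(n,k)}$ is then $G$-equivariant. The action on $\overline{A}_n\times\overline{B}_n$ is transitive, hence uniformly cobounded with constant zero.

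First, I would verify that the target sequence $(\overline{A}_n\times\overline{B}_n)_{n,k\ge n}$ is equi-exact. The kernels $\ker(A\to\overline{A}_n)\times\ker(B\to\overline{B}_n)$ form a faithful sequence of finite-index normal subgroups of $A\times B$, and the metric on each $\overline{A}_n\times\overline{B}_n$ is the quotient of the induced proper length on $A\times B$ coming from $\ell$ (quotients of quotients coincide with direct quotients). Since $A$ and $B$ are amenable, so is $A\times B$, and Lemma \ref{prop:prop11} yields equi-exactness of $(\overline{A}_n\times\overline{B}_n)_n$; repeating each term does not disturb this property, so the doubly-indexed version is still equi-exact.

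By Corollary \ref{cor:cor5}, it remains to show that for every $R\ge 0$ the sequence of preimages $(F_{(n,k)}^{-1}(B(\bar e, R)))_{n,k\ge n}$ is uniformly embeddable. Each such preimage is a disjoint union of cosets of $D_n/M_{(n,k)}$ in $G_n/M_{(n,k)}$, one for every point of $B(\bar e, R)\cap \overline{A}_n\times\overline{B}_n$. Since $\ell$ is proper, the cardinality $|B(e,R)\cap A\times B|$ is a finite constant $C_R$; because $\overline{A}_n\times\overline{B}_n$ is a quotient of $A\times B$, the number of cosets is bounded by $C_R$ uniformly in $(n,k)$. Moreover, each coset, equipped with the metric induced from $G_n/M_{(n,k)}$, is isometric via left translation by any lift to $D_n/M_{(n,k)}$ with its restricted metric, hence uniformly embeddable by the hypothesis of the theorem.

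The remaining step, which I expect to be the main obstacle, is to combine at most $C_R$ uniformly embeddable subspaces into a single uniform embedding of the whole preimage. This is the finite-union permanence for coarse embeddings: by induction on the number of pieces one reduces to the case $Z=Z_1\cup Z_2$, handled by extending the embeddings of $Z_1$ and $Z_2$ to suitable neighborhoods and interpolating via a Lipschitz partition of unity subordinated to $\{Z_1,Z_2\}$, then applying Proposition \ref{prop:prop13}. The uniformity of the bound $C_R$ across $(n,k)$ guarantees that the resulting compression and dilation functions do not degrade with $(n,k)$, completing the verification of the Corollary \ref{cor:cor5} hypothesis and hence the proof.
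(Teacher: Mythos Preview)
Your approach is correct and follows the same global strategy as the paper: apply Corollary~\ref{cor:cor5} to the quotient maps $G_n/M_{(n,k)}\to\overline{A}_n\times\overline{B}_n$, using amenability of $A\times B$ via Lemma~\ref{prop:prop11} for equi-exactness of the target sequence. Where you diverge is in the treatment of the preimages $F_{(n,k)}^{-1}(B(\bar e,R))$: you decompose each as a union of at most $C_R$ left cosets of $D_n/M_{(n,k)}$ and then appeal to finite-union permanence for uniform embeddability, which you correctly identify as a nontrivial (though standard) step requiring a partition-of-unity argument through Proposition~\ref{prop:prop13}. The paper avoids this entirely with a one-line density observation: $D_n/M_{(n,k)}$ is $R$-dense in $F_{(n,k)}^{-1}(B(\bar e,R))$. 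Indeed, if the quotient length of $F_{(n,k)}(g)$ is at most $R$, then by definition of the quotient length there is $d\in D_n/M_{(n,k)}$ with $\ell(gd)\le R$, and since $D_n/M_{(n,k)}$ is normal in $G_n/M_{(n,k)}$ this puts some element of $D_n/M_{(n,k)}$ within distance $R$ of $g$. Thus each preimage is uniformly coarsely equivalent to $D_n/M_{(n,k)}$ itself, and uniform embeddability transfers immediately with no gluing required. The paper also opens with a reduction to the case $B=\Z/2$ via the embedding $A\star B\hookrightarrow (A\times B)\star\Z/2$, but this plays no essential role in the argument for Theorem~\ref{thm:thm3} and your direct treatment of general $B$ is fine.
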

\begin{proof}
Any free product $\Gamma=\Gamma_1\star \Gamma_2$ is a subgroup of $(\Gamma_1\times \Gamma_2)\star\Z/2$.
Indeed, let $u$ be the generator of $\Z/2$, then the homomorphism $f:\Gamma\rightarrow (\Gamma_1\times\Gamma_2 )\star\Z/2$ which sends $g_1\in \Gamma_1\mapsto g_1$ and $g_2\in \Gamma_2\mapsto u.g_2.u$ is a realization.\\
Moreover, if we assume $\Gamma_1$ and $\Gamma_2$ amenable, stability of amenability by direct product ensures that $\Gamma_1\times \Gamma_2$ is amenable \cite{BrOz} and heredity on box space uniform embedding (cf Proposition \ref{prop:prop6}) allows us to assume $B=\Z/2$.

Let $\YY=(Y_n)_n$ with $Y_n=G_n/{D_n}=\overline{A}_n\times\Z/2$. 
We have the following exact sequence:
$$1\rightarrow D_n/{M_{(n,k)}}\rightarrow G_n/{M_{(n,k)}}\rightarrow \overline{A}_n\times\Z/2\rightarrow 1.$$
$G$ acts on both $X_n$ and $Y_n$ transitively which ensures the action to be uniformly cobounded and the projections $\pi_n: X_n\rightarrow Y_n$ are $G$-equivariant contractions.
Since $A\times\Z/2$ is amenable, Lemma \ref{prop:prop11} ensures that the sequence $\YY$ is equi-exact with respect to any proper length function on $A\times \Z/2$.\\ 
Moreover $D_n/{M_{(n,k)}}$ is R-dense in $\pi_n^{-1}(B_{Y_n}(e,R))$, so the uniform embeddability of $\UU(R)=(\pi_n^{-1}(B(e,R)))_n$ follows by our assumption on $(D_n/{M_{(n,k)}})_{n,k\ge n}$.
Corollary \ref{cor:cor5} can apply.
\end{proof}

\subsection{Coarse metric approximation and free product}\label{sub:sub42}
The purpose of this subsection is the conditions under which the sequence of metric groups $(D_n/{M_{(n,k)}})_{n,k\ge n}$, discussed previously, endowed with a \textit{free product metric} (cf. Theorem \ref{thm:thm3} statement) embeds uniformly into a Hilbert space.
It appears that even if the sequences $\XX=((D_n/{M_{(n,k)}},\overline{\ell}))_{n,k\ge n}$, where $\overline{\ell}$ is the quotient metric induces by a fixed length on $G$, and $\XX_{\text{Comb}}=((D_n/{M_{(n,k)}},\overline{\ell}_{\text{Comb},\BB})_{n,k\ge n}$, where $\overline{\ell}_{\text{Comb},\BB}$ is the quotient metric induced by the word metric on $\BB$ of $D$, are not coarsely equivalent (cf. remark \ref{rem:rem1}).
We can \textit{locally} approximate $\XX$ by $\XX_{Comb}$, in a particular sense, and be able to conclude on the uniform embedding of $\XX$ from the uniform embedding of $\XX_\text{Comb}$.\\

We first need an analogue of the stability by direct limit of the uniform embedding of box spaces.\\
Let $(G,\ell)$ be a discrete group endowed with a proper length $\ell$.\\
Recall that a sequence of subgroups $(H_\tau)_\tau$ of $G$ converges to $(G,\ell)$ if for all $R>0$, there exists $T>0$ such that $B(e,R)\subset H_\tau$ for all $\tau\ge T$.\\
Given a sequence of normal subgroups $\NN=(N_n)_n$ on $(G,\ell)$, we can associate to $(H_\tau)_\tau$ a sequence of metric subspaces of $X=\sqcup_{N\in \NN} G/N$ by considering $X_\tau=\sqcup_{N\in \NN} \pi_N(H_\tau)\subset X$ where $\pi_N: G\to G/N$ is the quotient map.\\
This sequence play a role analogue as in the group setting.
In particular uniform embedding is stable by direct limit through these type of sequences as the following proposition shows.

\begin{Prop}\label{prop:prop8}
Assume $(G,\ell)$ has a sequence of normal subgroups $\NN=(N_n)_n$ such that for all $\tau$, the metric space $X_\tau=\sqcup_{N\in \NN} \pi_N(H_\tau)$ uniformly embeds into Hilbert space.\\
Then the metric space $X=\sqcup_{N\in \NN} G/N$ coarsely embeds into Hilbert space.
\end{Prop}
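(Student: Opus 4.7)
The plan is to verify characterization (3) of Lemma \ref{lem:lem1} for the sequence $(G/N_n)_n$ by transferring the normalized maps guaranteed on each $X_\tau$ back to $X$, using the fact that $(H_\tau)_\tau$ eventually swallows every ball.

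First, fix $R>0$ and $\varepsilon>0$. Using the convergence hypothesis, I would pick $\tau$ with $B_\ell(e,R)\subset H_\tau$, then apply Lemma \ref{lem:lem1}(3) to the uniformly embeddable space $X_\tau$ at the parameters $R$ and $\varepsilon$. This yields maps $\psi_n:\pi_{N_n}(H_\tau)\to\HH_n$ with $\|\psi_n\|=1$ satisfying the short-range continuity condition and the long-range decay of inner products. The next ingredient is that $K_n:=\pi_{N_n}(H_\tau)=(H_\tau N_n)/N_n$ is an actual subgroup of $G/N_n$, since $N_n\unlhd G$ makes $H_\tau N_n$ a subgroup of $G$; in particular we may partition $G/N_n=\bigsqcup_{\alpha\in A_n} g_\alpha K_n$ for some choice of coset representatives.

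The key geometric step is the following separation claim: if $x,y\in G/N_n$ lie in distinct cosets of $K_n$, then $d(x,y)>R$. Indeed, by left-invariance and the definition of the quotient length, $d(x,y)\le R$ means there is a lift of $x^{-1}y$ of $\ell$-length at most $R$; this lift belongs to $B_\ell(e,R)\subset H_\tau$, so $x^{-1}y\in\pi_{N_n}(H_\tau)=K_n$, forcing $x$ and $y$ into the same coset. With this in hand I would define
\[
\varphi_n:G/N_n\longrightarrow \bigoplus_{\alpha\in A_n}\HH_n,\qquad \varphi_n(g_\alpha k)=e_\alpha\otimes\psi_n(k),\quad k\in K_n,
\]
where $\{e_\alpha\}$ is an orthonormal basis indexing the cosets. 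Clearly $\|\varphi_n(x)\|=1$. For condition (i), any pair $x,y$ with $d(x,y)\le R$ lies in a common coset $g_\alpha K_n$, and writing $x=g_\alpha k$, $y=g_\alpha k'$, left-invariance gives $d_{K_n}(k,k')=d(x,y)\le R$; the property of $\psi_n$ then transfers to $\|\varphi_n(x)-\varphi_n(y)\|\le\varepsilon$. For condition (ii), pairs in distinct cosets contribute zero inner product by orthogonality of the $e_\alpha$, while pairs in the same coset reduce, via left-invariance, to the decay of inner products already built into $\psi_n$.

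I expect the main conceptual hurdle to be the separation statement, because it is the one place where the convergence hypothesis on $(H_\tau)_\tau$ is genuinely used and where one must pass between the ambient group $G$ and each finite quotient $G/N_n$ in a uniform way. Once this is established the rest is an orthogonal-direct-sum bookkeeping that preserves both the $\ell^2$-normalization and the two quantitative conditions of Lemma \ref{lem:lem1}(3), so invoking Lemma \ref{lem:lem1} finishes the argument.
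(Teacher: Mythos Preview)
Your proposal is correct and follows essentially the same route as the paper: verify criterion (3) of Lemma~\ref{lem:lem1} by choosing $\tau$ with $B_\ell(e,R)\subset H_\tau$, then inducing the normalized maps from $K_n=\pi_{N_n}(H_\tau)$ to all of $G/N_n$ via the coset decomposition, tensoring with an orthonormal family indexed by the cosets. The paper's notation differs (it writes $\widehat{\varphi_n^\tau}(x)=\varphi_n^\tau(x')\otimes\delta_{\overline{x}}$ using a section $\sigma_n^\tau:G_n/H_n^\tau\to G_n$ rather than coset representatives $g_\alpha$), but the separation claim, the same-coset distance identity via left-invariance, and the verification of (i) and (ii) are identical.
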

\begin{proof}
Since $X_\tau$ embeds uniformly into Hilbert space, for all $\e>0$ we can find $\varphi_n^\tau:H_n^\tau\rightarrow \HH_n^\tau$, with $\HH_n^\tau$ a Hilbert space, such that  $\|\varphi_n(x)\|=1$ and:
\begin{description}
\item{$(i)$} for all $n\ge1$ and $x,y\in H_n^\tau$ with $d_n(x,y)\le R$, $\|\varphi_n^\tau(x)-\varphi_n^\tau(y)\|\le\e$.
\item{$(ii)$} for all $\e'>0$, there exists $M>0$ such that $|\langle \varphi_n^\tau(x)|\varphi_n^\tau(y)\rangle|\le\e'$ whenever $d_n(x,y)\ge M$.
\end{description}
with $d_n$ the left-invariant distance associated to $\ell_n$.\\
Let's fix $\sigma_n^\tau:Q_n^\tau\rightarrow G_n$, with $Q_n^\tau=G_n/{H_n^\tau}$, a section for all $n$ and $\tau$.
Let $R>0$ be a positive constant and $T$ such that $B_n(e,R)\subset H_n^\tau$ for all $n$ and $\tau\ge T$.\\
First, observe that, for all $x,y\in \Gamma_n$, $x\sim y$ if and only if $\sigma_n^\tau(\overline{x})=\sigma_n^\tau(\overline{y})$ and in such case:
$$d_n(x,y)=d_n(\sigma_n^\tau(\overline{x})x',\sigma_n^\tau(\overline{y})y')=d_n(x',y')$$
with $x'=\sigma_n^\tau(\overline{x})^{-1}x\in H_n^\tau$ (respectively, $y'=\sigma_n^\tau(\overline{y})^{-1}y\in H_n^\tau$).\\
Now, we define the induced maps $\widehat{\varphi_n^\tau}:\Gamma_n\rightarrow \HH_n^\tau\otimes\ell^2(Q_n^\tau)$ by the formula $\widehat{\varphi_n^\tau}(x)=\varphi_n^\tau(x')\otimes\delta_{\overline{x}}$ with $x'$ as above. 
A direct computation shows that these maps verify :\\
$$\|\widehat{\varphi_n^\tau}(x)-\widehat{\varphi_n^\tau}(y)\|=\left\{\begin{array}{ll}
\|\varphi_n^\tau(x')-\varphi_n^\tau(y')\|&\text{if $x\sim y$}\\
2&\text{otherwise}
\end{array}\right.$$
for $x,y\in G_n$.\\
But if $d_n(x,y)\le R$, by our assumption this imply $x^{-1}y\in B_n(e,R)\subset H_n^\tau$ and consequently $x\sim y$. 
So according to the above observation $d_n(x,y)=d_n(x',y')\le R$ and then we have: $\|\widehat{\varphi_n^\tau}(x)-\widehat{\varphi_n^\tau}(y)\|=\|\varphi_n^\tau(x')-\varphi_n^\tau(y')\|\le\e$.\\
In a similar way:
\begin{equation}\label{eq:eq2}
|\langle\widehat{\varphi_n^\tau}(x)|\widehat{\varphi_n^\tau}(y)\rangle|=\left\{\begin{array}{ll}
|\langle\varphi_n^\tau(x')|\varphi_n^\tau(y')\rangle|&\text{if $x\sim y$}\\
0&\text{otherwise}
\end{array}\right.
\end{equation}
Let $\e'>0$ be a positive constant and $M>0$ such that, $|\langle\varphi_n^\tau(x_\tau)|\varphi_n^\tau(y_\tau)\rangle|\le\e'$ whenever $d_n(x_\tau,y_\tau)\ge M$ with $x_\tau,y_\tau\in H_n^\tau$. 
According to the formula  (\ref{eq:eq2}), $|\langle\widehat{\varphi_n^\tau}(x)|\widehat{\varphi_n^\tau}(y)\rangle|\le\e'$ whenever $d_n(x,y)\ge M$.
Indeed, if $x\nsim y$ there is nothing to do, otherwise, using the observation we have that $d_n(x',y')=d_n(x,y)\ge M$ and $|\langle\widehat{\varphi_n^\tau}(x)|\widehat{\varphi_n^\tau}(y)\rangle|=|\langle\varphi_n^\tau(x')|\varphi_n^\tau(y')\rangle|\le\e'$.\\
This result combined with Lemma \ref{lem:lem1} prove the Proposition \ref{prop:prop8}.
\end{proof}

Let's get back to free product.
As usual $A$ and $B$ are two discrete finitely generated groups, $G=A\star B$ denote their free product and $D\unlhd G$ is the commutator subgroup of $G$.\\
Given two sequences of quotients of $A$ and $B$ denoted respectively $(\overline{A}_n)_n$ and $(\overline{B}_n)_n$, as before, we define a quotient sequence of $G$ by taking $G_n=\overline{A}_n\star\overline{B}_n$ and denote $\pi_n:G\to G_n$ the quotient maps associated.\\
We define $D_\tau$, with $\tau>0$, the free subgroup of $D$ with basis $\BB_\tau=\{[{a},{b}]\mid \ell({a}),\ell({b})\le\tau; a,b\neq 1\}\subset D$ (cf. Proposition \ref{prop:prop7bis}).
$H_\tau$ acts isometrically by left translation on $(\pi_n(H_\tau),\overline{\ell})$ where $\overline{\ell}$ is the quotient length associated to a proper length $\ell$ on $G$.

\begin{Lem}\label{cor:cor3} 
Let $\ell$ be a proper length function on $G$.\\
We endow  $D_\tau$ with the word metric, $\ell_{\text{Comb},\BB_\tau}$, associated to the basis $\BB_\tau$.
For all $\tau>0$, there exist a rank $N\ge0$ and a pair of constants $(C,\lambda)$ which depends only on $\tau$ such that the quotient maps $\pi_n: (H_\tau,\ell_{\text{Comb},\BB_\tau})\to (\pi_n(H_\tau),\overline{\ell})$ define $G$-equivariant $(C,\lambda)$-quasi-isometries for all $n\ge N$.
\end{Lem}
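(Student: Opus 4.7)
The plan is to identify $D_\tau$ with its image in $G_n$ for sufficiently large $n$ via the basis $\BB_\tau$, and then transport the tree quasi-isometry of Proposition \ref{prop:prop7bis} applied to $G_n$ back to compare $\ell_{\text{Comb},\BB_\tau}$ with the quotient length $\overline{\ell}$. The upper bound is essentially free: each generator $[a,b]\in\BB_\tau$ satisfies $\ell([a,b])\le 2\ell(a)+2\ell(b)\le 4\tau$, so for $g=s_1\cdots s_k\in D_\tau$ reduced in $\BB_\tau^{\pm 1}$,
\[\overline{\ell}(\pi_n(g))\le \ell(g)\le 4\tau\cdot\ell_{\text{Comb},\BB_\tau}(g),\]
giving a Lipschitz constant depending only on $\tau$ and independent of $n$.

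The heart of the argument is a basis identification for large $n$. Since $\ell$ is proper, $\BB_\tau$ is finite and only finitely many elements of $A\cup B$ appear in its definition. Faithfulness of the sequences $(\overline{A}_n)_n$ and $(\overline{B}_n)_n$ then yields an integer $N=N(\tau)$ such that, for all $n\ge N$, the restrictions of $\pi_n$ to $\{a\in A:\ell(a)\le\tau\}$ and $\{b\in B:\ell(b)\le\tau\}$ are injective and send non-identity elements to non-identity elements. Applying Proposition \ref{prop:prop7bis} to $G_n$ gives a free basis $\BB_n=\{[\overline{a},\overline{b}]\mid \overline{a}\in\overline{A}_n,\overline{b}\in\overline{B}_n;\overline{a},\overline{b}\ne 1\}$ of $D_n$, and the injectivity above forces $\pi_n(\BB_\tau)\subset\BB_n$ as a subset of size $|\BB_\tau|$. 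Since any subfamily of a free basis freely generates the subgroup it generates, $\pi_n(\BB_\tau)$ is itself a free basis of $\pi_n(D_\tau)$, and the restriction $\pi_n|_{D_\tau}:D_\tau\to\pi_n(D_\tau)$ is a group isomorphism preserving word length with respect to the respective bases:
\[\ell_{\text{Comb},\BB_\tau}(g)=\ell_{\text{Comb},\BB_n}(\pi_n(g)).\]

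For the lower bound, Proposition \ref{prop:prop7bis} applied to $G_n$ furnishes universal constants $C_0,\lambda_0$ (independent of $n$, as emphasized in that proposition) with $\ell_{\text{Comb},\BB_n}(\overline{g})\le C_0\cdot d_{T_n}(\overline{g}\cdot\overline{A},\overline{A})+\lambda_0$ on $D_n$, where $T_n$ is the Bass-Serre tree of $G_n$. Using Proposition \ref{prop:prop4} to replace $\ell$ by a coarsely equivalent word length from finite generating sets $S_A\subset A$, $S_B\subset B$, each generator displaces the base vertex $\overline{A}\in T_n$ by at most $2$, yielding $d_{T_n}(\pi_n(g)\cdot\overline{A},\overline{A})\le 2\overline{\ell}(\pi_n(g))$. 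Combined with the length identity above, this gives
\[\ell_{\text{Comb},\BB_\tau}(g)=\ell_{\text{Comb},\BB_n}(\pi_n(g))\le 2C_0\cdot\overline{\ell}(\pi_n(g))+\lambda_0,\]
completing the quasi-isometry bound with constants depending only on $\tau$. The equivariance is immediate: $\pi_n$ is a group homomorphism, so it intertwines the left-translation actions of $D_\tau$ on source and target (the action discussed in the paragraph preceding the lemma).

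The main obstacle is the basis identification step, which rests on the interplay between finiteness of $\BB_\tau$ (from properness of $\ell$), faithfulness of the defining quotient sequences, and the structural fact that subfamilies of a free basis remain free bases. The universality of the constants $C_0,\lambda_0$ in Proposition \ref{prop:prop7bis} is what lets the final $(C,\lambda)$ depend on $\tau$ alone and not on $n$.
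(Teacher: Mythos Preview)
Your proof is correct and follows essentially the same route as the paper: both arguments reduce to a word metric on $G$, use faithfulness of the quotient sequences to identify $\pi_n(\BB_\tau)$ with a subfamily of the free basis $\BB_n$ of $D_n$ (hence preserving combinatorial word length), obtain the upper bound $\overline{\ell}\le 4\tau\,\ell_{\text{Comb},\BB_\tau}$ by summing generator lengths, and obtain the lower bound by applying the universal tree quasi-isometry of Proposition~\ref{prop:prop7bis} to $G_n$ together with the elementary displacement estimate for generators on the Bass-Serre tree. The only cosmetic differences are that you spell out the basis-identification step more explicitly via injectivity on the finite letter sets, and you keep track of the factor $2$ in the tree displacement bound that the paper suppresses.
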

\begin{proof}
Any proper metric on a discrete countable group are equivalent, we may assume $\ell$ is a word metric associated to the union of two finite generating sets of respectively $A$ and $B$.\\
Since the sequence of quotients $(G_n)_n$ is faithful, there exists a rank $N\ge0$ such that $\BB_\tau^{(2)}=\{x.y^{-1}\mid x,y\in \BB_\tau\}$ is sent bijectively inside of $D_n$ for $n\ge N$.
Because of the freeness of the set $\pi_n(\BB)\subset G_n$ we have that $\ell_{\text{Comb}, \pi_n(\BB)}(\overline{g})=\ell_{\text{Comb},\BB_\tau}(g)$ for all $g\in D_\tau$ and $n\ge N$ where $\ell_{\text{Comb}, \pi_n(\BB)}$ is the word distance associated to $\pi_n(\BB)\subset G_n$ and $\overline{g}=\pi_n(g)$.\\
Let $T_n$ be the Bass-Serre tree of $G_n$ and $d_n$ its graph metric. 
Then $d_n(\overline{g}.\overline{A}_n,\overline{A}_n)\le \overline{\ell}(\overline{g})$ for all $\overline{g}\in G_n$ and by the inequality (\ref{eq:eq3}) of Proposition \ref{prop:prop7bis} we have:
$$\ell_{\text{Comb}, \pi_n(\BB)}(\overline{g})=\ell_{\text{Comb},\BB_\tau}(g)\le d_n(\overline{g}.\overline{A}_n, \overline{A}_n)+1\le \overline{\ell}(\overline{g})+1$$
for all $g\in D_\tau$.

Now, let $g\in D_\tau$, then ${g}$ can be written ${g}={s_{i_1}}\dots {s_{i_n}}$ with ${s_{i_k}}=[{a_{i_k}},{b_{i_k}}]\in\BB_\tau$ and $n=\ell_{\text{Comb},\BB_\tau}({g})$ in an unique way, so :
$$\overline{\ell}(\overline{g})\le\sum_{k=1}^n\ell(\overline{s_{i_k}})\le \sum_{k=1}^n2\ell(\overline{a_{i_k}})+2\ell(\overline{b_{i_k}})\le 4\tau n=4\tau\ell_{\text{Comb},\BB_\tau}(g).$$

We just proved that:
$$\ell_{\text{Comb},\BB_\tau}({g})-1\le\overline{\ell}(\overline{g})\le4\tau\ell_{\text{Comb},\BB_\tau}({g})$$
for any $g\in D_\tau$, with quasi-isometry constants which depend only on $\tau$.
\end{proof}

\begin{Rem}\label{rem:rem1}
It is interesting to note that the dilatation on all $D$, $\rho_+(t)=\sup_{\ell_{\text{Comb}}({g})\le t}\ell({g})$, depends on ${A}$ and ${B}$. Indeed, a simple computation shows that $\text{diam(${A}$)}+\text{diam(${B}$)}\le\rho_+(1)$.
This justifies what we discussed at the beginning of this subsection and our approach.\\
\end{Rem}

\begin{Cor}\label{coro5}
Let $X_\tau$ be the coarse disjoint union of the sequence of metric spaces $((\pi_n(D_\tau)/{M_{(n,k)}},\overline{\ell}))_{n,k\ge n}$ where $\overline{\ell}$ is the quotient metric induces by a fixed proper length $\ell$ on $G$.
Then $X_\tau$ is quasi-isometric to coarse disjoint union $X_{\tau,\text{Comb}}$ of the sequence $((\pi_n(D_\tau)/{M_{(n,k)}},\overline{\ell}_{\text{Comb},\BB_\tau}))_{n,k\ge n}$ where $\overline{\ell}_{\text{Comb},\BB_\tau}$ is the quotient length associated to the word distance on $\BB_\tau$.\\
In particular the uniform embedding of $X_\tau$ reduces to the uniform embedding of $X_{\tau,\text{Comb}}$.
\end{Cor}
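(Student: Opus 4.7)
The strategy is to lift the fiberwise quasi-isometry supplied by Lemma \ref{cor:cor3} through the further quotients by the $M_{(n,k)}$ and then check that fiberwise quasi-isometries with constants uniform in $(n,k)$ automatically assemble into a quasi-isometry of the coarse disjoint unions, at which point uniform embedding transfers for free. Lemma \ref{cor:cor3} already gives an integer $N=N(\tau)$ and constants $(C,\lambda)=(C(\tau),\lambda(\tau))$ such that for every $n\ge N$ the map $\pi_n\colon(D_\tau,\ell_{\text{Comb},\BB_\tau})\to(\pi_n(D_\tau),\overline{\ell})$ is a $G$-equivariant $(C,\lambda)$-quasi-isometry.

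The key routine step is to push these estimates through the quotient by $M_{(n,k)}$. One verifies by a short direct argument that if two left-invariant proper lengths $\ell_1,\ell_2$ on a group $H$ satisfy $\rho_-(\ell_1(h))\le \ell_2(h)\le \rho_+(\ell_1(h))$ with $\rho_\pm$ monotone, and if $K\trianglelefteq H$, then the induced quotient lengths on $H/K$ satisfy the same inequalities: the upper bound follows from monotonicity applied to an infimum, and the lower bound follows by picking an approximate minimiser of $\overline{\ell}_2$ and using that $\ell_1\ge \overline{\ell}_1$ on the fiber. Applied to $H=D_\tau$ with $K$ the kernel of the composite $D_\tau\to \pi_n(D_\tau)\to \pi_n(D_\tau)/M_{(n,k)}$, this turns the $(C,\lambda)$-quasi-isometry of Lemma \ref{cor:cor3} into a $(C,\lambda)$-quasi-isometry between the two quotient metrics on $\pi_n(D_\tau)/M_{(n,k)}$, with constants still depending only on $\tau$, for every $n\ge N(\tau)$ and every $k\ge n$.

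For the finitely many exceptional indices $n<N(\tau)$ I would argue separately: on the fixed finitely generated group $\pi_n(D_\tau)$ the restriction of $\overline{\ell}$ and the pushforward of $\ell_{\text{Comb},\BB_\tau}$ are two proper length functions, hence coarsely equivalent by Proposition \ref{prop:prop4}, with compression/dilation $\rho_\pm^{(n)}$ depending only on $n$. Applying Proposition \ref{prop:prop5} to the geometric action of $\pi_n(D_\tau)$ on itself, with the sequence of finite-index normal subgroups $(M_{(n,k)}\cap \pi_n(D_\tau))_{k\ge n}$, yields the same coarse equivalence on each quotient $\pi_n(D_\tau)/M_{(n,k)}$ with constants independent of $k$. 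Taking the pointwise maximum of these finitely many $\rho_\pm^{(n)}$ against the universal $(C,\lambda)$-estimates from the previous paragraph produces proper functions $\widetilde{\rho}_\pm$ controlling the identity between the two metrics uniformly over all pairs $(n,k)$ with $k\ge n$.

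Finally, because the identity maps on the fibers respect the projection $p$ of Definition \ref{def:coarse} and are fiberwise quasi-isometries with the uniform controls $\widetilde{\rho}_\pm$, they assemble into a bijective quasi-isometry $X_\tau\to X_{\tau,\text{Comb}}$; since coarse embeddability into Hilbert space is quasi-isometry invariant, the uniform embedding of $X_\tau$ reduces to that of $X_{\tau,\text{Comb}}$. The only genuine difficulty is securing uniform constants across the full family of quotients: Lemma \ref{cor:cor3} handles cofinitely many $n$ with constants depending only on $\tau$, and the finite exceptional range is absorbed by Proposition \ref{prop:prop5} applied to the fixed groups $\pi_n(D_\tau)$.
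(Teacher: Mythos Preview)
Your proof is correct and follows the same approach as the paper, which simply records the corollary as ``a direct consequence of Lemma~\ref{cor:cor3} and Proposition~\ref{prop:prop5}.'' You have spelled out in detail what that citation means---in particular the passage to quotients with uniform constants and the separate handling of the finitely many exceptional indices $n<N(\tau)$---whereas the paper leaves these routine verifications implicit.
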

\begin{proof}
This is a direct consequence of Lemma \ref{cor:cor3} and Proposition \ref{prop:prop5}.
\end{proof}

Since the commutator subgroups $D_n\unlhd G_n=\overline{A}_n\star\overline{B}_n$ are finite rank free groups and , for $n$ large enough, the restriction of the quotient length associated to the word distance on $\BB$ of $D$, $\overline{\ell}_{\text{Comb},\BB}$, to $D_\tau$ is nothing but the word distance on the basis $\BB_\tau$. 
The following proposition ensures the existence of a suitable sequence of finite index normal subgroups $(M_{(n,k)})_{(n,k)}$ such that the $X_{\tau,\text{Comb}}$ uniformly embed.
This concludes the proof of Theorem \ref{thm:thm1}.

\begin{Prop}\label{prop:prop10}
For all free group $\F_\oo$ of finite rank $\oo\in\N$ endowed with a word metric on a basis, there exists a sequence of finite index characteristic  subgroups of $\F_\oo$ with trivial intersection $\NN_\oo=(N_{(\oo,k)})_k$ with $N_{(\oo,k+1)}$ characteristic into $N_{(\oo,k)}$ such that the sequence $(\F_\oo/{N_{(\oo,k)}})_{\oo, k\ge\oo}$ embeds uniformly into Hilbert space.
\end{Prop}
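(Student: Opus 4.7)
\emph{Proof plan.} The plan is to adapt the Arzhantseva--Guentner--\v{S}pakula construction \cite{AGS}, which handled the rank-$2$ case, to all finite ranks simultaneously, and then to assemble the resulting maps via the kernel criterion (3) of Lemma \ref{lem:lem1}. The low-rank cases $\oo\in\{0,1\}$ are trivial: $\F_\oo$ is amenable, so by Lemma \ref{prop:prop11} any sequence of finite-index characteristic subgroups with trivial intersection produces a box space with property (A), which coarsely embeds into Hilbert space.

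For $\oo\ge 2$, I would replay the AGS construction inside $\F_\oo$: choose a descending sequence of finite-index characteristic subgroups $(N_{(\oo,k)})_k$ with $N_{(\oo,k+1)}$ characteristic in $N_{(\oo,k)}$ and $\bigcap_k N_{(\oo,k)}=\{1\}$, along which the girth $g_{(\oo,k)}$ of the Cayley graph of $\F_\oo/N_{(\oo,k)}$ tends to infinity. The AGS arguments go through without essential change since they only exploit the fact that the Cayley graph of $\F_\oo$ is a regular tree; the Hilbert-space valued ``wall" or signed-cocycle maps used there, together with their compression and dilation estimates, are controlled purely by the local girth and the word radius, not by the rank of the tree. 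This yields, for each $\oo$, a coarse embedding of the box space $\Box_{\NN_\oo}\F_\oo$ into a Hilbert space $\HH_\oo$.

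Next I perform a diagonal extraction to make the embeddings uniform across ranks. By passing to subsequences (still characteristic, still with trivial intersection, still nested characteristically), I would arrange that $g_{(\oo,k)}\ge k$ for every $k\ge\oo$. Translating the AGS embeddings into the language of Lemma \ref{lem:lem1}(3), for each $R>0$ and $\e>0$ I obtain maps $\varphi_{(\oo,k)}^{R,\e}:\F_\oo/N_{(\oo,k)}\to\HH^{R,\e}$ into the unit sphere of a Hilbert space satisfying conditions (i) and (ii), with the threshold on $k$ required for these conditions depending only on $R$, $\e$, and the girth bound, not on $\oo$. Applying the criterion to the coarse disjoint union $\sqcup_{\oo,\,k\ge\oo}\F_\oo/N_{(\oo,k)}$ yields the required uniform coarse embedding into Hilbert space.

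The main obstacle is the quantitative verification that the AGS compression and dilation functions are rank-independent once the girth is prescribed. This amounts to inspecting their construction of the Hilbert-space valued cocycle on $\F_\oo/N_{(\oo,k)}$ and checking that the relevant $\ell^2$ estimates depend on the local tree neighbourhood only through its radius (i.e.\ through the girth), not through its branching degree $2\oo$. Once this rank-uniform control is secured, the diagonal reindexing $k\ge\oo$ and the assembly via Lemma \ref{lem:lem1}(3) are routine, and Proposition \ref{prop:prop10} follows.
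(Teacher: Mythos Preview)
Your proposal is correct and takes essentially the same approach as the paper: use the AGS wall-space construction in each $\F_\oo$, secure a rank-uniform girth bound, and observe that the AGS compression/dilation estimates depend only on the girth and not on the branching degree. The only difference is one of execution: rather than a diagonal extraction, the paper fixes from the start the explicit sequence $N_{(\oo,k+1)}=\langle w^2:w\in N_{(\oo,k)}\rangle$ and proves directly via Proposition~3.2 of \cite{LS} that $\mathrm{girth}(\F_\oo/N_{(\oo,k)})\ge k$ for every rank $\oo$, after which the rank-free relation $d_{W,k}^\oo=d^\oo$ below the girth scale (together with $d_{W,k}^\oo\le d^\oo$ globally) yields the uniform embedding immediately.
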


Indeed, for all $n$, there exists $\NN_n=(N_{n,k})_k$ such that $X_{\text{Comb}}=\sqcup_{n,k\ge n}D_n/{N_{n,k}}$ embeds into Hilbert space.
Since $X_{\tau,\text{Comb}}\subset X_{\text{Comb}}$ for all $\tau>0$, the Proposition \ref{prop:prop8} concludes the proof of Theorem \ref{thm:thm1}.\\

Let us prove the Proposition \ref{prop:prop10}:
\begin{proof}
Let us recall that a characteristic subgroup $N$ of a group $G$ is a normal subgroup stable by all automorphisms of $G$.\\
Let $\F_\oo$ be a free group of finite rank $\oo\in\N$, following \cite{AGS} idea on the existence of a free group box space which coarsely embeds, we consider the sequence of characteristic subgroups  $(N_k)_k$ of $\F_\oo$,  define recursively by the formula $N_1=\langle\{w^2;w\in\F_\oo\}\rangle$, $N_{k+1}=\langle\{w^2;w\in N_{k+1}\}\rangle\unlhd N_k$. Since $N_{k+1}$ is a finite index subgroup of $N_k$ as far as the rank of $N_k$ is finite, by induction we deduce that these groups has finite index in $\F_\oo$. 
Moreover Levi theorem (cf \cite{LS}) justifies that this sequence has trivial intersection.\\
The AGS \cite{AGS} embedding result rests on a construction of a pseudo-metric $d_{W,k}$ on $X_k^\oo=\F_\oo/{N_k}$ associated to a wall structure which agrees with the combinatorial metric $d^\oo_k$ at the scale of girth($X_k^\oo$). 
Note that such a pseudo-metric always corresponds to a Hilbert metric \cite{Roe}.\\
In details, on any finite quotient $X_k^\oo$, $d_{W,k}^\oo\le d^\oo$ and :
$$d^\oo(x,y)\le\text{girth($X_k^\oo$)} \text{ if and only if } d_{W,k}^\oo(x,y)\le\text{girth($X_k^\oo$)}$$
for any $x,y\in X_k^\oo$. Furthermore, if one of the above inequalities hold, then $d^\oo(x,y)=d_{W,k}^\oo(x,y)$ \cite{AGS}.

So to prove our proposition a sufficient condition is to prove that girth($X_k^\oo$) grows uniformly on the rank $\oo$, i.e there exists a proper function $f:\N\rightarrow\N$ such that $\text{girth($X_k^\oo$)}\ge f(k)$ for all $\oo\ge 1$ and $k\ge\oo$. 
Indeed, take in account the relations between $d_W^\oo$ and $d^\oo$ described above, we just need to prove the existence of a proper function $\rho_-:\mathbb{R}_+\rightarrow\mathbb{R}_+$ such that:
\begin{equation}
\rho_-(d_\oo(x,y))\le d_W^\oo(x,y)
\end{equation}
for any $\oo$, $k\ge\oo$ and $x, y\in X_k^\oo$.\\
We must prove that $\rho(t)=\inf_{\{(x,y);d_\oo(x,y)\ge t\}}d_{W,k}^\oo(x,y)$ is proper, i.e for any $R>0$, there exists $R'>0$ such that for any $\oo$, $d_{W,k}^\oo(x,y)\le R$ implies $d_\oo(x,y)\le R'$.\\
If such a function $f$ exists. 
Let $K_0$ such that $f(k)>R$ for any $k\ge K_0$ and $R'\ge \max_{\oo\le K_0}\text{diam($\sqcup_{K_0\ge k\ge\oo}X_k^\oo$)}+R$.
We have that for any $x,y\in X_k^\oo$ with $k\ge\oo$, if $k\le K_0$ by construction $d(x,y)\le \max_{\oo\le K_0}\text{diam($\sqcup_{K_0\ge k\ge\oo}X_k^\oo$)}\le R'$ otherwise $k\ge K_0$ and $\text{girth($X_k^\oo$)}\ge f(k)\ge R$ so $d_W(x,y)=d(x,y)\le R\le R'$ as required.

Now the existence of $f$ follows from the fact that the sequence $(N_{(\oo,k)})_k$ is characteristic and $N_{(\oo,k+1)}$ contains no basis element of $N_{(\oo,k)}$.
Indeed, automorphisms act transitively on basis, so by contradiction this will implies that $N_{(\oo,k+1)}=N_{(\oo,k)}$ which is absurd because the sequence is defined recursively and has trivial intersection. 
Proposition 3.2 \cite{LS} ensures that relatively to any basis of $\F_\oo$, $\ell_{\text{Comb},\oo}(g)\ge k$ for any $g\in N_k^\oo$ and so $\text{Girth($X_k^\oo$)}\ge k$.
\end{proof}

\section{Further on finite quotients of free products}\label{sec:sec5}
In this section the groups $A$ and $B$ we consider are supposed to be finitely generated.\\
We start this section by presenting another construction of finite quotients of a free product due to Baumslag \cite{BT} that is extensively use here.\\
First, assume $A$ and $B$ are two finite groups and $G=A\star B$ their free product.
Let $U(k)\subset G$ be the set of words with normal form: $g_l\dots g_1$ with $l\le k$.
For $k\ge 2$, $U(k)$ decomposes as follows:
$$U(k)=U(k-1)\cup (V(k)\cap A.U(k-1))\cup(V(k)\cap B.U(k-1))$$
with $V(k)=U(k)\setminus U(k-1)$ the set of words with normal form: $g_k\dots g_1$.\\
The action by left multiplication of $A$ on $G$ stabilizes $U(k-1)\cup (V(k)\cap A.U(k-1))$ and can be extend to all $U(k)$ by assuming that $A$ acts trivially on $V(k)\cap B.U(k-1)$.
Something similar can be achieved with $B$ by replacing $V(k)\cap A.U(k-1)$ by $V(k)\cap B.U(k-1)$.\\
This gives two symmetric representations $\sigma_A(k): A\rightarrow Sym(U(k))$ and $\sigma_B(k): B\rightarrow Sym(U(k))$ and by the universal property of free product we obtain: $\sigma(k)=\sigma_A(k)\star\sigma_B(k) :G\rightarrow Sym(U(k))$.
Because $A$ and $B$ are finite $U(k)$ is also finite, in particular $N_k=\ker(\sigma(k))$ has finite index in $G$.

\begin{Rem}
The sequence of finite index normal subgroups $(N_k)_k$ has trivial intersection.
Indeed, let $g=g_{n_0}\dots g_1$ be the reduced form of $g\in G$ with $g\neq e$. Then $\sigma(k)(g)(e)=g\neq e$ for any $n\ge n_0$.
\end{Rem}

Now assume $A$ and $B$ are both residually finite groups. Let $(\overline{A}_n)_n$ and $(\overline{B}_n)_n$ be two faithful sequence of finite quotients of respectively $A$ and $B$. 
Let's denote $G_n=\overline{A}_n\star\overline{B}_n$ and $\sigma_n(k): G_n\rightarrow Sym(U_n(k))$ the representation built previously applied to the free product of finite groups $G_n$. Then the subgroups $N_{(n,k)}=\ker(\sigma_n(k)\circ\pi_n)$ of $G$ where $\pi_n:G\rightarrow G_n$ is the quotient map, form a sequence of finite index normal subgroups with trivial intersection.

\subsection{Free product box spaces and expansion property}
Here we study expansion property of the sequence $(G/{N_{(n,k)}})_{n,k\ge n}$.\\

Let $X=(V,E)$ be a finite graph and $A\subset V$ a subset of vertices.
The \textbf{edge boundary} $\partial A\subset E$ of $A$ is the set of edges which relate an element $A$ to one of $V\setminus A$.
\begin{Def}\label{def:def12}
Given a finite graph $X=(V,E)$, the \textbf{Cheeger constant} $h(X)$ is defined as $\min_{A\subset V\mid |A|\le\frac{|V|}{2}}\frac{|\partial A|}{|A|}$.
\end{Def}
A sequence of finite graphs $(X_n)_n$ is called \textbf{expander} if there exists $c>0$ such that $h(X_n)\ge c$ for any $n$.\\

Let $G$ be a group which acts on a set $Z$ and $S$ a generating set of $G$.
We call \textbf{Schreir graph} and denote it $X_S(Z)$ the graph defined by: $V(X_S(Z))=Z$ and $(z,z')\in E(X_S(Z))$ if $z'=s.z$ with $s\in S$.\\
Given a sequence of finite quotients $(Q_n)_n$ of $G$, we construct a sequence of Schreir graphs $X_S(Q_n)$.
Expansion of $(X_S(Q_n))_n$ does not depends on $S$ and we say that $(Q_n)_n$ is expander if such a $S$ exists.

Proposition \ref{prop:prop3} is a consequence of the following:
\begin{Prop}\label{prop:prop14} 
Let $A$ and $B$ be two residually finite groups and $G=A\star B$ their free product.
The faithful sequence of finite quotients $(G_n/{N_{n,k}})_{(n,k\ge n)}$ of $G$ is non-expander.
\end{Prop}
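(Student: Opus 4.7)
The plan is to transfer the expansion question to the Bass-Serre tree quotient and exhibit small cuts using the translation-length bound that the Baumslag construction forces on $N_{n,k}$.

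First, since $\sigma_n(k)(g)(e)=g\neq e$ for every nontrivial $g\in\overline{A}_n\cup\overline{B}_n\subset U_n(k)$, one has $N_{n,k}\cap(\overline{A}_n\cup\overline{B}_n)=\{e\}$. Therefore $N_{n,k}$ acts freely on the Bass-Serre tree $T_n$ of $G_n=\overline{A}_n\star\overline{B}_n$, and the finite graph $T_n/N_{n,k}$ is bipartite $(|\overline{A}_n|,|\overline{B}_n|)$-biregular with edge set canonically parametrised by $G_n/N_{n,k}$. The Cayley graph of $G_n/N_{n,k}$ with generating set $\overline{S}_n=(\overline{A}_n\cup\overline{B}_n)\setminus\{e\}$ then coincides (up to left/right conventions) with the line graph of $T_n/N_{n,k}$: two group elements are joined by a generator iff they lie in a common coset of $\overline{A}_n$ or $\overline{B}_n$, iff the corresponding edges of $T_n/N_{n,k}$ share a vertex. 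Since taking line graphs of graphs of uniformly bounded degree preserves non-expansion, it is enough to prove that $(T_n/N_{n,k})_{n,k\geq n}$ is not an expander family.

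Next I would use the condition $N_{n,k}\cap U_n(k)=\{e\}$ (already established in the proof of Proposition \ref{prop:prop8bis}) to observe that every nontrivial $h\in N_{n,k}$ has $G_n$-word length greater than $k$, hence translation length greater than $k$ on $T_n$. This forces the girth of $T_n/N_{n,k}$ to grow at least linearly in $k$, and ensures that the projection $\pi_{n,k}\colon T_n\to T_n/N_{n,k}$ restricts to an isomorphism on the combinatorial ball $B_{T_n}(v_0,\lfloor k/2\rfloor)$ around any basepoint vertex $v_0$.

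Within this injective ball, picking an edge $e_0$ incident to $v_0$ and letting $H_0\subset T_n$ be one of the two half-trees that $e_0$ cuts off produces a candidate subset $X_{n,k}=\pi_{n,k}(H_0\cap B_{T_n}(v_0,\lfloor k/2\rfloor))\subset V(T_n/N_{n,k})$ whose only boundary edge from within the injective region is $\pi_{n,k}(e_0)$. Any further boundary edge must be created by an identification coming from an $N_{n,k}$-element of length close to $k$, which are sparse by the girth bound. The plan is then to balance $X_{n,k}$ to roughly half of $V(T_n/N_{n,k})$ either by iterating the construction along a geodesic inside the injective ball, or by averaging over the edge-transitive $G_n/N_{n,k}$-action on $T_n/N_{n,k}$, producing a properly balanced cut whose isoperimetric ratio tends to zero.

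The hard part will be this last balancing step: the half-tree image $X_{n,k}$ grows exponentially in the radius $\lfloor k/2\rfloor$, but $|V(T_n/N_{n,k})|$ typically grows much faster, so the natural candidate is dwarfed by the ambient graph and cannot be used as half of it directly. Overcoming this requires either a careful diagonal choice of $(n,k)$ along which the injective ball exhausts a constant fraction of $V(T_n/N_{n,k})$, or a spectral/thresholding argument, in which the one-sided small cut yields a small non-trivial adjacency eigenvalue via a discrete Cheeger inequality and thresholding of the corresponding eigenvector produces a balanced cut of the same asymptotic efficiency. Combined with the line-graph reduction of the first step, this will give that the Cheeger constants of $\mathrm{Cay}(G_n/N_{n,k},\overline{S}_n)$ tend to zero along a subsequence, concluding the proof.
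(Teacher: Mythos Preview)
Your proposal has a genuine gap, and it is not where you think it is. The set $X_{n,k}=\pi_{n,k}\bigl(H_0\cap B_{T_n}(v_0,\lfloor k/2\rfloor)\bigr)$ does \emph{not} have essentially one boundary edge. Inside $T_n$ the subtree $H_0\cap B_{T_n}(v_0,r)$ has, besides the edge $e_0$, all the edges emanating outward from its sphere of radius $r=\lfloor k/2\rfloor$; there are exponentially many such edges, of the same order as $|X_{n,k}|$ itself, because in a regular tree the sphere has size comparable to the ball. In the quotient these outward edges land outside the injective region and you have no control over whether they return into $X_{n,k}$ or not. So $|\partial X_{n,k}|/|X_{n,k}|$ is not shown to tend to zero, and neither the ``diagonal choice'' nor the ``spectral thresholding'' fix addresses this: you never produced a set with small isoperimetric ratio in the first place. (Incidentally, the balancing you worry about is a non-issue: the Cheeger constant only requires $|X|\le |V|/2$, not $|X|\approx|V|/2$.) A secondary problem is that your line-graph identification uses the generating set $\overline{A}_n\cup\overline{B}_n$, whose size is unbounded in $n$ when $A$ or $B$ is infinite, so ``uniformly bounded degree'' fails and the line-graph transfer of non-expansion is not justified.

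The paper's route is far more direct and avoids the Bass--Serre tree entirely. One first proves the monotonicity lemma $h(\Gamma,S)\le h(\Gamma/H,\overline{S})$ for any subgroup $H$ of a finite group $\Gamma$, and applies it with $\Gamma=G_n/N_{n,k}$ and $H=\mathrm{St}(e)$ to reduce to the Schreier graph on $U_n(k)$ itself. There the partition of $U_n(k)\setminus\{e\}$ according to whether the \emph{rightmost} letter $g_1$ of the normal form $g_l\cdots g_1$ lies in $\overline{A}_n$ or in $\overline{B}_n$ is invariant under the (left) $S$-action, so the only boundary edges are those incident to $e$, i.e.\ at most $|S|$. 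Since $|P_n^A(k)|+|P_n^B(k)|=|U_n(k)|-1$, one of the two parts has size at most $|U_n(k)|/2$, and its cardinality grows at least like $(|\overline{A}_n|-1)^{\lfloor k/2\rfloor}(|\overline{B}_n|-1)^{\lfloor k/2\rfloor}$, giving $h(U_n(k))\to 0$. This yields the result without any girth or injectivity-radius argument.
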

In particular we cannot expect a free product with property $(\tau)$ (cf \cite{Lu}).\\

In order to prove Proposition \label{prop:prop14}  we need a intermediate result.
\begin{Lem}\label{lem:lem6}
Let $G$ be a finite group, $S$ a symmetric generating set of $G$ and $H\le G$ a subgroup of $G$.
Then $h(G,S)\le h(G/H,\overline{S})$.
\end{Lem}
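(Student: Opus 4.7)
The plan is to use the natural lift from the coset space back to $G$: given $\overline{A}\subset G/H$ realizing the Cheeger minimum, take $A=\pi^{-1}(\overline{A})$ where $\pi:G\to G/H$ sends $g\mapsto gH$, and show that $A$ is an admissible test set whose edge-to-vertex ratio in $\text{Cay}(G,S)$ equals that of $\overline{A}$ in the Schreier graph $X_{\overline{S}}(G/H)$. This is enough, since then $h(G,S)$ is at most each such ratio, and taking the infimum over $\overline{A}$ gives the claim.

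First, I would set up the vertex count. By Lagrange's theorem $|A|=|H|\cdot|\overline{A}|$ and $|G|=|H|\cdot|G/H|$, so the assumption $|\overline{A}|\le|G/H|/2$ forces $|A|\le|G|/2$, making $A$ a legitimate competitor in the definition of $h(G,S)$. Also $A\neq\emptyset$ since $\overline{A}\neq\emptyset$.

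Next I would handle the boundary. Counting directed $S$-labeled edges, I claim the map
\[
\Phi:\ (g,sg)\ \longmapsto\ (\pi(g),s\pi(g))=(\pi(g),\pi(sg))
\]
restricts to a surjection $\partial A\to\partial\overline{A}$ whose fibers have size $|H|$. Well-definedness: if $g\in A$ and $sg\notin A$, then $\pi(g)\in\overline{A}$ and $s\pi(g)=\pi(sg)\notin\overline{A}$, so the image is a boundary edge of $\overline{A}$. Surjectivity: given $(\overline{g},s\overline{g})\in\partial\overline{A}$, any lift $g\in\pi^{-1}(\overline{g})$ (there are exactly $|H|$) produces a Cayley edge $(g,sg)$ with $g\in A$ and $sg\notin A$, hence in $\partial A$, mapping to the given Schreier edge. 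Injectivity within each fiber is immediate since $g$ and $s$ determine the edge. Therefore $|\partial A|=|H|\cdot|\partial\overline{A}|$, and
\[
\frac{|\partial A|}{|A|}=\frac{|H|\cdot|\partial\overline{A}|}{|H|\cdot|\overline{A}|}=\frac{|\partial\overline{A}|}{|\overline{A}|}.
\]

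Finally, since $A$ is an admissible subset of $G$ in the definition of $h(G,S)$, we get $h(G,S)\le|\partial A|/|A|=|\partial\overline{A}|/|\overline{A}|$. Taking the infimum over all $\overline{A}$ with $|\overline{A}|\le|G/H|/2$ yields $h(G,S)\le h(G/H,\overline{S})$.

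No serious obstacle arises; the only bookkeeping point is to fix a consistent convention for edges (directed or undirected) so that the $|H|$-to-$1$ count of boundary edges matches the $|H|$-to-$1$ count of vertices and the common factor cancels. If one prefers undirected edges or allows multi-edges (when $sg\in gH$ but $s\neq e$), the same $|H|$-to-$1$ correspondence still holds because $\Phi$ respects the $H$-action on fibers uniformly.
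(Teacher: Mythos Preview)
Your proof is correct and follows essentially the same approach as the paper: lift a minimizing set $\overline{A}\subset G/H$ to its full preimage in $G$ and show that both the vertex count and the boundary count scale by $|H|$, so the ratio is preserved. The paper phrases the lift via a choice of coset representatives $X$ with $(X^{-1}X)\cap H=\{e\}$ and then works with $X\cdot H$, but since $X\cdot H=\pi^{-1}(\overline{A})$ this is exactly your set $A$; your direct fiber argument is a cleaner route to the same computation.
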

\begin{proof}
Let $X\subset G$ be a subset. 
Then $|X.H|=|X|.|H|$ if and only if the multiplication map 
$X\times H \rightarrow G$ is bijective and this is equivalent to say that $(X^{-1}.X)\cap H=\{e\}$.\\
Now let $Y\subset G/H$, there exists $X\subset G$ which is sent bijectively to $Y$ by the projection $\pi:G\rightarrow G/H$ such that $X^{-1}.X\cap H=\{e\}$.
Indeed, let's define ${Y}^{(2)}=\{\overline{a^{-1}a'}\mid a,a'\in \tilde{Y}\}$ with $\tilde{Y}\subset G$ a arbitrary set of representative of $Y$, there exists $X_0\subset G$ which is sent bijectively to ${Y}^{(2)}$. In particular $X_0.H\cap X_0=\{e\}$ and the representative of $Y\subset {A'}^{(2)}$ in $X_0$ do the job.\\
It's enough to prove that for any $Y\subset G/H$ with $|Y|\le\frac{|G/H|}{2}$ and a set of representative $X\subset G$ as above:
\begin{equation}\label{eq:eq6}
\frac{|\partial X.H|}{|X.H|}\le\frac{|\partial Y|}{|Y|}.
\end{equation}
Indeed, $|X.H|=|X|.|H|\le\frac{|G/H|}{2}.|H|=\frac{|G|}{2}$.\\
Observe that for a edge $e=\{xh,sxh\}\in E(X.H, G\setminus (X.H))$ between a element of $X.H$ and $G\setminus (X.H)$, the edges $e.h'=\{xhh',sxhh'\}$ with $h'\in H$ are distincts and still belong to $E(X.H, G\setminus (X.H))$. Furthermore they represent the fiber of the edge 
$\overline{e}=\{\overline{x},s.\overline{x}\}\in E(Y, (G/H)\setminus Y)$.
It follows that $|\partial X.H|=|\partial Y|.|H|$. 
Finally the equality $|X.H|=|X|.|H|$ implies inequality \ref{eq:eq6}.
\end{proof}

Lemma \ref{lem:lem6} reduces Proposition \label{prop:prop14} to the study of the expansion property of Schreir graphs $(X_S(U_n(k)))_{n,k\ge n}$.
Indeed, $U_n(k)\simeq {G/N_{(n,k)}}/St(e)$ where $St(e)$ is the stabilizer of $e\in U_n(k)$ .

\begin{Prop}
The sequence of finite homogenous graphs $(X_S(U_n(k)))_{n,k\ge n}$ is non-expander.
\end{Prop}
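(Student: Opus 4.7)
My plan is to exploit the tree-like structure of the free product $G_n=\bar{A}_n\star\bar{B}_n$ to exhibit, in each Schreier graph $X_S(U_n(k))$, a subset with bounded boundary but cardinality growing with $k$. Writing every nontrivial $g\in U_n(k)$ in its unique normal form $g=g_l\cdots g_1$ with alternating syllables, define
\[
T_A=\{g\in U_n(k):g\neq e,\ g_1\in\bar{A}_n\setminus\{e\}\}\quad\text{and}\quad T_B=\{g\in U_n(k):g\neq e,\ g_1\in\bar{B}_n\setminus\{e\}\},
\]
so that $U_n(k)=\{e\}\sqcup T_A\sqcup T_B$; after relabelling, assume $|T_A|\le(|U_n(k)|-1)/2\le|U_n(k)|/2$.

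The key claim is that $|\partial T_A|\le|S|$, independently of $n$ and $k$. Left multiplication by any $s\in S$ preserves the rightmost syllable of a normal form of length $\ge 2$ and cannot reduce such a word to $e$; so the only way an edge $\{g,sg\}$ can leave $T_A$ is through a length-one element $g=a\in\bar{A}_n\setminus\{e\}$. For $s\in S_B$ the product $sa$ is a length-two word still ending in $a$ (or, if $k=1$, the trivial extension of $\sigma_n(k)$ produces a loop), while for $s\in S_A$ the product $sa\in\bar{A}_n$ is either still nontrivial (hence in $T_A$) or equals $e$. Consequently the edges of $\partial T_A$ are exactly the edges $\{e,s\}$ with $s\in S$ whose image in $G_n$ lies in $\bar{A}_n\setminus\{e\}$, of which there are at most $|S_A|\le|S|$. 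Loops contributed by the trivial extension on the outer shell $V(k)$ never cross a subset, so they need no further attention.

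Finally, as soon as $n$ is large enough that both $\bar{A}_n$ and $\bar{B}_n$ are nontrivial---which eventually holds by faithfulness of the sequences $(\bar{A}_n)_n$ and $(\bar{B}_n)_n$---each length $l\in\{1,\ldots,k\}$ admits at least one normal form landing in $T_A$, so $|T_A|\ge k$. Hence
\[
h(X_S(U_n(k)))\le\frac{|\partial T_A|}{|T_A|}\le\frac{|S|}{k}\xrightarrow[k\to\infty]{}0
\]
along the subsequence obtained by fixing any such $n_0$ and letting $k\to\infty$, so no uniform positive lower bound on the Cheeger constants can exist, and the family $(X_S(U_n(k)))_{n,k\ge n}$ is non-expander. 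The only conceptual point is really that $\sigma_n(k)$ is designed to mimic the tree-like Cayley graph of $G_n$ up to depth $k$, and finite balls in a tree-like graph always admit cuts with vanishing Cheeger ratio; there is no substantial obstacle beyond carefully checking that the trivial extension at the boundary of $U_n(k)$ contributes only loops.
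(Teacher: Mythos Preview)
Your proof is correct and follows essentially the same approach as the paper: both define the partition $U_n(k)=\{e\}\sqcup T_A\sqcup T_B$ according to the rightmost syllable, observe that all edges leaving the smaller piece pass through $e$ so that $|\partial T_A|\le|S|$, and conclude that the Cheeger constant tends to zero. The only cosmetic difference is that the paper uses the exponential lower bound $|T_A|\ge(|\bar A_n|-1)^{\lfloor k/2\rfloor}(|\bar B_n|-1)^{\lfloor k/2\rfloor}$ while you use the simpler linear bound $|T_A|\ge k$; either suffices, and your treatment of the boundary (including the loops coming from the trivial extension on the outer shell $V(k)$) is in fact more carefully spelled out than the paper's.
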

\begin{proof}
We take the Schreir graph structure associated to the finite generating $S=S_A\cup S_B$ of $G$ with $S_A$ and $S_B$ two finite generating sets of respectively $A$ and $B$.\\
Let's consider $P_n^A(k)\subset U_n(k)$ (respectively, $P_n^B(k)\subset U_n(k)$) the set of words with reduced form $g_k\dots g_1$ with $k\le n$ and $g_1\in \overline{A}_n$ (respectively, $g_1\in \overline{B}_n$).
Given $w\in U_n(k)$ with $w\neq e$, $w$ belongs to $P_n^A(k)$ or $P_n^B(k)$.
Assume $w\in U_{n,k}^A$, if we denote its set of neighbors by $V(w)=\{s.w\mid s\in S\}$, then $V(w)\subset U_{n,k}^A$.
In particular $\partial P_n^A(k)\subset V(e)$ and it follows that $|\partial P_n^A(k)|\le |S|$.\\
Furthermore we have 
$|P_n^A(k)|\ge (|\overline{A}_n|-1)^{\lfloor n/2 \rfloor}(|\overline{B}_n|-1)^{\lfloor n/2\rfloor}$ which implies:
$$\frac{|\partial P_n^A(k)|}{|P_n^A(k)|}\le\frac{|S|}{(|\overline{A}_n|-1)^{\lfloor n/2 \rfloor}(|\overline{B}_n|-1)^{\lfloor n/2\rfloor}}\rightarrow 0$$
when $n$ or $k$ goes to infinity and similar holds for $P_n^B(k)$.
Since the sets $P_n^A(k)$ and $P_n^B(k)$ verify $|P_n^A(k)|+|P_n^B(k)|=|U_n(k)|+1$, one of them has cardinality lower or equal then $\frac{|U_n(k)|}{2}$, so $h(U_n(k))\rightarrow 0$ whenever $k$ or $n$ go to infinity.
\end{proof}

\subsection{Free product symmetric representations and uniform embedding.}
Let $G$ be a group, we call \textbf{irreducible symmetric representation} of $G$ an action of type: $G\curvearrowright G/N$ with $N\le G$ a finite index subgroup of $G$.
This denomination comes from the fact that these actions give symmetric representations: $G\rightarrow \text{Sym($G/N$)}$ and any transitive action on a finite set are obtained this way.\\
A sequence of irreducible symmetric representations will be for us a sequence of finite index subgroups, non-necessarily normal, in $G$.
Moreover, we say that it is faithful if these subgroups has trivial intersection.\\
Given such a representation $N\le G$ and a generating set $S$ of $G$. 
We denote the Schreir graph $X_S(G/N)$ associated to $N$ and $S$ simply by $X_S(N)$.\\

\noindent\textbf{Proposition 1.} \textit{Let $A$ and $B$ be two finitely generated residually finite groups, $G=A\star B$ their free product and $S\subset G$ a finite generating set of $G$.\\
Assume $A$ and $B$ admit respectively an embeddable box space. Then there exists a faithful sequence of irreducible symmetric representation $(N_n)_n$ of $G$ such that $\sqcup_n(X_S(N_n))_n$ embeds into a Hilbert space.}\\

By using irreducible symmetric representation we relaxed the assumption that the subgroups we consider are normal but these notion still keep track of certain properties as properties $(\tau)$ \cite{Lu} and (A). 
More precisely we can still prove that property (T) (respectively, amenability) implies property $(\tau)$ (respectively, property (A)).\\
In a other hand we lose on the approximation properties correspondance:\\

\noindent\textbf{Proposition 2.} \textit{There exists a faithful sequence of irreducible symmetric representation $(N_n)_n$ of $\F_2$, such that $\sqcup_n \F_2/{N_n}$ has property (A).}\\

These two Proposition's are actually consequences of the following theorem:
Let $A$ and $B$ be two discrete finitely generated residually finite groups.
Let's denote $(\overline{A}_n)_n$ and $(\overline{B}_n)_n$ two faithful sequences of finite quotients of respectively $A$ and $B$ and $G_n=\overline{A}_n\star \overline{B}_n$ their free products.
\begin{Thm}\label{thm:thm6}
If $\sqcup_n\overline{A}_n$ and $\sqcup_n\overline{B}_n$ embed into Hilbert space (respectively, has property (A)), then $\sqcup_n G_n$ embeds into a Hilbert space (respectively, has property (A)).
\end{Thm}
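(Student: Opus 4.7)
The plan is to invoke the extension principle Corollary \ref{cor:cor5} with the disjoint union of Bass--Serre trees of the $G_n$'s serving as the base sequence, thereby reducing the coarse geometry of $\sqcup_n G_n$ to that of the factor sequences $\sqcup_n \overline{A}_n$ and $\sqcup_n \overline{B}_n$. For each $n$, let $T_n$ denote the Bass--Serre tree of $G_n$, so $V(T_n)=G_n/\overline{A}_n\sqcup G_n/\overline{B}_n$ and $E(T_n)=G_n$. The big group $G=A\star B$ acts on each $T_n$ isometrically via $G\twoheadrightarrow G_n$, with two vertex orbits and one edge orbit, so these actions are uniformly cobounded. The map $F_n:G_n\to V(T_n)$, $g\mapsto g\overline{A}_n$, is $G$-equivariant and Lipschitz with constant independent of $n$ (explicitly $2$-Lipschitz when $G_n$ carries the word length with generators in $\overline{A}_n\cup\overline{B}_n$), and can be made a contraction by rescaling $T_n$.

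The base sequence $\YY=(T_n)_n$ satisfies the hypotheses of Corollary \ref{cor:cor5}: each tree embeds isometrically into $\ell^2(E(T_n))$ via the path indicator $v\mapsto\chi_{[y_n,v]}$, and admits partitions of unity witnessing property (A) with controls depending only on $R,\varepsilon$ and not on the tree; hence $\YY$ is equi-exact (and uniformly embeddable). It therefore suffices to check that, with $y_n=\overline{A}_n\in V(T_n)$ fixed and any $R>0$, the sequence of thickened balls $\big(F_n^{-1}(B_{T_n}(y_n,R))\big)_n$ is uniformly embeddable (respectively equi-exact). This is the technical heart of the proof.

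Each thickened ball is a finite union of cosets of $\overline{A}_n$ or $\overline{B}_n$, one per vertex in the $R$-ball of $T_n$ at $y_n$; choosing shortest-reduced-word representatives $g_v$ identifies each fiber with $\overline{A}_n$ or $\overline{B}_n$ in a bi-Lipschitz way with constants depending only on $R$. I would build the embedding as an orthogonal sum $\bigoplus_{v\in B(y_n,R)}\big(\HH_n^{t(v)}\oplus\R\cdot\chi_{[y_n,v]}\big)$, where $\HH_n^{t(v)}$ carries the coarse embedding of $\overline{A}_n$ or $\overline{B}_n$ provided by hypothesis, and the path-indicator component separates cosets by tree distance. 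Combining the compression of the factor-embeddings with the triangle inequality for reduced words in $G_n$ then yields compression and dilation bounded by proper functions of the $G_n$-distance, depending on $R$ but not on $n$. The equi-exact variant follows the same pattern, replacing embeddings by partitions of unity and concluding via Propositions \ref{prop:prop13} and \ref{prop:prop13bis}.

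The main obstacle lies in this last step: since the $R$-ball in $T_n$ has cardinality growing with $|\overline{A}_n|$ and $|\overline{B}_n|$, the statement is not a finite-union-of-embeddable-spaces assertion and must be assembled fiber by fiber. The path-indicator separator is essential to keep the compression proper uniformly in $n$, and the two factor hypotheses enter jointly, through the direct sum indexed by vertices of both types, to control the combined contribution of the cosets.
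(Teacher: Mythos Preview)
Your overall strategy is the same as the paper's: apply Corollary~\ref{cor:cor5} with the Bass--Serre trees $(T_n)_n$ as the equi-exact base (the paper proves this as Proposition~\ref{prop:prop18}) and reduce to embedding the preimages of bounded balls in $T_n$. The paper works in the tree-of-spaces model $X_n$ with projection $p_n:X_n\to T_n$ rather than directly with $F_n:G_n\to T_n$, but this is only a cosmetic difference since $(G_n)_n$ and $(X_n)_n$ are uniformly quasi-isometric.

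The gap is in your treatment of the thickened balls $F_n^{-1}(B_{T_n}(y_n,R))$. Putting the factor-embeddings into orthogonal summands indexed by the vertices $v\in B_{T_n}(y_n,R)$ destroys the dilation control. Concretely, take $a\in\overline{A}_n$ with $\ell(a)$ large and $b\in\overline{B}_n$ a generator; then $x=a\in\overline{A}_n$ and $y=ab\in ab\,\overline{A}_n$ lie in adjacent $\overline{A}_n$-cosets with $d_{G_n}(x,y)=\ell(b)=1$. With shortest representatives $g_{\overline{A}_n}=e$ and $g_{ab\overline{A}_n}=ab$, the fiber coordinates are $g_v^{-1}x=a$ and $g_w^{-1}y=e$, so in your orthogonal sum the image distance is at least $\|\psi_A(a)\|$, which is of order $\rho_+(\ell_A(a))$ and unbounded as $a$ ranges over $\overline{A}_n$. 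The path-indicator component does nothing to fix this, and no choice of coset representatives can be made coherent across all adjacent pairs simultaneously. The phrase ``triangle inequality for reduced words'' does not rescue the argument: the fiber coordinate of a point depends on the chosen representative, not just on the reduced form of the point itself.

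The paper handles this step differently. It proves uniform embeddability (resp.\ equi-exactness) of $C_n(r)=p_n^{-1}(B_{T_n}(\overline{A}_n,r))$ by induction on $r$, using a separator lemma (Proposition~\ref{prop:prop19}). The inductive step exploits that in a tree each vertex space $X_v$ at distance $r$ is attached to $C_n(r-1)$ through a single point; hence after deleting the $L/2$-neighbourhood $C_n'(r-1)$ of $C_n(r-1)$, the remaining pieces $X_v\setminus Y_v$ are $L$-separated. Proposition~\ref{prop:prop19} then glues the embeddable sequence $(C_n'(r-1))_n$ (quasi-isometric to $(C_n(r-1))_n$, handled by induction) to the uniformly embeddable cover $\{X_v\}$ (each isometric to $\overline{A}_n$ or $\overline{B}_n$) via a partition of unity as in Propositions~\ref{prop:prop13} and~\ref{prop:prop13bis}. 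This inductive peeling is precisely what is missing from your sketch; once you have it, your invocation of Corollary~\ref{cor:cor5} finishes the proof exactly as you intended.
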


Let $S=S_A\cup S_B\subset G$ be a finite generating set of $G$ built by the union of a generating set of $A$ and $B$.\\
By considering Baumslag construction, then $U_n(k)\simeq G/{St(e)}$, with the Schreir graph structure induced by $S$, embeds isometrically in $(G_n,\overline{S})$ and is our needed sequence of symmetric representation.
\begin{Prop}\label{prop:prop18}
Let $\TT=\{(T,d_T)\}_T$ be the family of bounded geometry trees endowed with their graph metric. Then $\TT$ forms an equi-exact family of metric spaces.
\end{Prop}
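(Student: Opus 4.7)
I would construct, for each tree $T \in \TT$, a Reiter-type partition of unity whose quality parameters depend only on $R$ and $\varepsilon$ and not on $T$; this is the classical device that upgrades the (uniform) asymptotic dimension at most one of trees to equi-exactness. Fix $R > 0$ and $\varepsilon > 0$, and choose an integer $N \ge 2R/\varepsilon$. For each $T \in \TT$, pick a basepoint $v_T \in T$, which orients $T$ as a rooted tree via the height function $h_T(x) := d_T(x, v_T)$. For each $x \in T$ let $\gamma_x^T$ denote the geodesic segment of length $\min(N, h_T(x))$ starting at $x$ and moving toward $v_T$, and set $\xi_x^T := |\gamma_x^T|^{-1}\mathbf{1}_{\gamma_x^T} \in \ell^1(T)$, a probability measure supported on this initial ancestor-segment of at most $N+1$ vertices.

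The core estimate is that, in the generic case $h_T(x), h_T(y) \ge N$, one has $\|\xi_x^T - \xi_y^T\|_1 \le 2R/(N+1) \le \varepsilon$ whenever $d_T(x, y) \le R$. Denote by $z$ the vertex of maximal height lying on both $[v_T, x]$ and $[v_T, y]$, and set $a := d_T(x, z)$, $b := d_T(y, z)$, so that $a + b = d_T(x, y) \le R \le N$. Both $\gamma_x^T$ and $\gamma_y^T$ contain the ancestors of $z$ at distances $0, 1, \ldots, N - \max(a, b)$ above $z$, while their symmetric difference consists of the $a$ vertices of $\gamma_x^T$ strictly below $z$, the $b$ corresponding vertices of $\gamma_y^T$, and the $|a-b|$ additional ancestors that one of them reaches and the other misses; this totals $a + b + |a - b| = 2\max(a, b) \le 2d_T(x, y)$. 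Since each measure assigns mass $(N+1)^{-1}$ to every vertex of its support, $\|\xi_x^T - \xi_y^T\|_1 \le 2R/(N+1)$ as claimed. Setting $\varphi_t^T(x) := \xi_x^T(t)$ yields a partition of unity on $T$ subordinated to the cover $(U_t^T)_{t \in T}$ with $U_t^T := \{x \in T : t \in \gamma_x^T\} \subset B(t, N)$, a set of diameter at most $2N$; and $\sum_t |\varphi_t^T(x) - \varphi_t^T(y)| = \|\xi_x^T - \xi_y^T\|_1 \le \varepsilon$. Thus Definition~\ref{def:def10} is satisfied with the uniform cover bound $C := 2N$, depending only on $R$ and $\varepsilon$.

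The principal technical obstacle is the degenerate case when $h_T(x) < N$ or $h_T(y) < N$, where $\gamma_x^T$ is truncated so that $|\gamma_x^T| < N + 1$, the normalization is no longer homogeneous, and the symmetric-difference bound must be revisited. I would handle this by a dichotomy: for trees $T$ with $\mathrm{diam}(T) \le 2N$ (necessarily finite), the trivial partition $\varphi_T \equiv 1$ on the single cover set $T$ satisfies the conditions with diameter at most $2N$ and zero variation; for trees with $\mathrm{diam}(T) > 2N$, one can either enlarge $T$ by attaching an auxiliary ray of length $N$ at $v_T$, run the construction in the enlarged tree (where no truncation is needed for vertices of $T$), and verify that restriction to $T$ remains an admissible partition, or directly choose $v_T$ at an end or at the endpoint of a long geodesic so that the affected collar $B(v_T, N)$ is a uniformly bounded region, which is then absorbed by a finite perturbation. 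Either workaround preserves the uniform diameter bound $C = 2N$ and the $\varepsilon$-variation at scale $R$, which completes the proof of equi-exactness of $\TT$.
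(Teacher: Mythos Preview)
Your argument is correct and takes a genuinely different route from the paper's. The paper proceeds via an asymptotic-dimension decomposition: it fixes a basepoint, uses the Gromov product and $0$-hyperbolicity to partition each annulus $\{kL \le |x|_T \le (k+1)L\}$ into pieces of diameter $\le 3L$ that are $L$-separated across annuli, groups even and odd annuli into a multiplicity-two cover, thickens by $L/2$, and then builds a subordinate partition of unity from distance functions, obtaining a Lipschitz bound $40/L$. Your approach is the Reiter/flow method: push uniform mass along the geodesic toward the root and read off $\varphi_t(x)=\xi_x(t)$. It produces the partition of unity in one stroke with the cleaner constant $2/(N+1)$ and bypasses the separate cover-construction step entirely; the cover appears only implicitly as the supports $U_t^T$. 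Both arguments are standard for trees and both extend to $\delta$-hyperbolic spaces (yours is essentially Ozawa's exactness proof, the paper's is the Roe-style finite-asymptotic-dimension argument). The paper's version has the mild advantage of exhibiting the bounded-multiplicity cover explicitly, which matches the literal shape of Definition~\ref{def:def10}; yours is shorter and gives a sharper estimate.

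One small clarification on your boundary workaround: when you ``attach an auxiliary ray of length $N$ at $v_T$'', you must also move the basepoint to the far end of that ray; only then does every vertex of $T$ acquire height $\ge N$ in the enlarged tree, so that the generic estimate applies throughout and the restricted partition (indexed by the enlarged tree) works as claimed. With the root left at $v_T$, heights in $T$ are unchanged and truncation still occurs.
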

\begin{proof}
Let's fix $x_T\in T$ on any $T\in\TT$ and denote $(x|y)_T=\frac{1}{2}[|x|_T+|y|_T-d(x,y)]$ with $|x|_T=d_T(x,x_T)$ with $x,y\in T$.\\
Recall that trees are $0$-hyperbolic space and so satisfy the following hyperbolic condition:
$$(x|z)_T\ge \min \{(x|y)_T,(y|z)_T\}$$
for any $x,y,z\in T$.
In particular given any positive constant $L\ge0$, the relation $x\sim y$ if $(x|y)_T\ge L$ is an equivalence relation on $T_{\ge L}=\{x\in T\mid|x|_T\ge L\}$.\\
Let $L>0$ be a positive constant and $A_{T,L,k}=\{x\in T\mid kL\le|x|_T\le k(L+1)\}$ an annulus of $T_{\ge L}$.
The relation on $A_{T,L,k}\subset T_{\ge L}$: $x\sim y$ if $(x|y)_T\ge L(k-1/2)$ is an equivalence relation on $A_{T,L,k}$. Moreover, $x\sim y$ implies $d_T(x,y)\le 3L$ and $x\nsim y$ implies $d_T(x,y)\ge L$.
This induces a partition $A_{T,L,k}=\cup_{i\in I_{T,L,k}} C_{T,L,k,i}$ such that $d_T(C_{T,L,k,i},C_{T,K,k',i})\ge L$ whenever $k\neq k'$ and $\text{diam($C_{T,K,k,i}$)}\le 3L$.\\
Using that decomposition we construct the covering $\UU=\UU_1\cup\UU_2$ with $\UU_1=\{C_{T,L,2k,i}\mid k\ge0,i\in I_{T,L,2k}\}$ and $\UU_2=\{C_{T,L,2k+1,i}\mid k\ge0,i\in I_{T,L,2k+1}\}$ of $T$ with the property that $\UU_i$ forms a L-separated, i.e for any $C,C'\in \UU_i$, $d(C,C')\ge L$, 3L-uniformly bounded family with $i=1,2$.\\
Let $\WW=\WW_1\cup\WW_2$ with $\WW_i=\UU_i(L/2)=\{C(L/2)\mid C\in\UU_i\}$ which is the covering obtained by taking the $L/2$-neighborhood of each element of $\UU$.
Since at most two elements of $\UU$ intersect a ball of radius $L/2$, at most two elements of $\WW$ contain a given point. Moreover any ball of radius $L/2$ are contained in an element of $\WW$. 
This fact will be use later for estimation.\\
Let's define $\phi_{T,L}: T\rightarrow \ell^1(\cup_{k\in\N} I_{T,L,k})$ by the formula 
$$\phi_{T,L}(x)(i)=\frac{d_T(x,T\setminus W_{T,L,k,i})}{\sum_{k'\in \N, j\in I_{T,L,k}}d_T(x,T\setminus W_{T,L,k',j})}$$
for any $x\in T$ and $i\in I_{T,L,k}$.\\
Then for $k\in\N$ and $i\in I_{T,L,k}$, $\phi_{T,L}(.)(i)$ is supported in $W_{T,L,k,i}$ which has diameter lower or equal to $4L$ and so forms a uniformly bounded supported partition of the unity. 
Moreover: 
\begin{gather*}
|\phi_{T,L}(x)(i)-\phi_{T,L}(y)(i)|\le \frac{d(x,y)}{\sum_{k'\in \N, j\in I_{T,L,k}}d_T(x,T\setminus W_{T,L,k',j})}\\
+d_T(y,T\setminus W_{T,L,k,i})\bigg|\frac{1}{\sum_{k'\in \N, j\in I_{T,L,k}}d_T(x,T\setminus W_{T,L,k',j})}-\frac{1}{\sum_{k'\in \N, j\in I_{T,L,k}}d_T(y,T\setminus W_{T,L,k',j})}\bigg|\\
\le \frac{2d(x,y)}{L}+\frac{2}{L}\bigg(\sum_{k'\in \N, j\in I_{T,L,k}}|d_T(x,T\setminus W_{T,L,k',j})-d_T(y,T\setminus W_{T,L,k',j})|\bigg)\\
\le \frac{10}{L}d(x,y)
\end{gather*}
It follows that:
\begin{gather*}
\|\phi_{T,L}(x)-\phi_{T,L}(y)\|_1\le \sum_{k\in \N, i\in I_{T,L,k}}|\phi_{T,L}(x)(i)-\phi_{T,L}(y)(i)|\\
\le \frac{40}{L}d(x,y).
\end{gather*}
This does not depend on the tree $T$ we consider. By taking $L>0$ large enough we can check the Definition \ref{def:def10}.
\end{proof}

\begin{Prop}\label{prop:prop19}
Let $(X_n)_n$ be a sequence of metric spaces with an uniformly embeddable (respectively, equi-exact) cover $\UU=\{\UU_n\}_n$.
 Assume for any $L>0$ and $n\ge 1$, there exists $Z_n\subset X_n$ a subspace of $X_n$ such that $(Z_n)_n$ is an embeddable sequence and $\{U\setminus Z_n\mid U\in \UU_n\}$ is L-separated in $X_n$ for any $n$.
Then $(X_n)_n$ is uniformly embeddable (respectively, equi-exact).
\end{Prop}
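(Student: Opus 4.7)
The plan is to prove the equi-exact conclusion by constructing, for each $R>0$ and $\varepsilon>0$, a partition of unity on $X_n$ subordinated to a uniformly bounded cover satisfying the sum-difference bound of Definition~\ref{def:def10}, then invoking Proposition~\ref{prop:prop13bis}. The uniformly embeddable conclusion follows from the same partition after observing that its support cover is uniformly embeddable, so that Proposition~\ref{prop:prop13} applies instead. The guiding idea is to interpolate, via a Lipschitz cutoff localized around $Z_n$, between the partition extracted from the equi-exactness (respectively embeddability) of $(Z_n)_n$ on a neighborhood of $Z_n$, and the partitions extracted from $\UU$ on $X_n\setminus Z_n$, where the $L$-separation makes the restricted cover pieces pairwise disjoint.

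First I would set $\varepsilon'=\varepsilon/5$, $D=R/\varepsilon'$, pick any $L>R$, and apply the hypothesis with this $L$ to obtain $Z_n=Z_n^L$ for which $\{U\setminus Z_n:U\in\UU_n\}$ is $L$-separated. The key geometric consequence of $L>R$ is: for $x\in U\setminus Z_n$, any $y\in X_n$ with $d(x,y)\le R$ satisfies either $y\in Z_n$ or $y\in U$, because $y\in U'\setminus Z_n$ with $U'\neq U$ would force $d(x,y)\ge L>R$. Next I would define the cutoff $\mu_n(x)=\max(0,1-d(x,Z_n)/D)$, which equals $1$ on $Z_n$, vanishes outside $Z_n(D)$, and satisfies $|\mu_n(x)-\mu_n(y)|\le R/D=\varepsilon'$ whenever $d(x,y)\le R$. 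Since $(Z_n(D))_n$ is coarsely equivalent to $(Z_n)_n$ with constants depending only on $D$, it inherits equi-exactness (respectively uniform embeddability); this lets me pull back a partition $\{\widetilde\alpha_{n,k}\}_k$ on $Z_n(D)$ subordinated to a uniformly bounded (respectively uniformly embeddable) cover $\{V^Z_{n,k}\}$ with sum-difference at scale $R$ bounded by $\varepsilon'$. Symmetrically, equi-exactness (respectively embeddability) of $\UU$ yields partitions $\{\beta_{U,j}\}_j$ on each $U\in\UU_n$ subordinated to cover pieces $\{V^U_j\}$ with the same bound.

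I would then set
\[
\phi^Z_{n,k}(x)=\mu_n(x)\widetilde\alpha_{n,k}(x),\qquad \phi_{n,U,j}(x)=(1-\mu_n(x))\beta_{U,j}(x)\mathbf{1}_{U\setminus Z_n}(x),
\]
extending $\widetilde\alpha_{n,k}$ by zero outside $Z_n(D)$. Disjointness of $\{U\setminus Z_n\}$ makes the second family well defined, and since $\mu_n\equiv 1$ on $Z_n$ the sum $\sum_k\phi^Z_{n,k}+\sum_{U,j}\phi_{n,U,j}$ is identically $1$. The supports are contained in the union $\{V^Z_{n,k}\}\cup\{V^U_j\}$, which inherits uniform boundedness (respectively uniform embeddability) as the disjoint union of two families with that property.

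The hard part will be verifying the sum-difference bound $\sum|\phi(x)-\phi(y)|\le\varepsilon$ for $d(x,y)\le R$, especially when $x$ and $y$ straddle the $Z_n$-boundary. A short case analysis on the positions of $x,y$ relative to $Z_n$ and $Z_n(D)$ reduces the estimate to three ingredients: the inherited sum-difference bounds on $\widetilde\alpha_{n,k}$ and $\beta_{U,j}$; the Lipschitz estimate on $\mu_n$ that controls the interpolation terms by $\varepsilon'$; and the key geometric observation that forbids $x$ and $y$ from contributing through distinct cover elements simultaneously, thereby killing the potentially dangerous cross-$U$ cross terms. Summing the contributions yields a total of at most $5\varepsilon'=\varepsilon$, which closes the argument.
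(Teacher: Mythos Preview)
Your proposal is correct and follows essentially the same route as the paper. The paper is a bit more economical: it forms the two-piece cover $\WW=\{Z_n(L/2)\}\cup\{(U\setminus Z_n)(L/2):U\in\UU_n\}$, observes (exactly as you do) that the $L$-separation forces bounded multiplicity, builds an $O(L^{-1})$-Lipschitz partition of unity via the distance-to-complement formula of Proposition~\ref{prop:prop18}, and then hands the whole cover to Proposition~\ref{prop:prop13} (resp.\ \ref{prop:prop13bis}); your explicit cutoff $\mu_n$ plays the same role, and your further refinement by the inner partitions $\widetilde\alpha,\beta$ merely anticipates one step of what Proposition~\ref{prop:prop13bis} would do anyway.
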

\begin{proof}
Let $L>0$ be a positive constant and $Z_n\subset X_n$ as in the statement. Then $\WW=\WW_1\cup\WW_2$ with $\WW_1=\{Z_n(L/2)\}$ and $\WW_2=\{(U\setminus Z_n)(L/2)\mid U\in \UU_n\}$ is a cover such that at most two elements of $\WW$ contain a same element of $X_n$ and any ball of radius $L/2$ is contained in a element of $\WW$.
As in the proof of Proposition \ref{prop:prop18} we can build a $O(L^{-1})$-lipschitz partition of unity subordinated to $\WW$ and apply Proposition \ref{prop:prop13} (respectively, Proposition \ref{prop:prop13bis}) to conclude.
\end{proof}

We present a brief introduction to tree of metric spaces formalism (cf. \cite{DMG}) that we use to prove Theorem \ref{thm:thm6}.\\
Let $T=(V,E)$ be an oriented (this is for relieve on notations) connected tree.\\
A tree of metric spaces on $T$ is given by a family of metric spaces $\XX=\{(X_v,d_v)\}_{v\in V}$ and a map $f:E\rightarrow \cup_{v,v'\mid(v,v')\in E}X_v\times X_{v'}$ such that $f(e)\in X_{s(e)}\times X_{t(e)}$.\\
The \textbf{total space} $X$ associated to $(\XX,f)$ is a metric space defined by $X=\sqcup_{v\in V} X_v$ and a metric $d_X$ obtained as follows:\\
Let's consider the subset $\DD=(\cup_{v\in V} X_v\times X_v)\cup f(E)$ of $X\times X$. 
Then $\DD$ defines a ample domain, i.e for any pair of elements $x,y\in X$, there exists a path $x_1=x,\dots,x_n=y$ such that $(x_i,x_{i+1})\in\DD$ for any $1\le i\le n-1$. 
The metric $d_X$ is defined as the unique enveloping metric of the partial metric $\widehat{d}:\DD\rightarrow \R_+$ given by the formulas $\widehat{d}|_{X_v\times X_v}=d_v$ and $\widehat{d}\circ f(e)=1$ for any $e\in E$ (cf \cite{DMG}).\\
Now, given a free product $G=A\star B$ endowed with a proper left invariant metric $d$ and its Bass-Serre tree $T=(G/A\cup G/B, G)$.
We associate a tree of metric spaces by taking $X_{gA}=(gA,d|_{gA})$ and $X_{gB}=(gB,d|_{gB})$ and $f: G\rightarrow G/A\times G/B$ with $f(g)=(gA,gB)$.\\
$G$ acts on the total spaces $X_{(G,d)}$ by the formula $g'.(ga)=gg'a\in X_{g'gA}$ with $g'\in G$ and $ga\in X_{gA}$ and similarly on $G/B$.
Moreover this action is isometric, free and has two orbits.
The formula for the distance on $X_{(G,d)}$ given in Proposition (5.5) of \cite{DMG} implies the existence of a pair of positive constants $(C,\lambda)$ such that any free product $(G,d)$ of finitely generated groups  endowed with a proper left invariant metric $d$ is $(C,\lambda)$-quasi-isometric to $X_{(G,d)}$.
In other words it is equivalent to prove Theorem \ref{thm:thm3} for $(G_n)_n$ or $(X_{(G_n,d_n)})_n$ (cf. Proposition \ref{prop:prop5}).

We can prove Theorem \ref{thm:thm6}.
\begin{proof}
Let $G_n$ as in the statement, $T_n$ its Bass-Serre tree and $X_n$ its tree of metric spaces.
The sequence $(G_n)_n$ are uniformly quasi-isometric to $\XX=(X_n)_n$, so the proof reduce to show that $\XX$ embeds uniformly (respectively, has (A)) if $(\overline{A}_n)_n$ and $(\overline{B}_n)_n$ embed (respectively, has (A)).\\
The actions of $G_n$ on $T_n$ are uniformly cobounded and Proposition \ref{prop:prop18} ensures that $\TT=(T_n)_n$ forms an equi-exact sequence.
Moreover $p_n: X_n\rightarrow T_n$ is a sequence of $G_n$-equivariant contractions (this is justified by Proposition 5.5 of \cite{DMG}).
To apply Corollary \ref{cor:cor5}, we must prove that $p_n^{-1}(B(\overline{A}_n,r))\subset X_n$ (here $\overline{A}_n\in V(T_n)$) embeds uniformly into a Hilbert space (respectively, has (A)) for any $r\ge 0$.\\
Let's do this by induction on $r$.
For $r=0$, $p_n^{-1}(\overline{A}_n)=X_{\overline{A}_n}$ which is isometric to $\overline{A}_n$ for any $n$ and is an embeddable sequence (respectively, equi-exact sequence) by assumption.\\
Assume now $r\ge 1$, if we denote $C_n(r)=p_n^{-1}(B(\overline{A}_n,r))$, we have the following decomposition:
$$C_n(r)=C_n(r-1)\cup\bigcup_{v\in T_n\mid d_{T_n}(\overline{A}_n,v)=r} X_v.$$
Let $L>0$ be positive constant and $Y_v\subset X_v$ be the subspace of $X_v$ forms by elements at distance at most $L/2$ from $C_n(r-1)$ for $v\in T_n$.
Then $C_n'(r-1)=C_n(r-1)\cup\bigcup_{v\in T_n\mid d_{T_n}(\overline{A}_n,v)=r} Y_v$ is quasi-isometric to $C_n(r-1)$ and by induction hypothesis forms an embeddable (respectively, equi-exact) sequence.\\
In the other hand $C_n(r)\setminus C_n'(r-1)=\bigcup_{v\in T_n\mid d_{T_n}(A_n,v)=r} X_v\setminus Y_v$ is L-separated in $C_n(r)$ and Proposition \ref{prop:prop19} implies that $(C_n(r))_n$ embeds uniformly (respectively, is equi-exact).
We can apply Corollary \ref{cor:cor5} to conclude.
\end{proof}

\bibliographystyle{plain}
\bibliography{biblioo}
\end{document}